\theoremstyle{plain}
\newtheorem*{acknowledgment}{Acknowledgment}
\newtheorem{algorithm}{Algorithm}[section]
\newtheorem{corollary}[algorithm]{Corollary}
\newtheorem{definition}[algorithm]{Definition}
\newtheorem{example}[algorithm]{Example}
\newtheorem{addendum}[algorithm]{Addendum}
\newtheorem{lemma}[algorithm]{Lemma}
\newtheorem{theorem} [algorithm] {Theorem}
\newtheorem{keylemma}[algorithm]{Key Lemma}
\newtheorem{proposition}[algorithm]{Proposition}
\newtheorem{remark}[algorithm]{Remark}
\numberwithin{equation}{algorithm}
\tikzset{  >=latex,   inner sep=0pt,  outer sep=2pt,  mark coordinate/.style={inner sep=0pt,outer sep=0pt,minimum size=3pt,
    fill=black,circle}}
\begin{document}
\title{Crosscap Stability}
\author{Curtis Pro}
\address{{Department of Mathematics, University of Notre Dame} }
\email{cpro@nd.edu}
\urladdr{www.nd.edu/$\sim$cpro}
\author{Michael Sill}
\address{{Department of Mathematics, Calbaptist, Riverside} }
\email{msill@calbaptist.edu}
\urladdr{}
\author{Frederick Wilhelm}
\address{{Department of Mathematics, University of California, Riverside}}
\email{fred@math.ucr.edu}
\urladdr{https://sites.google.com/site/frederickhwilhelmjr/home}
\subjclass{53C20}
\keywords{Diffeomorphism Stability, Alexandrov Geometry}

\begin{abstract}
We provide an alternative proof that Crosscaps are diffeomorphically stable.
\end{abstract}

\maketitle

\section{Introduction}

Perelman's celebrated Stability Theorem in particular implies the following.

\bigskip

\noindent \textbf{Topological Stability Theorem:} \emph{Let }$\{M_{\alpha
}\}_{\alpha }$\emph{\ be a sequence of closed Riemannian }$n$\emph{%
--manifolds with sectional curvature }$\geq k.$\emph{\ If the
Gromov-Hausdorff limit of }$\{M_{\alpha }\}_{\alpha }$\emph{\ is }$X$\emph{\
and }$\dim \left( X\right) =n,$\emph{\ then all but finitely many of the }$%
M_{\alpha }$\emph{'s are homeomorphic to }$X.$ \cite{Kap}, \cite{Perel}

\bigskip

Given this, it is at least natural to ask the

\medskip

\noindent \textbf{Diffeomorphism Stability Question.} \emph{Let }$%
\{M_{\alpha }\}_{\alpha }$\emph{\ be a sequence of closed Riemannian }$n$%
\emph{--manifolds with sectional curvature }$\geq k.$\emph{\ If the
Gromov-Hausdorff limit of }$\{M_{\alpha }\}_{\alpha }$\emph{\ is }$X$\emph{\
and }$\dim \left( X\right) =n,$\emph{\ then are all but finitely many of the 
}$M_{\alpha }$\emph{'s diffeomorphic to each other? }\cite{GrovWilh2}

\bigskip

An affirmative answer to the Diffeomorphism Stability Theorem would provide
a generalizations of Cheeger's Finiteness Theorem and the Diameter Sphere
Theorem \cite{Cheeg1}, \cite{Cheeg2}, \cite{GrovShio}, \cite{GrovWilh2}.

\begin{definition}
Let $\mathcal{M}_{k}\left( n\right) $ be the class of closed Riemannian $n$%
--manifolds with sectional curvature $\geq k.$ A compact, $n$--dimensional $%
X\in \mathrm{closure}\left( \mathcal{M}_{k}\left( n\right) \right) $ is
called diffeomorphically stable if for any sequence $\left\{ M_{\alpha
}\right\} _{\alpha =1}^{\infty }\subset \mathcal{M}_{k}\left( n\right) $
with $M_{\alpha }\longrightarrow X,$ in the Gromov--Hausdorff topology, all
but finitely many of the $M_{\alpha }$s are diffeomorphic to each other.
\end{definition}

It follows from Theorem 6.1 in \cite{KMS} that $X$ is diffeomorphically
stable if all of its points are $\left( n,\delta \right) $--strained in the
sense of \cite{BGP}. Examples of such spaces are curvature $k$ crosscaps.

\begin{example}
\label{crosscapexample} \textbf{(Crosscap) }Let\textbf{\ }$\mathcal{D}%
_{k}^{n}\left( r\right) $ be an $r$--ball in the $n$--dimensional, simply
connected space form of constant curvature $k.$\textbf{\ }

The constant curvature $k$ Crosscap, $C_{k,r}^{n},$ is the quotient of $%
\mathcal{D}_{k}^{n}\left( r\right) $ obtained by identifying antipodal
points on the boundary. Thus $C_{k,r}^{n}$ is homeomorphic to $\mathbb{R}%
P^{n}$. There is a canonical metric on $C_{k,r}^{n}$ that makes this
quotient map a submetry. The universal cover of $C_{k,r}^{n}$ is the double
of $\mathcal{D}_{k}^{n}\left( r\right) $. If we write this double as $%
\mathbb{D}_{k}^{n}\left( r\right) :=\mathcal{D}_{k}^{n}\left( r\right)
^{+}\cup _{\partial \mathcal{D}_{k}^{n}(r)^{\pm }}\mathcal{D}_{k}^{n}\left(
r\right) ^{-},$ then the free involution%
\begin{equation*}
A:\mathbb{D}_{k}^{n}\left( r\right) \longrightarrow \mathbb{D}_{k}^{n}\left(
r\right)
\end{equation*}%
that gives the covering map $\mathbb{D}_{k}^{n}\left( r\right)
\longrightarrow C_{k,r}^{n}$ is%
\begin{equation*}
A:\left( x,+\right) \longmapsto \left( -x,-\right) ,
\end{equation*}%
where the sign in the second entry indicates whether the point is in $%
\mathcal{D}_{k}^{n}(r)^{+}$ or $\mathcal{D}_{k}^{n}(r)^{-}.$
\end{example}

Crosscaps are of particular interest since they are one of the three types
of limit spaces with maximal volume in the sense of \cite{GrovPet3}. Here we
provide an alternative proof that Crosscaps are diffeomorphically stable.

\begin{theorem}
\label{Cross Cap Stability}All Crosscaps are diffeomorphically stable.

In other words, let $\left\{ M_{i}\right\} _{i=1}^{\infty }$ be a sequence
of closed Riemannian $n$--manifolds with $\mathrm{sec}$ $M_{i}\geq k$ so
that 
\begin{equation*}
M_{i}\longrightarrow C_{k,r}^{n}
\end{equation*}%
in the Gromov-Hausdorff topology. Then all but finitely many of the $M_{i}$s
are diffeomorphic to $\mathbb{R}P^{n}.$
\end{theorem}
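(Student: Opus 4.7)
The plan is to lift to the universal cover, exploit equivariant Gromov--Hausdorff convergence to identify the covers with $S^n$, and then descend by quotienting out the free $\mathbb{Z}/2$-action. We argue by contradiction: if infinitely many $M_i$ fail to be diffeomorphic to $\mathbb{R}P^n$, restrict attention to such a subsequence. The Topological Stability Theorem gives $M_i \approx \mathbb{R}P^n$ for $i$ large, so $\pi_1(M_i) \cong \mathbb{Z}/2$; let $\widetilde M_i \to M_i$ denote the universal cover and $A_i$ its deck involution. Fukaya's equivariant Gromov--Hausdorff theorem then yields, along a further subsequence,
\begin{equation*}
(\widetilde M_i, A_i) \longrightarrow (\mathbb{D}_k^n(r), A)
\end{equation*}
equivariantly, with $A$ as in Example~\ref{crosscapexample}. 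Freeness of $A$ transfers to freeness of $A_i$ for large $i$, and $\sec \widetilde M_i \geq k$ throughout.

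Next we would analyze the distance function $f := d(p_0,\cdot)$ on $\mathbb{D}_k^n(r)$, where $p_0 = (0,+)$ is the center of $\mathcal{D}_k^n(r)^+$. A direct computation shows that $f$ attains its minimum only at $p_0$, its maximum $2r$ only at $A(p_0) = (0,-)$, and that at every intermediate point there is a uniformly large cone of directions of strict decrease; thus $p_0$ and $A(p_0)$ are the only Alexandrov-critical points of $f$. Choose lifts $\tilde p_i \to p_0$ and set $\tilde q_i := A_i(\tilde p_i) \to A(p_0)$. By stability of noncriticality under $\sec \geq k$ Gromov--Hausdorff convergence, for $i$ large the function $d(\tilde p_i,\cdot)$ on $\widetilde M_i$ has no critical points outside $B(\tilde p_i,\varepsilon) \cup B(\tilde q_i,\varepsilon)$.

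We would then realize $\widetilde M_i$ as a smooth suspension. Small exponential $n$-disks about $\tilde p_i$ and $\tilde q_i$ provide the two hemispheres; an $A_i$-equivariant gradient push, carried out with Petrunin's gradient exponential map applied symmetrically to $d(\tilde p_i,\cdot)$ and $d(\tilde q_i,\cdot)$, glues these disks along an $S^{n-1}$-collar to produce a diffeomorphism
\begin{equation*}
\Phi_i \colon \widetilde M_i \;\longrightarrow\; D^n \cup_{S^{n-1}} D^n \;=\; S^n
\end{equation*}
that conjugates $A_i$ to the antipodal map. Passing to the quotient yields $M_i \cong \mathbb{R}P^n$, contradicting the choice of subsequence.

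The main obstacle is the smooth, $A_i$-equivariant suspension of the previous paragraph. The ``equator'' $\{f = r\}$ in the limit $\mathbb{D}_k^n(r)$ is a crease rather than a smoothly embedded sphere, so we must show that in the nearby Riemannian manifolds $\widetilde M_i$ the corresponding level set is smoothly embedded with an $A_i$-equivariantly standard collar. Propagating the noncritical behaviour of $d(\tilde p_i,\cdot)$ to a smooth, equivariant collar is where the lower sectional curvature bound does its work, and is where the alternative proof replaces the $(n,\delta)$-strained machinery of KMS.
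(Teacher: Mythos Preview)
Your proposal has a genuine gap at the crucial step, and it is not the obstacle you flag. Even granting everything you say about noncriticality and about obtaining a smooth $A_i$--invariant decomposition $\widetilde M_i = D_1 \cup_E D_2$ with $E\cong S^{n-1}$ and $A_i$ swapping the two disks, you \emph{cannot} conclude that $\widetilde M_i$ is diffeomorphic to $S^n$, let alone via a map conjugating $A_i$ to the antipodal map. A union of two standard $n$--disks along a diffeomorphism of $S^{n-1}$ is a twisted sphere, and for $n\geq 7$ twisted spheres realize every exotic sphere. Moreover, even when $\widetilde M_i$ happens to be the standard $S^n$, the free involution $A_i$ need not be smoothly conjugate to $-\mathrm{id}$: there exist exotic free involutions on $S^n$ (this is precisely the existence of fake projective spaces; see the references to \cite{CS}, \cite{FintSt}, \cite{HKT} in the paper). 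Your sentence ``produce a diffeomorphism $\Phi_i\colon \widetilde M_i \to D^n\cup_{S^{n-1}} D^n = S^n$ that conjugates $A_i$ to the antipodal map'' simply asserts the theorem. The gradient flow gives you the disk decomposition and nothing more.

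This is exactly why the paper does not argue this way. For $n\geq 5$ it builds a $C^1$, $\mathbb{Z}_2$--equivariant embedding $\widetilde M^\alpha \hookrightarrow \mathbb{R}^{n+1}\setminus\{0\}$ (Theorem~\ref{cross cap embedding}) using averaged distance functions lifted from an explicit model embedding of $\mathbb{D}_k^n(r)$; the embedding, not a suspension structure, is what pins down the involution. One then gets an $h$--cobordism between $M^\alpha$ and $\mathbb{R}P^n$, and since $\mathrm{Wh}(\mathbb{Z}/2)=0$ this is an $s$--cobordism, hence a product. In dimension $4$ the $s$--cobordism theorem is unavailable, so the paper uses a version of your disk decomposition but must invoke Geometrization (to identify the equator quotient with $\mathbb{R}P^3$) and Cerf's theorem that $\mathrm{Diff}_+(S^3)$ is connected (to control the gluing). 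Those two inputs are doing precisely the work that your argument leaves out.
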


Since all points of Crosscaps are $\left( n,0\right) $--strained, this
follows from Theorem 6.1 in \cite{KMS}. We present a different proof here
that is of independent interest because of its relationship to the proofs of
the main theorems in \cite{OSY} and \cite{ProSillWilh}.

\begin{remark}
Theorem \ref{Cross Cap Stability} when $k=1$ and $r=\frac{\pi }{2}$ follows
from the main theorem in \cite{Yam1} and the fact that $C_{1,\frac{\pi }{2}%
}^{n}$ is $\mathbb{R}P^{n}$ with constant curvature $1$ $.$
\end{remark}

Section 2 introduces notations and conventions. Section 3 is review of
necessary tools from Alexandrov geometry. Section 4 develops machinery and
proves Theorem \ref{Cross Cap Stability} in the case when $n\neq 4.$ Theorem %
\ref{Cross Cap Stability} in dimension 4 is proven in Section 5.

Throughout the remainder of the paper, we assume without loss of generality,
by rescaling if necessary, that $k=-1,0$ or $1$.

\begin{acknowledgment}
We are grateful to Stefano Vidussi for several conversations about exotic
differential structures on $\mathbb{R}P^{4}.$

We are grateful to a referee of this paper for making us aware of the
results in \cite{KMS}.
\end{acknowledgment}

\section{Conventions and Notations}

We assume a basic familiarity with Alexandrov spaces, including but not
limited to \cite{BGP}. Let $X$ be an $n$--dimensional Alexandrov space and $%
x,p,y\in X$.

\begin{enumerate}
\item We call minimal geodesics in $X$ \emph{segments}. We denote by $px$ a
segment in $X$ with endpoints $p$ and $x$.

\item We let $\Sigma_{p}$ and $T_{p}X$ denote the space of directions and
tangent cone at $p$, respectively.

\item For $v\in T_{p}X$ we let $\gamma _{v}$ be the segment whose initial
direction is $v.$

\item Following \cite{Pet}, $\Uparrow _{x}^{p}\subset \Sigma _{x}$ will
denote the set of directions of segments from $x$ to $p,$ and $\uparrow
_{x}^{p}\in $ $\Uparrow _{x}^{p}$ denotes the direction of a single segment
from $x$ to $p.$

\item We let $\sphericalangle (x,p,y)$ denote the angle of a hinge formed by 
$px$ and $py$ and $\tilde{\sphericalangle}(x,p,y)$ denote the corresponding
comparison angle.

\item Following \cite{OSY}, we let $\tau :\mathbb{R}^{k}\rightarrow \mathbb{R%
}_{+}$ be any function that satisfies 
\begin{equation*}
\lim_{x_{1},\ldots ,x_{k}\rightarrow 0}\tau \left( x_{1},\ldots
,x_{k}\right) =0,
\end{equation*}%
and abusing notation we let $\tau :\mathbb{R}^{k}\times \mathbb{R}%
^{n}\rightarrow \mathbb{R}$ be any function that satisfies 
\begin{equation*}
\lim_{x_{1},\ldots ,x_{k}\rightarrow 0}\tau \left( x_{1},\ldots
,x_{k}|y_{1},\ldots ,y_{n}\right) =0,
\end{equation*}%
provided that $y_{1},\ldots ,y_{n}$ remain fixed.

When making an estimate with a function $\tau $ we implicitly assert the
existence of such a function for which the estimate holds.

\item We denote by $\mathbb{R}^{1,n}$ the Minkowski space $(\mathbb{R}%
^{n+1},g),$ where $g$ is the semi-Riemannian metric defined by 
\begin{equation*}
g=-dx_{0}^{2}+dx_{1}^{2}+\cdots +dx_{n}^{2}
\end{equation*}%
for coordinates $(x_{0},x_{1},\cdots ,x_{n})$ on $\mathbb{R}^{n+1}$.

\item We reserve $\{e_{j}\}_{j=0}^{m}$ for the standard orthonormal basis in
both euclidean and Minkowski space.

\item We use two isometric models for hyperbolic space, 
\begin{equation*}
H_{+}^{n}:=\left\{ \left. (x_{0},x_{1},\cdots ,x_{n})\in \mathbb{R}%
^{n+1}\right\vert -\left( x_{0}\right) ^{2}+\left( x_{1}\right) ^{2}+\cdots
+\left( x_{n}\right) ^{2}=-1,\text{ }x_{0}>0\right\}
\end{equation*}%
and 
\begin{equation*}
H_{-}^{n}:=\left\{ \left. (x_{0},x_{1},\cdots ,x_{n})\in \mathbb{R}%
^{n+1}\right\vert -\left( x_{0}\right) ^{2}+\left( x_{1}\right) ^{2}+\cdots
+\left( x_{n}\right) ^{2}=-1,\text{ }x_{0}<0\right\} .
\end{equation*}

\item We obtain explicit double disks, $\mathbb{D}_{k}^{n}\left( r\right) :=%
\mathcal{D}_{k}^{n}\left( r\right) ^{+}\cup _{\partial \mathcal{D}%
_{k}^{n}(r)^{\pm }}\mathcal{D}_{k}^{n}\left( r\right) ^{-},$ by viewing $%
\mathcal{D}_{k}^{n}\left( r\right) ^{+}$ and $\mathcal{D}_{k}^{n}\left(
r\right) ^{-}$ explicitly as 
\begin{equation*}
\mathcal{D}_{k}^{n}\left( r\right) ^{+}:=\left[ 
\begin{array}{cc}
\left\{ \left. z\in H_{+}^{n}\subset \mathbb{R}^{1,n}\right\vert \mathrm{dist%
}_{H_{+}^{n}}\left( e_{0},z\right) \leq r\right\} & \text{if }k=-1 \\ 
\left\{ \left. z\in \left\{ e_{0}\right\} \times \mathbb{R}^{n}\subset 
\mathbb{R}^{n+1}\right\vert \mathrm{dist}_{\mathbb{R}^{n+1}}\left(
e_{0},z\right) \leq r\right\} & \text{if }k=0 \\ 
\left\{ \left. z\in S^{n}\subset \mathbb{R}^{n+1}\right\vert \mathrm{dist}%
_{S^{n}}\left( e_{0},z\right) \leq r\right\} & \text{if }k=1,%
\end{array}%
\right.
\end{equation*}%
and%
\begin{equation*}
\mathcal{D}_{k}^{n}\left( r\right) ^{-}:=\left[ 
\begin{array}{cc}
\left\{ \left. z\in H_{-}^{n}\subset \mathbb{R}^{1,n}\right\vert \mathrm{dist%
}_{H_{-}^{n}}\left( -e_{0},z\right) \leq r\right\} & \text{if }k=-1 \\ 
\left\{ \left. z\in \left\{ -e_{0}\right\} \times \mathbb{R}^{n}\subset 
\mathbb{R}^{n+1}\right\vert \mathrm{dist}_{\mathbb{R}^{n+1}}\left(
-e_{0},z\right) \leq r\right\} & \text{if }k=0 \\ 
\left\{ \left. z\in S^{n}\subset \mathbb{R}^{n+1}\right\vert \mathrm{dist}%
_{S^{n}}\left( -e_{0},z\right) \leq r\right\} & \text{if }k=1.%
\end{array}%
\right.
\end{equation*}
\end{enumerate}

Since $r<\frac{\pi }{2}$ when $k=1,$ $\mathcal{D}_{k}^{n}\left( r\right)
^{+} $ and $\mathcal{D}_{k}^{n}\left( r\right) ^{-}$ are disjoint in all
three cases.

%
%
%

\section{Basic Tools From Alexandrov Geometry}

The notion of strainers \cite{BGP} in an Alexandrov space forms the core of
the calculus arguments used to prove our main theorem. In this section, we
review this notion and its relevant consequences. In some sense the idea can
be traced back to \cite{OSY}, and some of the ideas that we review first
appeared in other sources such as \cite{Wilh} and \cite{Yam2}.

\begin{definition}
Let $X$ be an Alexandrov space. A point $x\in X$ is said to be $\left(
n,\delta ,r\right) $--strained by the strainer $\left\{ \left(
a_{i},b_{i}\right) \right\} _{i=1}^{n}\subset X\times X$ provided that for
all $i\neq j$ we have%
\begin{equation*}
\begin{array}{ll}
\widetilde{\sphericalangle }\left( a_{i},x,b_{j}\right) >\frac{\pi }{2}%
-\delta , & \widetilde{\sphericalangle }\left( a_{i},x,b_{i}\right) >\pi
-\delta , \\ 
\widetilde{\sphericalangle }\left( a_{i},x,a_{j}\right) >\frac{\pi }{2}%
-\delta , & \widetilde{\sphericalangle }\left( b_{i},x,b_{j}\right) >\frac{%
\pi }{2}-\delta ,\text{ and} \\ 
\multicolumn{2}{c}{\min_{i=1,\ldots ,n}\left\{ \mathrm{dist}%
(\{a_{i},b_{i}\},x)\right\} >r.}%
\end{array}%
\end{equation*}

We say a metric ball $B\subset X$ is an $(n,\delta ,r)$--strained
neighborhood with strainer $\{a_{i},b_{i}\}_{i=1}^{n}$ provided every point $%
x\in B$ is $(n,\delta ,r)$--strained by $\{a_{i},b_{i}\}_{i=1}^{n}$.
\end{definition}

The following is observed in \cite{Yam2}.

\begin{proposition}
\label{posdeltastrainedrad} Let $X$ be a compact $n$-dimensional Alexandrov
space. Then the following are equivalent.

\begin{description}
\item[1] There is a (sufficiently small) $\eta >0$ so that for every $p\in X$%
\begin{equation*}
\mathrm{dist}_{G-H}\left( \Sigma _{p},S^{n-1}\right) <\eta .
\end{equation*}

\item[2] There is a (sufficiently small) $\delta >0$ and an $r>0$ such that $%
X$ is covered by finitely many $(n,\delta ,r)$--strained neighborhoods.
\end{description}
\end{proposition}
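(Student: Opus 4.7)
My plan is to prove the two implications separately, exploiting the fact that being $(n,\delta,r)$--strained is, morally, an infinitesimal condition detected by $\Sigma_x$, together with the semicontinuity of comparison angles under small perturbations of the base point.

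For the implication $(2)\Rightarrow(1)$, I would fix any strained point $x\in X$ together with its strainer $\{(a_i,b_i)\}_{i=1}^n$. Choosing directions $\xi_i=\uparrow_x^{a_i}$ and $\eta_i=\uparrow_x^{b_i}$ in $\Sigma_x$, the Toponogov hinge comparison yields
\begin{equation*}
\sphericalangle(\xi_i,\xi_j)\geq\widetilde{\sphericalangle}(a_i,x,a_j)>\tfrac{\pi}{2}-\delta,\qquad
\sphericalangle(\xi_i,\eta_i)\geq\widetilde{\sphericalangle}(a_i,x,b_i)>\pi-\delta,
\end{equation*}
and similarly for the mixed pairs, so $\{\xi_i,\eta_i\}$ is a $\delta$--strainer on the curvature $\geq 1$ space $\Sigma_x$. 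The BGP structure theory for strained points (cf.\ \cite{BGP}) then gives a $(1+\tau(\delta))$--bilipschitz map from a neighborhood of $x$ in $X$ onto an open set of $\mathbb{R}^n$, from which small metric spheres around $x$ are $\tau(\delta)$--close to $S^{n-1}$. Rescaling and passing to the blow-up then forces $\dist_{G-H}(\Sigma_x,S^{n-1})<\tau(\delta)$; choosing $\delta$ small enough delivers the prescribed $\eta$.

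For the implication $(1)\Rightarrow(2)$, I would fix $p\in X$ and transfer a standard basis $\{\pm e_i\}_{i=1}^n\subset S^{n-1}$ along the $\eta$--approximation to obtain directions $\xi_i^\pm\in\Sigma_p$ with
\begin{equation*}
\sphericalangle(\xi_i^+,\xi_i^-)>\pi-2\eta,\qquad
\sphericalangle(\xi_i^\varepsilon,\xi_j^{\varepsilon'})>\tfrac{\pi}{2}-2\eta\quad(i\neq j).
\end{equation*}
Pick a small $r_0>0$ and run segments of length $r_0$ in each direction; their endpoints are candidate strainer points $a_i,b_i\in X$, and the opening--angle inequalities at $p$ hold by construction. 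By upper semicontinuity of the comparison angle $\widetilde{\sphericalangle}(\cdot,\cdot,\cdot)$ in its middle argument, these inequalities persist on some small ball $B(p,s_p)$, so $B(p,s_p)$ is an $(n,3\eta,r_0/2)$--strained neighborhood. Since $X$ is compact, finitely many such balls cover $X$; taking $\delta$ to be the largest associated $3\eta$ and $r$ to be the smallest associated $r_0/2$ completes the argument.

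The main obstacle is the $(2)\Rightarrow(1)$ direction: passing from a $\delta$--strainer configuration in $\Sigma_x$ to a genuine Gromov--Hausdorff bound against $S^{n-1}$ is not formal, and I would lean on the bilipschitz regularity of strained neighborhoods from \cite{BGP} to push small distance spheres about $x$ to near--round spheres in $\mathbb{R}^n$, then appeal to the tangent--cone characterization $\Sigma_x=\lim_{\lambda\to\infty}\lambda\cdot(X,x)\cap S(x,1/\lambda)$ to transfer the conclusion. The reverse direction is comparatively routine once one observes that the strainer property is open in the base point and that compactness of $X$ replaces pointwise statements with uniform ones.
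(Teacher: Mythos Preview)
The paper does not supply a proof of this proposition at all; it is merely quoted with the attribution ``The following is observed in \cite{Yam2}.''  So there is no argument in the paper to compare against, and your sketch stands on its own.

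Your outline is essentially correct, but two technical points in the direction $(1)\Rightarrow(2)$ deserve a sentence each.  First, an arbitrary direction in $\Sigma_p$ need not be the initial direction of a segment in $X$, so ``run segments of length $r_0$ in each direction $\xi_i^\pm$'' is not literally available; you must first replace $\xi_i^\pm$ by nearby geodesic directions (these are dense in $\Sigma_p$), which costs only an extra $\tau(\eta)$ in the angle estimates.  Second, the strainer definition involves \emph{comparison} angles $\widetilde{\sphericalangle}$, whereas what you control via the Gromov--Hausdorff approximation are the \emph{actual} angles in $\Sigma_p$; since Toponogov only gives $\sphericalangle\geq\widetilde{\sphericalangle}$, you need the standard fact that $\widetilde{\sphericalangle}(a_i,p,a_j)\to\sphericalangle(\uparrow_p^{a_i},\uparrow_p^{a_j})$ as $r_0\to 0$ to recover the required lower bounds on the comparison angles.

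For $(2)\Rightarrow(1)$ your route through the bilipschitz chart on $X$, small metric spheres, and the blow-up is valid but circuitous.  Once you observe that the straining directions $\{\uparrow_x^{a_i},\uparrow_x^{b_i}\}$ form a global $(n,\tau(\delta))$--strainer on $\Sigma_x$ (the missing \emph{upper} bounds $\sphericalangle(\xi_i,\xi_j)<\tfrac{\pi}{2}+\tau(\delta)$ follow from the perimeter inequality $\mathrm{per}\leq 2\pi$ in curvature $\geq 1$), you may invoke the paper's Theorem~\ref{BGP--SOY} directly on $\Sigma_x$ to obtain a $(1\pm\tau(\delta))$--bilipschitz map $\Sigma_x\to S^{n-1}$, which immediately gives the Gromov--Hausdorff bound without passing through tangent cones.
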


\begin{theorem}
\label{BGP coordinates}(\cite{BGP} Theorem 9.4) Let $X$ be an $n$%
--dimensional Alexandrov space with curvature bounded from below. Let $p\in
X $ be $\left( n,\delta ,r\right) $--strained by $\left\{ \left(
a_{i},b_{i}\right) \right\} _{i=1}^{n}.$ Provided $\delta $ is small enough,
there is a $\rho >0$ such that the map $f:B(p,\rho )\rightarrow \mathbb{R}%
^{n}$ defined by 
\begin{equation*}
f(x)=\left( \mathrm{dist}\left( a_{1},x\right) ,\mathrm{dist}\left(
a_{2},x\right) ,\ldots ,\mathrm{dist}\left( a_{n},x\right) \right)
\end{equation*}%
is a bi-Lipschitz embedding with Lipschitz constants in $\left( 1-\tau
\left( \delta ,\rho \right) ,1+\tau \left( \delta ,\rho \right) \right) .$
\end{theorem}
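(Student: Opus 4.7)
The plan is to show that the coordinate functions $\mathrm{dist}(a_i,\cdot)$ behave, up to error $\tau(\delta,\rho)$, like orthonormal linear functions on a Euclidean ball, and then to convert this into a bi-Lipschitz statement via the first variation formula. The upper Lipschitz bound is almost free: each coordinate $x\mapsto \mathrm{dist}(a_i,x)$ is $1$-Lipschitz, so in the Euclidean norm $f$ is automatically $\sqrt{n}$-Lipschitz, and the sharper $1+\tau(\delta,\rho)$ bound will drop out of the same angle analysis that handles the lower bound.

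The core step is to establish that for every $x\in B(p,\rho)$, the directions $\{\uparrow_x^{a_i}\}_{i=1}^n \subset \Sigma_x$ form a nearly orthonormal system with nearly antipodal partners $\{\uparrow_x^{b_i}\}$. At $p$ this is exactly the hypothesis on comparison angles together with Toponogov monotonicity: angles $\widetilde{\sphericalangle}$ close to $\pi$ on hinges $(a_i,p,b_i)$ force $\sphericalangle(\uparrow_p^{a_i},\uparrow_p^{b_i})$ close to $\pi$, and the other comparison inequalities force the remaining pairs to be close to $\pi/2$. To propagate this to nearby $x$, I would use the fact that $(n,\delta,r)$-strainer conditions are open in $x$: for $\rho$ small relative to $r$ and $\delta$, the comparison angles at $x$ differ from those at $p$ by at most $\tau(\delta,\rho)$ (because the side lengths $\mathrm{dist}(a_i,x)$, $\mathrm{dist}(b_i,x)$ change by at most $\rho$, which is small compared to $r$).

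Next, I would invoke the Alexandrov first variation inequality: for $x,y\in B(p,\rho)$,
\begin{equation*}
\mathrm{dist}(a_i,y)-\mathrm{dist}(a_i,x) = -\mathrm{dist}(x,y)\cos\sphericalangle(\uparrow_x^{a_i},\uparrow_x^y) + o(\mathrm{dist}(x,y)),
\end{equation*}
with the error controlled uniformly by $\tau(\delta,\rho)\cdot\mathrm{dist}(x,y)$ thanks to the two-sided angle control from the strainer (upper bounds on $\sphericalangle$ come from using the $b_i$'s as witnesses of near-antipodality, which is the standard BGP trick for obtaining upper angle estimates in spaces with only a lower curvature bound). Writing $c_i:=\cos\sphericalangle(\uparrow_x^{a_i},\uparrow_x^y)$, this yields
\begin{equation*}
\bigl|f(y)-f(x)\bigr|^2 = \mathrm{dist}(x,y)^2\sum_{i=1}^n c_i^2 + \tau(\delta,\rho)\,\mathrm{dist}(x,y)^2.
\end{equation*}
The almost-orthonormality of $\{\uparrow_x^{a_i}\}$ then forces $\sum_i c_i^2 = 1 \pm \tau(\delta,\rho)$, by applying Gram-matrix estimates to the unit vector $\uparrow_x^y\in\Sigma_x$ expanded in the near-orthonormal system $\{\uparrow_x^{a_i}\}$; this is where the dimension hypothesis enters, since $\Sigma_x$ is $(n-1)$-dimensional so the $n$ directions span. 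Combining gives both the lower and upper bi-Lipschitz constants in $(1-\tau(\delta,\rho),1+\tau(\delta,\rho))$, and bi-Lipschitz immediately implies injectivity, so $f$ is an embedding.

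The main obstacle is the angle upper bound. A lower curvature bound gives lower bounds on $\sphericalangle$ directly from $\widetilde{\sphericalangle}$, but upper bounds on $\sphericalangle(\uparrow_x^{a_i},\uparrow_x^y)$ are not automatic; one must use the near-antipodality of $\uparrow_x^{a_i}$ and $\uparrow_x^{b_i}$ together with the triangle inequality $\sphericalangle(\uparrow_x^{a_i},\uparrow_x^y)+\sphericalangle(\uparrow_x^{b_i},\uparrow_x^y)\geq \sphericalangle(\uparrow_x^{a_i},\uparrow_x^{b_i})$ in $\Sigma_x$, converting a lower bound on $\sphericalangle(\uparrow_x^{b_i},\uparrow_x^y)$ (from Toponogov) into an upper bound on $\sphericalangle(\uparrow_x^{a_i},\uparrow_x^y)$. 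Handling this symmetrically for all $i$ and all pairs, and bookkeeping the resulting $\tau$'s so that they depend only on $\delta$ and $\rho$ (not on the individual lengths), is the delicate part; everything else is a linearization of an almost-Euclidean configuration.
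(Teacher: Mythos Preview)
The paper does not give its own proof of this statement; it is quoted verbatim as Theorem~9.4 of \cite{BGP} and used as a black box. So there is no in-paper argument to compare against.

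That said, your sketch is essentially the standard Burago--Gromov--Perelman proof: propagate the strainer inequalities from $p$ to all $x\in B(p,\rho)$, use the $(a_i,b_i)$ near-antipodality in $\Sigma_x$ to upgrade Toponogov's one-sided angle bounds to two-sided ones (this is exactly the paper's Lemma~5.6 from \cite{BGP}, restated here just before Corollary~\ref{Angle continuity}), and then linearize via first variation. One point you pass over a bit quickly is the claim $\sum_i c_i^2 = 1 \pm \tau(\delta,\rho)$: this is not merely a dimension count but really uses that the global $(n,\delta)$-strainer on $\Sigma_x$ forces $\Sigma_x$ to be $\tau(\delta)$-close to the unit round sphere $S^{n-1}$ (cf.\ Theorem~\ref{BGP--SOY} in the paper, which is \cite{BGP} Theorem~9.5), so that the near-orthonormal frame $\{\uparrow_x^{a_i}\}$ genuinely acts like an orthonormal basis for purposes of Pythagoras. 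With that understood, your outline is correct and matches the original source.
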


If every point in $X$ is $(n,\delta ,r)$--strained, we can equip $X$ with a $%
C^{1}$--differentiable structure defined by Otsu and Shioya in \cite{OS}.
The charts will be smoothings of the map from the theorem above and are
defined as follows: Let $x\in X$ and choose $\sigma >0$ so that $B(x,\sigma
) $ is $(n,\delta ,r)$--strained by $\{a_{i},b_{i}\}_{i=1}^{n}$. Define $%
d_{i,x}^{\eta }:B(x,\sigma )\rightarrow \mathbb{R}$ by 
\begin{equation*}
d_{i,x}^{\eta }(y)=\frac{1}{\mathrm{vol}(B(a_{i},\eta ))}\int_{z\in
B(a_{i},\eta )}\mathrm{dist}(y,z).
\end{equation*}%
Then $\varphi _{x}^{\eta }:B(x,\sigma )\rightarrow \mathbb{R}^{n}$ is
defined by 
\begin{equation}
\varphi _{x}^{\eta }(y)=(d_{1,x}^{\eta }(y),\ldots ,d_{n,x}^{\eta }(y)).
\label{Otsu-Shioya}
\end{equation}

If $B$ is $(n,\delta ,r)$--strained by $\{a_{i},b_{i}\}_{i=1}^{n}$, any
choice of $2n$--directions $\left\{ \left( \uparrow _{x}^{a_{i}},\uparrow
_{x}^{b_{i}}\right) \right\} _{i=1}^{n}$ where $x\in B$ will be called a set
of straining directions for $\Sigma _{x}.$ As in, \cite{BGP,Yam2}, we say an
Alexandrov space $\Sigma $ with $\mathrm{curv\,}\Sigma \geq 1$ is globally $%
(m,\delta )$-strained by pairs of subsets $\{A_{i},B_{i}\}_{i=1}^{m}$
provided 
\begin{equation*}
\begin{array}{ll}
|\mathrm{dist}(a_{i},b_{j})-\frac{\pi }{2}|<\delta , & \mathrm{dist}%
(a_{i},b_{i})>\pi -\delta , \\ 
|\mathrm{dist}(a_{i},a_{j})-\frac{\pi }{2}|<\delta , & |\mathrm{dist}%
(b_{i},b_{j})-\frac{\pi }{2}|<\delta%
\end{array}%
\end{equation*}%
for all $a_{i}\in A_{i}$, $b_{i}\in B_{i}$ and $i\neq j$.

\begin{theorem}
\label{BGP--SOY}(\cite{BGP} Theorem 9.5, cf also \cite{OSY} Section 3) Let $%
\Sigma $ be an $\left( n-1\right) $--dimensional Alexandrov space with
curvature $\geq 1.$ Suppose $\Sigma $ is globally strained by $%
\{A_{i},B_{i}\}$. There is a map $\tilde{\Psi}:\mathbb{R}^{n}\longrightarrow
S^{n-1}$ so that $\Psi :\Sigma \rightarrow S^{n-1}$ defined by 
\begin{equation*}
\Psi (x)=\tilde{\Psi}\circ \left( \mathrm{dist}\left( A_{1},x\right) ,%
\mathrm{dist}\left( A_{2},x\right) ,\ldots ,\mathrm{dist}\left(
A_{n},x\right) \right)
\end{equation*}%
is a bi-Lipschitz homeomorphisms with Lipshitz constants in $\left( 1-\tau
\left( \delta \right) ,1+\tau \left( \delta \right) \right) $.
\end{theorem}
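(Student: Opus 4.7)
The plan is to mimic the construction on the round model $S^{n-1}$, where for the antipodal strainer $\{\pm e_i\}_{i=1}^{n}$ one recovers a point $x$ from its distances $t_i = \arccos(x_i)$ via $x = (\cos t_1,\ldots,\cos t_n)$, which automatically lies on the unit sphere. Accordingly I would set
\begin{equation*}
\tilde{\Psi}(t_1,\ldots,t_n) \;=\; \frac{(\cos t_1,\ldots,\cos t_n)}{|(\cos t_1,\ldots,\cos t_n)|},
\end{equation*}
defined on the open set where the denominator is nonzero. The composition with $x \mapsto (\mathrm{dist}(A_1,x),\ldots,\mathrm{dist}(A_n,x))$ gives the map $\Psi : \Sigma \to S^{n-1}$ of the theorem. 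The task is then to verify that $\tilde{\Psi}$ is defined and smooth on the relevant domain, and that $\Psi$ is a bi-Lipschitz homeomorphism with constants in $(1-\tau(\delta), 1+\tau(\delta))$.

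First I would show that $\sum_i \cos^2 \mathrm{dist}(A_i,x) = 1 \pm \tau(\delta)$ for every $x \in \Sigma$. The global strainer hypothesis forces the directions $\uparrow_x^{a_i}$ ($a_i \in A_i$) to be pairwise almost orthogonal in $\Sigma_x$ and each pair $\{\uparrow_x^{a_i}, \uparrow_x^{b_i}\}$ to be almost antipodal; moreover $\Sigma_x$ is itself almost a round $S^{n-2}$ by Proposition \ref{posdeltastrainedrad} (or equivalently by the fact that strainers pass to the space of directions). A Pythagorean-type computation on such an almost orthonormal frame, together with the Toponogov comparison $\cos \mathrm{dist}(a_i,x) = \langle v, \uparrow_x^{a_i}\rangle \pm \tau(\delta)$ for a suitable unit reference direction $v$ (really an identity from spherical trigonometry applied to the almost-round $\Sigma$), then yields the claimed near-unit norm. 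This step places the image of the unnormalized map into a $\tau(\delta)$-neighborhood of $S^{n-1}$, so that $\tilde{\Psi}$ is well defined and differs from the identity in the radial direction by a factor $1 \pm \tau(\delta)$.

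The upper Lipschitz bound for $\Psi$ is routine: each coordinate $\cos \mathrm{dist}(A_i,\cdot)$ is $1$-Lipschitz, and the radial normalization $\tilde{\Psi}$ has derivative of norm $1 + \tau(\delta)$ on its domain by the previous step. The core of the proof is the lower bound, which I would obtain from the first variation formula: for $x,y \in \Sigma$ close together and $\xi$ the initial direction of a segment $xy$,
\begin{equation*}
\mathrm{dist}(A_i,y) - \mathrm{dist}(A_i,x) \;=\; -\cos \sphericalangle(a_i,x,y)\cdot \mathrm{dist}(x,y) \;+\; o\bigl(\mathrm{dist}(x,y)\bigr),
\end{equation*}
so the coordinate-wise derivative of $F(x) = (\cos \mathrm{dist}(A_i,x))_i$ along $\xi$ is essentially $(\sin \mathrm{dist}(A_i,x)\,\langle \xi, \uparrow_x^{a_i}\rangle)_i$. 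Because the $\uparrow_x^{a_i}$ form an almost orthonormal $n$-frame in the almost-round $(n-2)$-sphere $\Sigma_x$ and the factors $\sin \mathrm{dist}(A_i,x)$ are close to the sines appearing in the round model, the norm of this vector is $(1 \pm \tau(\delta))|\xi| = 1\pm \tau(\delta)$. Integrating along a segment gives $|F(x) - F(y)| \geq (1-\tau(\delta))\,\mathrm{dist}(x,y)$, and post-composing with $\tilde{\Psi}$ (whose inverse has Lipschitz constant $1 + \tau(\delta)$ near $S^{n-1}$) yields the required two-sided bound.

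The main obstacle is the coordinated use of Toponogov in the preceding paragraph: one must simultaneously track the near-orthogonality of the $n$ directions $\uparrow_x^{a_i}$, the near-antipodality within each pair, and the spherical identities relating $\mathrm{dist}(a_i,x)$ to the coordinates on the model $S^{n-1}$, each only up to an error controlled by $\delta$. Once this is in hand, injectivity is automatic from the lower Lipschitz bound, and surjectivity follows from invariance of domain applied to the continuous injection between compact $(n-1)$-dimensional topological spaces $\Sigma$ and $S^{n-1}$ (noting that $\Sigma$ is a topological manifold by Theorem \ref{BGP coordinates} applied on the space of directions).
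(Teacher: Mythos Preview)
The paper does not give its own proof of this theorem; it is quoted from \cite{BGP} Theorem~9.5. The only thing the paper adds is the Remark immediately following the statement, which records the explicit formula
\[
\Psi(x)=\Bigl(\sum_i\cos^2\mathrm{dist}(A_i,x)\Bigr)^{-1/2}\bigl(\cos\mathrm{dist}(A_1,x),\ldots,\cos\mathrm{dist}(A_n,x)\bigr),
\]
and says this can be seen by combining the proof in \cite{BGP} with a limiting argument. Your choice of $\tilde\Psi$ matches this formula exactly, and your overall plan (verify the two Lipschitz bounds via first variation, then conclude injectivity and surjectivity) is the right shape.

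There is, however, a genuine gap. Both your Step~2 and Step~4 rest on the claim that ``the directions $\uparrow_x^{a_i}$ are pairwise almost orthogonal in $\Sigma_x$'' and ``form an almost orthonormal $n$--frame.'' This is false. First, $\Sigma_x$ is $(n-2)$--dimensional, so the tangent cone $T_x\Sigma$ is only $(n-1)$--dimensional and cannot contain $n$ orthonormal vectors. Second, the claim already fails on the round model: at $x\in S^{n-1}$ the spherical law of cosines gives
\[
\cos\sphericalangle(e_i,x,e_j)=\frac{-x_ix_j}{\sqrt{(1-x_i^2)(1-x_j^2)}},
\]
which is generically nonzero and can equal $\pm1$. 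So the linear‐algebra reason you give for $\sum_i\cos^2\mathrm{dist}(A_i,x)\approx1$ and for $\bigl|(\sin t_i\cos\theta_i)_i\bigr|\approx1$ does not hold.

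What actually makes those estimates true is a different computation on the model. With the standard strainer $\{e_i,-e_i\}$ on $S^{n-1}$ one has $\cos\mathrm{dist}(e_i,x)=x_i$, so the unnormalized map is literally the isometric inclusion $S^{n-1}\hookrightarrow\mathbb{R}^n$; in particular $\sum_i\cos^2=1$ exactly, and for a unit $\xi\in T_xS^{n-1}$ one checks, using $\uparrow_x^{e_i}=(e_i-x_ix)/\sin t_i$ and $\xi\perp x$, that $\sin t_i\cos\theta_i=\xi_i$, hence the differential is the identity. None of this uses orthogonality of the $\uparrow_x^{e_i}$. Passing from the model to a globally $(n,\delta)$--strained $\Sigma$ is then exactly the ``limiting argument'' the paper's Remark invokes: as $\delta\to0$ one has $\Sigma\to S^{n-1}$ in Gromov--Hausdorff distance, the strainers converge to $\{\pm e_i\}$, and the relevant angles converge (as in Corollary~\ref{Angle continuity} and Lemma~\ref{angle convergence}). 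If you replace the orthonormal--frame claim by this model computation together with convergence of angles, your outline goes through.
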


\begin{remark}
The description of $\tilde{\Psi}:\mathbb{R}^{n}\longrightarrow S^{n-1}$ in 
\cite{BGP} is explicit but is geometric rather than via a formula. Combining
the proof in \cite{BGP} with a limiting argument, one can see that the map $%
\Psi $ can be given by 
\begin{equation*}
\Psi (x)=\left( \sum \cos ^{2}\left( \mathrm{dist}\left( A_{i},x\right)
\right) \right) ^{-1/2}\left( \cos \left( \mathrm{dist}\left( A_{1},x\right)
\right) ,\ldots ,\cos \left( \mathrm{dist}\left( A_{n},x\right) \right)
\right) .
\end{equation*}%
In particular, the differentials of $\varphi _{x}^{\eta }:B(x,\sigma
)\subset X\longrightarrow \varphi (B(x,\sigma ))$ are almost isometries.
\end{remark}

Next we state a powerful lemma showing that for an $(n,\delta ,r)$ strained
neighborhood, angle and comparison angle almost coincide for geodesic hinges
with one side in this neighborhood and the other reaching a strainer.

\begin{lemma}
(\cite{BGP} Lemma $5.6$) Let $B\subset X$ be $\left( 1,\delta ,r\right) $%
--strained by $(y_{1},y_{2}).$ For any $x,z\in B$%
\begin{equation*}
\left\vert \tilde{\sphericalangle}\left( y_{1},x,z\right) +\tilde{%
\sphericalangle}\left( y_{2},x,z\right) -\pi \right\vert <\tau \left( \delta
,\mathrm{dist}\left( x,z\right) |r\right)
\end{equation*}%
In particular, for $i=1,2$, 
\begin{equation*}
\left\vert \sphericalangle \left( y_{i},x,z\right) -\tilde{\sphericalangle}%
\left( y_{i},x,z\right) \right\vert <\tau \left( \delta ,\mathrm{dist}\left(
x,z\right) |r\right) .
\end{equation*}
\end{lemma}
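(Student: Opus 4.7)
The argument splits into three pieces: the upper and lower bounds on $\tilde{\sphericalangle}(y_1,x,z)+\tilde{\sphericalangle}(y_2,x,z)$, and the deduction of the second inequality from the first. For the upper bound I would apply the four-point monotonicity inequality --- the defining condition for curvature $\geq k$ --- at vertex $x$ with outward points $y_1,z,y_2$:
\[
\tilde{\sphericalangle}(y_1,x,z)+\tilde{\sphericalangle}(z,x,y_2)+\tilde{\sphericalangle}(y_2,x,y_1)\leq 2\pi.
\]
Substituting the strainer hypothesis $\tilde{\sphericalangle}(y_1,x,y_2)>\pi-\delta$ immediately gives $\tilde{\sphericalangle}(y_1,x,z)+\tilde{\sphericalangle}(y_2,x,z)<\pi+\delta$.

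The lower bound is the main technical content. First, the strainer hypothesis applied through the $k$-model law of cosines to $\triangle y_1xy_2$ yields the near-collinearity estimate $\mathrm{dist}(x,y_1)+\mathrm{dist}(x,y_2)-\mathrm{dist}(y_1,y_2)<\tau_0(\delta\mid r)$. Second, I would set up the $M_k^2$ comparison triangle $\tilde\triangle\, \tilde y_1\tilde x\tilde z$ for $\triangle y_1xz$ and place an auxiliary point $\tilde y_2$ on the geodesic extension of $\tilde y_1\tilde x$ past $\tilde x$ at distance $\mathrm{dist}(x,y_2)$; the auxiliary triangle $\tilde\triangle\, \tilde y_2\tilde x\tilde z$ then has angle $\pi-\tilde{\sphericalangle}(y_1,x,z)$ at $\tilde x$ and $\mathrm{dist}(\tilde y_1,\tilde y_2)=\mathrm{dist}(x,y_1)+\mathrm{dist}(x,y_2)$. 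Combining the near-collinearity estimate with the triangle inequality $\mathrm{dist}(y_1,z)+\mathrm{dist}(z,y_2)\geq \mathrm{dist}(y_1,y_2)$ and a direct law-of-cosines calculation produces
\[
\tilde{\sphericalangle}(y_2,x,z)\geq \pi-\tilde{\sphericalangle}(y_1,x,z)-\tau_1(\delta,\mathrm{dist}(x,z)\mid r),
\]
which is the lower bound. I expect the main obstacle to be the explicit dependence of this error on $\mathrm{dist}(x,z)$: a naive computation produces an error term roughly of the form $\mathrm{dist}(x,z)/r+\tau_0(\delta\mid r)/\mathrm{dist}(x,z)$, which blows up for very small $\mathrm{dist}(x,z)$. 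The fix is to split into regimes --- for $\mathrm{dist}(x,z)$ of order $\sqrt{r\,\tau_0}$ or larger the direct law-of-cosines estimate suffices, while for very small $\mathrm{dist}(x,z)$ one uses monotonicity of $\tilde{\sphericalangle}(y_i,x,\cdot)$ along the geodesic from $x$ (which is non-increasing in the far endpoint's distance from $x$) to reduce to the previous regime.

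The second inequality follows from the first. Since $\Sigma_x$ has curvature $\geq 1$, every triangle in $\Sigma_x$ has perimeter $\leq 2\pi$; applied to $\uparrow_x^{y_1},\uparrow_x^{y_2},\uparrow_x^z$ together with $\sphericalangle(y_1,x,y_2)\geq\tilde{\sphericalangle}(y_1,x,y_2)>\pi-\delta$, this gives $\sphericalangle(y_1,x,z)+\sphericalangle(y_2,x,z)<\pi+\delta$. Combined with the comparison inequality $\sphericalangle(y_i,x,z)\geq\tilde{\sphericalangle}(y_i,x,z)$ and the first inequality of the lemma, each difference $\sphericalangle(y_i,x,z)-\tilde{\sphericalangle}(y_i,x,z)$ is sandwiched between $0$ and $\delta+\tau$, which is the second estimate.
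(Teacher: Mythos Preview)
The paper does not supply its own proof of this lemma; it simply quotes it as Lemma~5.6 of \cite{BGP}. So there is nothing in the paper to compare your argument against. That said, your outline is essentially the standard proof, and the upper bound (via the $(1{+}3)$--point comparison inequality) and the deduction of the second estimate from the first (via the perimeter bound in $\Sigma_x$ together with $\sphericalangle\geq\tilde\sphericalangle$) are both clean and correct.

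One caution about your treatment of the lower bound. Your diagnosis that a naive law--of--cosines estimate produces an error of order $\mathrm{dist}(x,z)/r+\tau_0(\delta\,|\,r)/\mathrm{dist}(x,z)$ is accurate, and this expression is genuinely \emph{not} a $\tau(\delta,\mathrm{dist}(x,z)\,|\,r)$ function (it blows up along $\mathrm{dist}(x,z)=\tau_0$, for instance). Your proposed monotonicity fix, however, tacitly requires extending the segment $xz$ past $z$ to reach the good regime $\mathrm{dist}(x,z)\sim\sqrt{r\tau_0}$, and geodesics in an Alexandrov space need not extend. You can repair this either by invoking the density of extendible directions in a strained neighborhood (as in Lemma~\ref{Dense Injectivity}) together with continuity of comparison angles, or---more in the spirit of \cite{BGP}---by using that $z$ itself lies in $B$ and is therefore also $(1,\delta,r)$--strained by $(y_1,y_2)$. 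The latter gives the additional near--additivity $\mathrm{dist}(y_1,z)+\mathrm{dist}(y_2,z)-\mathrm{dist}(y_1,y_2)<\tau_0(\delta\,|\,r)$, which combined with your configuration sharpens the estimate on $\mathrm{dist}(\tilde y_2,\tilde z)-\mathrm{dist}(y_2,z)$ enough to remove the $1/\mathrm{dist}(x,z)$ term.
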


\begin{corollary}
\label{Angle continuity}Let $B\subset X$ be $\left( 1,\delta ,r\right) $%
--strained by $\left( a,b\right) $. Let $\left\{ X^{\alpha }\right\}
_{\alpha =1}^{\infty }$ be a sequence of Alexandrov spaces with $\mathrm{curv%
}X^{\alpha }\geq k$ such that $X^{\alpha }\longrightarrow X.$ For $x,z\in B$%
, suppose that $a^{\alpha },b^{\alpha },x^{\alpha },z^{\alpha }\in X^{\alpha
}$ converge to $a,b,x,$ and $z$ respectively. Then 
\begin{equation*}
\left\vert \sphericalangle \left( a^{\alpha },x^{\alpha },z^{\alpha }\right)
-\sphericalangle \left( a,x,z\right) \right\vert <\tau \left( \delta ,%
\mathrm{dist}\left( x,z\right) ,\tau \left( 1/\alpha |\mathrm{dist}\left(
x,z\right) \right) \text{ }|\text{ }r\right) .
\end{equation*}
\end{corollary}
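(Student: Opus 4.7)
The plan is to apply the preceding BGP Lemma $5.6$ both in the limit space $X$ and in $X^{\alpha}$, and to use continuity of comparison angles under Gromov--Hausdorff convergence to bridge the two.

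First I would apply Lemma $5.6$ directly in $X$. Since $(a,b)$ is a $(1,\delta,r)$--strainer of $B$, and $x,z \in B$, the hypothesis of Lemma $5.6$ holds, yielding
\begin{equation*}
\bigl|\sphericalangle(a,x,z) - \tilde{\sphericalangle}(a,x,z)\bigr| < \tau\bigl(\delta, \mathrm{dist}(x,z) \,\big|\, r\bigr).
\end{equation*}

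Next I would transfer the strainer condition to $X^{\alpha}$. Because the $(1,\delta,r)$--strainer condition is a finite list of strict inequalities among comparison angles and distances involving only the quadruple $(a,b,x,z)$, and because comparison angles are continuous functions of the three pairwise distances (away from degenerate triangles), Gromov--Hausdorff convergence of $X^{\alpha} \to X$ together with convergence of $(a^{\alpha},b^{\alpha},x^{\alpha},z^{\alpha})$ to $(a,b,x,z)$ implies that for all sufficiently large $\alpha$ the pair $(a^{\alpha},b^{\alpha})$ is a $(1,\delta',r')$--strainer of a neighborhood of $x^{\alpha}$ containing $z^{\alpha}$, with
\begin{equation*}
\delta' = \delta + \tau(1/\alpha\,|\,r),\qquad r' = r - \tau(1/\alpha\,|\,r).
\end{equation*}
Applying Lemma $5.6$ in $X^{\alpha}$ at the point $x^{\alpha}$ with the auxiliary point $z^{\alpha}$ then gives
\begin{equation*}
\bigl|\sphericalangle(a^{\alpha},x^{\alpha},z^{\alpha}) - \tilde{\sphericalangle}(a^{\alpha},x^{\alpha},z^{\alpha})\bigr| < \tau\bigl(\delta', \mathrm{dist}(x^{\alpha},z^{\alpha})\,\big|\,r'\bigr).
\end{equation*}

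Finally I would bridge the two comparison angles. Since $a^{\alpha}\to a$, $x^{\alpha}\to x$, $z^{\alpha}\to z$ in the Gromov--Hausdorff sense, the three distances defining $\tilde{\sphericalangle}(a^{\alpha},x^{\alpha},z^{\alpha})$ converge to those defining $\tilde{\sphericalangle}(a,x,z)$ at rate $\tau(1/\alpha)$; provided $\mathrm{dist}(x,z)$ stays bounded away from $0$ so the comparison triangle is non-degenerate, continuity of the law of cosines in the appropriate model space yields
\begin{equation*}
\bigl|\tilde{\sphericalangle}(a^{\alpha},x^{\alpha},z^{\alpha}) - \tilde{\sphericalangle}(a,x,z)\bigr| < \tau\bigl(1/\alpha\,\big|\,\mathrm{dist}(x,z)\bigr).
\end{equation*}
Combining these three estimates with the triangle inequality produces the claimed bound.

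The only step that requires care is the transfer of the strainer condition and of the Lemma $5.6$ estimate to $X^{\alpha}$ with explicit control of the auxiliary parameters $\delta',r'$ in terms of $\delta,r,1/\alpha$; once this is in hand, the rest is bookkeeping. No nontrivial geometric input beyond Lemma $5.6$ and stability of comparison angles under GH convergence is needed.
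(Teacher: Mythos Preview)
Your proposal is correct and follows essentially the same approach as the paper: apply the preceding BGP Lemma~5.6 in both $X$ and $X^{\alpha}$ to replace angles by comparison angles, then use that comparison angles (being functions of distances only) converge under Gromov--Hausdorff convergence. The paper's proof is a two-sentence sketch of exactly this argument; your version simply makes the transfer of the strainer condition to $X^{\alpha}$ and the parameter bookkeeping explicit.
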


\begin{proof}
The convergence $X^{\alpha }\longrightarrow X$ implies that we have
convergence of the corresponding comparison angles. The result follows from
the previous lemma.
\end{proof}

\begin{lemma}
\label{angle convergence}Let $B\subset X$ be $(n,\delta ,r)$--strained by $%
\left\{ \left( a_{i},b_{i}\right) \right\} _{i=1}^{n}$. Let $\left\{
X^{\alpha }\right\} _{\alpha =1}^{\infty }$ have $\mathrm{curv}X^{\alpha
}\geq k$ and suppose that $X_{\alpha }\longrightarrow $ $X$. Let $\left\{
\left( \gamma _{1,\alpha },\gamma _{2,\alpha }\right) \right\} _{\alpha
=1}^{\infty }$ be a sequence of geodesic hinges in the $X^{\alpha }$ that
converge to a geodesic hinge $\left( \gamma _{1},\gamma _{2}\right) $ with
vertex in $B.$ Then 
\begin{equation*}
\left\vert \sphericalangle \left( \gamma _{1,\alpha }^{\prime }\left(
0\right) ,\gamma _{2,\alpha }^{\prime }\left( 0\right) \right)
-\sphericalangle \left( \gamma _{1}^{\prime }\left( 0\right) ,\gamma
_{2}^{\prime }\left( 0\right) \right) \right\vert <\tau \left( \delta ,\tau
\left( 1/\alpha |\mathrm{len}\left( \gamma _{1}\right) ,\mathrm{len}\left(
\gamma _{2}\right) \right) \text{ }|\text{ }r\right) .
\end{equation*}
\end{lemma}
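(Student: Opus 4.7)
\emph{Proof plan.} The approach is to reduce the assertion to Corollary \ref{Angle continuity} by recovering the hinge angle at each vertex from its angles with the $n$ strainer directions, a recovery afforded by Theorem \ref{BGP--SOY} applied to the space of directions.

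Let $p$ and $p_\alpha$ denote the vertices of $(\gamma_1, \gamma_2)$ and $(\gamma_{1,\alpha}, \gamma_{2,\alpha})$; hinge convergence gives $p_\alpha \to p$. Choose $a_i^\alpha \to a_i$ and $b_i^\alpha \to b_i$, so that for $\alpha$ large $p_\alpha$ is $(n,\delta+o(1), r+o(1))$-strained by $\{(a_i^\alpha, b_i^\alpha)\}_{i=1}^n$. Fix $t>0$ small enough that $z_j := \gamma_j(t) \in B$; by hinge convergence $z_{j,\alpha} := \gamma_{j,\alpha}(t) \in B^\alpha$ with $z_{j,\alpha} \to z_j$. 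Applying Corollary \ref{Angle continuity} to the one-pair strainer $(a_i^\alpha, b_i^\alpha)$ yields, for each $i$ and $j$,
\begin{equation*}
\bigl|\sphericalangle(a_i^\alpha, p_\alpha, z_{j,\alpha}) - \sphericalangle(a_i, p, z_j)\bigr| < \tau\bigl(\delta,\, \tau(1/\alpha \mid \mathrm{len}(\gamma_j)) \,\bigm|\, r\bigr),
\end{equation*}
and analogously for the $b_i$-angles. Since a hinge angle depends only on the initial directions, these equal $\sphericalangle(\uparrow_{p_\alpha}^{a_i^\alpha}, \gamma_{j,\alpha}^\prime(0))$ and $\sphericalangle(\uparrow_p^{a_i}, \gamma_j^\prime(0))$, so we have controlled the angles between the straining directions at the vertex and the hinge directions.

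Because $p$ is $(n,\delta,r)$-strained, the $(n-1)$-dimensional $\Sigma_p$ is globally $(n, \tau(\delta))$-strained by $\{(\Uparrow_p^{a_i}, \Uparrow_p^{b_i})\}$, and similarly $\Sigma_{p_\alpha}$ by $\{(\Uparrow_{p_\alpha}^{a_i^\alpha}, \Uparrow_{p_\alpha}^{b_i^\alpha})\}$ with converging strainer constants. Applying Theorem \ref{BGP--SOY} together with the explicit formula in the remark following it, the almost-isometries $\Psi : \Sigma_p \to S^{n-1}$ and $\Psi^\alpha : \Sigma_{p_\alpha} \to S^{n-1}$ satisfy
\begin{equation*}
\Psi(\gamma_j^\prime(0)) = c_j \bigl(\cos \sphericalangle(a_1, p, z_j),\, \ldots,\, \cos \sphericalangle(a_n, p, z_j)\bigr),
\end{equation*}
with $c_j$ the appropriate normalizing factor, and analogously for $\Psi^\alpha(\gamma_{j,\alpha}^\prime(0))$. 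Because $\Psi$ and $\Psi^\alpha$ are bi-Lipschitz with constants within $\tau(\delta)$ of $1$, they almost preserve distances in $\Sigma_p$ and $\Sigma_{p_\alpha}$ (which are precisely the angles), so
\begin{equation*}
\bigl|\sphericalangle(\gamma_1^\prime(0), \gamma_2^\prime(0)) - \sphericalangle_{S^{n-1}}\bigl(\Psi(\gamma_1^\prime(0)), \Psi(\gamma_2^\prime(0))\bigr)\bigr| < \tau(\delta),
\end{equation*}
and the analogous bound holds on the $\alpha$-side. Combining the coordinate convergence from the previous paragraph with the continuity of the $S^{n-1}$-angle in its coordinates then yields the claimed estimate.

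\emph{Main obstacle.} The delicate step is the uniformity of $\Psi^\alpha$ as $\alpha \to \infty$: one must verify that the strainer $\{(\Uparrow_{p_\alpha}^{a_i^\alpha}, \Uparrow_{p_\alpha}^{b_i^\alpha})\}$ globally strains $\Sigma_{p_\alpha}$ with constants converging to those of $\Sigma_p$, so that Theorem \ref{BGP--SOY} produces $\Psi^\alpha$ with the same $\tau(\delta)$-distortion as $\Psi$. This follows from Corollary \ref{Angle continuity} applied to the pairwise angles among $\{a_i^\alpha, b_i^\alpha\}$ at $p_\alpha$, but must be organized carefully so that the nested $\tau$'s close up to the form in the statement.
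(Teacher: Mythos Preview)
Your proposal is correct and follows essentially the same route as the paper: apply Corollary~\ref{Angle continuity} to control the angles between each hinge direction and the strainer directions, then invoke Theorem~\ref{BGP--SOY} on the spaces of directions to recover the hinge angle from those data. The paper's proof is terser---it simply says ``the result then follows from Theorem~\ref{BGP--SOY}'' after establishing the strainer-angle convergence---whereas you spell out the mechanism via the explicit formula for $\Psi$ and flag the uniformity of $\Psi^\alpha$; but the underlying argument is the same.
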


\begin{remark}
Note that without the strainer, $\lim \inf_{\alpha \rightarrow \infty
}\sphericalangle \left( \gamma _{1,\alpha }^{\prime }\left( 0\right) ,\gamma
_{2,\alpha }^{\prime }\left( 0\right) \right) \geq \sphericalangle \left(
\gamma_1 ^{\prime }\left( 0\right) ,\gamma_2 ^{\prime }\left( 0\right)
\right) $ \cite{GrovPet2}, \cite{BGP}.
\end{remark}

\begin{proof}
Apply the previous corollary with $x^{\alpha }=\gamma _{1,\alpha }\left(
0\right) ,$ $z^{\alpha }=\gamma _{1,\alpha }\left( \varepsilon \right) ,$ $%
x^{\alpha}\rightarrow x,$ and $z^{\alpha}\rightarrow z$ to conclude 
\begin{equation*}
\left\vert \sphericalangle (\Uparrow _{x^{\alpha }}^{a_{i}^{\alpha }},\gamma
_{1,\alpha }^{\prime }\left( 0\right) )-\sphericalangle (\Uparrow
_{x}^{a_{i}},\gamma _{1}^{\prime }\left( 0\right) )\right\vert <\tau \left(
\delta ,\mathrm{dist}\left( x,z\right) ,\tau \left( 1/\alpha |\mathrm{dist}%
\left( x,z\right) \right) \text{ }|\text{ }r\right) .
\end{equation*}%
Similar reasoning with $x^{\alpha }=\gamma _{2,\alpha }\left( 0\right) ,$ $%
z^{\alpha }=\gamma _{2,\alpha }\left( \varepsilon \right) ,$ $x=\lim_{\alpha
\rightarrow \infty }x^{\alpha },$ and $z=\lim_{\alpha \rightarrow \infty
}z^{\alpha }$ gives 
\begin{equation*}
\left\vert \sphericalangle (\Uparrow _{x^{\alpha }}^{a_{i}^{\alpha }},\gamma
_{2,\alpha }^{\prime }\left( 0\right) )-\sphericalangle (\Uparrow
_{x}^{a_{i}},\gamma _{2}^{\prime }\left( 0\right) )\right\vert <\tau \left(
\delta ,\mathrm{dist}\left( x,z\right) ,\tau \left( 1/\alpha |\mathrm{dist}%
\left( x,z\right) \right) \text{ }|\text{ }r\right) .
\end{equation*}

Since $\mathrm{dist}\left( x,z\right) $ may be as small as we please, the
result then follows from Theorem \ref{BGP--SOY}.
\end{proof}

\begin{lemma}
\label{Dense Injectivity}(\cite{Yam2} Lemma 1.8.2) Let $\left\{ \left(
a_{i},b_{i}\right) \right\} _{i=1}^{n}$ be an $\left( n,\delta ,r\right) $%
--strainer for $B\subset X.$ For any $x\in B$ and $\mu >0,$ let $\Sigma
_{x}^{\mu }$ be the set of directions $v\in \Sigma _{x}$ so that $\gamma
_{v}|_{\left[ 0,\mu \right] }$ is a segment. For any sufficiently small $\mu
>0,$ $\Sigma _{x}^{\mu }$ is $\tau \left( \delta ,\mu \right) $--dense in $%
\Sigma _{x}.$
\end{lemma}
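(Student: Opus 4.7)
The plan is to show that for any $w \in \Sigma_x$ there is some $v \in \Sigma_x^\mu$ with $\sphericalangle(v, w) < \tau(\delta, \mu)$. The key observation is that whenever $y \in X$ satisfies $\mathrm{dist}(x, y) \geq \mu$, the restriction of any segment $xy$ to $[0, \mu]$ is itself a segment, so $\uparrow_x^y$ lies in $\Sigma_x^\mu$ automatically. Hence it suffices to construct, for each given $w$, a point $y$ at distance slightly exceeding $\mu$ from $x$ whose direction $\uparrow_x^y$ is $\tau(\delta, \mu)$-close to $w$.

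To build $y$, I would fix $\mu' = (1 + \varepsilon)\mu$ with $\varepsilon > 0$ small and $\mu' < r$, and apply Theorem \ref{BGP coordinates} to obtain the bi-Lipschitz strainer chart $f : B(x, \rho) \to \mathbb{R}^n$ with constants in $(1 - \tau, 1 + \tau)$ on a ball of radius $\rho > (1+\tau)\mu'$. Since $f$ is a continuous injection from the strained ball---a topological $n$-manifold---into $\mathbb{R}^n$, invariance of domain shows $f(B(x, \rho))$ is open, and the two-sided bi-Lipschitz control forces it to contain the Euclidean ball of radius $(1-\tau)\rho$ about $f(x)$. In parallel, Theorem \ref{BGP--SOY} applied to the globally strained $\Sigma_x$ produces a bi-Lipschitz map $\Psi : \Sigma_x \to S^{n-1}$ with constants near $1$. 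Setting $\tilde w := \Psi(w)$, I take $y := f^{-1}\bigl(f(x) - \mu' \tilde w\bigr)$; the minus sign reflects that $\mathrm{dist}(a_i, \cdot)$ decreases in the direction of $a_i$. The bi-Lipschitz bound on $f$ then gives $\mathrm{dist}(x, y) \in \bigl((1-\tau)\mu', (1+\tau)\mu'\bigr)$, which exceeds $\mu$ once $\varepsilon$ is chosen to dominate $\tau$, so $v := \uparrow_x^y \in \Sigma_x^\mu$.

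The main obstacle is the angle estimate $\sphericalangle(v, w) < \tau(\delta, \mu)$. For each strainer $a_i$, the first variation formula in its strained version (Lemma 5.6 of \cite{BGP}, as recalled in the excerpt) yields
\[
\mathrm{dist}(a_i, y) - \mathrm{dist}(a_i, x) = -\cos \sphericalangle(a_i, x, y) \cdot \mathrm{dist}(x, y)
\]
up to an error bounded by $\tau(\delta, \mu) \cdot \mathrm{dist}(x, y)$, while by construction this same quantity equals $-\mu' \langle \tilde w, e_i \rangle$. Combining these with the explicit formula for $\Psi$ from the Remark after Theorem \ref{BGP--SOY}, whose $i$-th coordinate is (up to normalization) $\cos \mathrm{dist}_{\Sigma_x}(\Uparrow_x^{a_i}, v) = \cos \sphericalangle(a_i, x, y)$, one obtains $|\Psi(v) - \tilde w| < \tau(\delta, \mu)$ once the normalizations are matched. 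The bi-Lipschitz control on $\Psi$ finally gives $\sphericalangle(v, w) < \tau(\delta, \mu)$. The delicate point is linking the two distinct bi-Lipschitz structures---that of $f$ on $B(x, \rho)$ and that of $\Psi$ on $\Sigma_x$---through the first variation formula; once this bridge is built, tracking the nested $\tau$-errors is routine strainer calculus.
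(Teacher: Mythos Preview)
The paper does not supply its own proof of this lemma; it is quoted from \cite{Yam2} without argument, so there is nothing in the paper to compare your proposal against.

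Your argument is the standard one and is correct in outline: pull back a point at Euclidean distance $\mu'$ in the strainer chart, then use first variation together with the explicit description of $\Psi$ to show its direction is $\tau(\delta,\mu)$--close to the prescribed $w$. One small wrinkle: your appeal to invariance of domain to conclude that $f(B(x,\rho))$ is open presupposes that the strained ball is already known to be a topological $n$--manifold, which is circular if that very fact is being extracted from the strainer chart. This is harmless in practice, since the proof of Theorem~9.4 in \cite{BGP} shows directly (via a surjectivity argument, not invariance of domain) that the strainer map is open, and hence its image contains the needed Euclidean ball. With that adjustment, the bridge you build between the chart $f$ and the sphere map $\Psi$ through the first--variation identity is exactly the right mechanism, and the nested $\tau$--bookkeeping is routine.
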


\begin{corollary}
\label{convergence of vectors}Suppose $X^{\alpha }\longrightarrow X$, $%
\left\{ \left( a_{i},b_{i}\right) \right\} _{i=1}^{n}$ is an $\left(
n,\delta ,r\right) $--strainer for $B\subset X,$ and $\left( n,\delta
,r\right) $--strainers $\left\{ \left( a_{i}^{\alpha },b_{i}^{\alpha
}\right) \right\} _{i=1}^{n}$ for $B^{\alpha }\subset X^{\alpha }$ satisfy 
\begin{equation*}
\left( \left\{ \left( a_{i}^{\alpha },b_{i}^{\alpha }\right) \right\}
_{i=1}^{n},B^{\alpha }\right) \longrightarrow \left( \left\{ \left(
a_{i},b_{i}\right) \right\} _{i=1}^{n},B\right) .
\end{equation*}%
For any fixed $\mu >0$ and any sequence of directions $\left\{ v^{\alpha
}\right\} _{a=1}^{\infty }\subset \Sigma _{x^{\alpha }}$ with $x^{\alpha
}\in B^{\alpha },$ there is a sequence $\left\{ w^{\alpha }\right\}
_{a=1}^{\infty }\subset \Sigma _{x^{\alpha }}^{\mu }$ with 
\begin{equation*}
\sphericalangle \left( w^{\alpha },v^{\alpha }\right) <\tau \left( \delta
,\mu \right)
\end{equation*}%
so that a subsequence of $\left\{ \gamma _{w^{\alpha }}\right\} _{\alpha
=1}^{\infty }$ converges to a geodesic $\gamma :\left[ 0,\mu \right]
\longrightarrow X.$
\end{corollary}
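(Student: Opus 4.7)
The plan is to first use the strainer to replace each $v^\alpha$ by a nearby direction that is guaranteed to be the initial tangent of a length-$\mu$ minimizing segment, and then to extract a convergent subsequence of these segments by an Arzel\`a--Ascoli argument in the Gromov--Hausdorff setting.

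First, I apply Lemma \ref{Dense Injectivity} inside each $X^\alpha$. Because the strainers $\{(a_i^\alpha,b_i^\alpha)\}_{i=1}^n$ have the same parameters $(n,\delta,r)$ as in $X$ and $\mu$ is fixed and sufficiently small, the set $\Sigma_{x^\alpha}^\mu$ is $\tau(\delta,\mu)$-dense in $\Sigma_{x^\alpha}$ with a density function that does not depend on $\alpha$. I therefore choose $w^\alpha \in \Sigma_{x^\alpha}^\mu$ with $\sphericalangle(w^\alpha,v^\alpha) < \tau(\delta,\mu)$, which already gives the required angle estimate and yields a unit-speed minimizing segment $\gamma_{w^\alpha}\colon [0,\mu]\longrightarrow X^\alpha$.

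Next I pass to a subsequence so that $x^\alpha \to x$ (possible because $B^\alpha \to B$ is precompact). Fix Gromov--Hausdorff $\varepsilon_\alpha$-approximations $\phi_\alpha\colon X^\alpha \to X$ with $\varepsilon_\alpha \to 0$. The compositions $\phi_\alpha \circ \gamma_{w^\alpha}$ are $1$-Lipschitz up to error $\varepsilon_\alpha$ and stay inside a fixed compact neighborhood of $x$. A standard diagonal Arzel\`a--Ascoli extraction then produces a further subsequence converging uniformly to a $1$-Lipschitz curve $\gamma\colon [0,\mu]\longrightarrow X$ with $\gamma(0)=x$. To verify that $\gamma$ is minimizing I use that each $\gamma_{w^\alpha}$ realizes distance in $X^\alpha$, whence
\[
 d_X\bigl(\gamma(0),\gamma(\mu)\bigr) \;=\; \lim_{\alpha\to\infty} d_{X^\alpha}\bigl(\gamma_{w^\alpha}(0),\gamma_{w^\alpha}(\mu)\bigr) \;=\; \mu,
\]
while $\mathrm{length}(\gamma)\leq \mu$ by the $1$-Lipschitz bound. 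Equality of length and endpoint-distance forces $\gamma$ to be a unit-speed segment of length $\mu$.

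The only subtle point is the uniformity in $\alpha$ of the density estimate in the first step: Lemma \ref{Dense Injectivity} must apply to every $X^\alpha$ with the same smallness threshold on $\mu$ and the same density function $\tau(\delta,\mu)$. This is immediate from the hypotheses because the strained neighborhoods $B^\alpha \subset X^\alpha$ share the parameters $(n,\delta,r)$ with $B\subset X$ and the strainers converge. Beyond this, the remainder is routine Gromov--Hausdorff compactness, valid in any sequence of length spaces with a uniform lower curvature bound, and uses nothing more than lower semicontinuity of length together with convergence of distances.
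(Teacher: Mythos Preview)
Your proof is correct and matches the paper's intended argument: the paper states this as an unproved corollary immediately following Lemma \ref{Dense Injectivity}, and your two steps---apply the Dense Injectivity Lemma uniformly in $\alpha$ to produce the $w^\alpha$, then run Arzel\`a--Ascoli/Gromov--Hausdorff compactness on the segments $\gamma_{w^\alpha}$---are exactly what is implicit. Your remark that the uniformity of $\tau(\delta,\mu)$ across $\alpha$ is the only nontrivial point, and that it follows because the strainer parameters $(n,\delta,r)$ are shared, is a useful clarification the paper leaves to the reader.
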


From Arzela-Ascoli and Hopf-Rinow, we conclude

\begin{proposition}
\label{geodesic continuity}Let $X$ be an Alexandrov space and $p,q\in X.$
For any $\varepsilon >0,$ there is a $\delta >0$ so that for all $x\in
B\left( p,\delta \right) $ and all $y\in B\left( q,\delta \right) $ and any
segment $xy,$ there is a segment $pq$ so that 
\begin{equation*}
\mathrm{dist}\left( xy,pq\right) <\varepsilon .
\end{equation*}
\end{proposition}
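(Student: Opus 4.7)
The plan is to argue by contradiction, converting the question into a compactness statement for unit-speed curves and then invoking Arzela-Ascoli together with the lower semi-continuity of length on an Alexandrov space. Concretely, suppose the conclusion fails for some $\varepsilon>0$. Then there exist sequences $x_{n}\to p$, $y_{n}\to q$ and segments $\sigma_{n}$ from $x_{n}$ to $y_{n}$ such that for every segment $\sigma$ from $p$ to $q$ we have $\mathrm{dist}(\sigma_{n},\sigma)\geq\varepsilon$, where $\mathrm{dist}$ denotes Hausdorff distance between the images (or the infimum Hausdorff distance taken over admissible parametrizations).

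Next, I would parametrize each $\sigma_{n}$ by arclength on $[0,L_{n}]$ with $L_{n}=\mathrm{dist}(x_{n},y_{n})\to L:=\mathrm{dist}(p,q)$. Reparametrizing by $t\mapsto\sigma_{n}(L_{n}t/L)$ gives $L_{n}/L$-Lipschitz maps $\tilde{\sigma}_{n}\colon[0,L]\to X$ with $\tilde{\sigma}_{n}(0)=x_{n}$ and $\tilde{\sigma}_{n}(L)=y_{n}$, and for large $n$ all of their images lie in the compact set $\overline{B}(p,L+1)$. Arzela-Ascoli then produces a subsequence converging uniformly to a $1$-Lipschitz curve $\sigma\colon[0,L]\to X$ with $\sigma(0)=p$ and $\sigma(L)=q$. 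Because $\sigma$ is $1$-Lipschitz, its length is at most $L$; since it already joins $p$ to $q$ the triangle inequality forces $\mathrm{length}(\sigma)=L$, so $\sigma$ is a segment from $p$ to $q$. Hopf--Rinow in the Alexandrov setting guarantees such a segment exists at all, but the limit curve produces one concretely.

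Finally, uniform convergence of the parametrizations $\tilde{\sigma}_{n}\to\sigma$ implies Hausdorff convergence of the images, so $\mathrm{dist}(\sigma_{n},\sigma)\to 0$. This contradicts the standing assumption and completes the proof.

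The only mildly delicate point is the interpretation of $\mathrm{dist}(xy,pq)$: one should make clear at the outset that this is the Hausdorff distance between the images (equivalently, the infimum over parametrizations of the supremum distance), since otherwise comparing curves of slightly different lengths requires a reparametrization step. Everything else is routine: compactness of the ambient space of candidate segments, $1$-Lipschitz control, and lower semi-continuity of length force the Arzela-Ascoli limit to be a segment from $p$ to $q$, which is exactly the segment the original sequence was supposed to avoid.
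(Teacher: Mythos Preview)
Your argument is correct and is precisely the approach the paper indicates: the paper does not spell out a proof but simply states that the proposition follows ``from Arzela--Ascoli and Hopf--Rinow,'' and your contradiction argument---extracting a uniformly convergent subsequence of reparametrized segments via Arzela--Ascoli (using properness of the Alexandrov space, i.e.\ Hopf--Rinow) and showing the $1$-Lipschitz limit is a segment from $p$ to $q$---is exactly the standard way to unpack that remark.
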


We end this section by showing that convergence to a compact Alexandrov
space $X$ without collapse implies the convergence of the corresponding
universal covers, provided $\left\vert \pi _{1}\left( X\right) \right\vert
<\infty .$ For our purposes, when $X=C_{k,r}^{n}$, it would be enough to use 
\cite{SorWei} or \cite{FukYam}.

The key tools are Perelman's Stability and Local Structure Theorems and the
notion of first systole, which is the length of the shortest closed
non-contractible curve. Perelman's proof of the Local Structure Theorem can
be found in \cite{Perel}, this result is also a corollary to his Stability
Theorem, whose proof is published in \cite{Kap}.

\begin{theorem}
\label{covers converge}Let $\left\{ X_{i}\right\} _{i=1}^{\infty }$ be a
sequence of $n$--dimensional Alexandrov spaces with a uniform lower
curvature bound converging to a compact, $n$--dimensional Alexandrov space $%
X.$ If the fundamental group of $X$ is finite, then

\begin{description}
\item[1] A subsequence of the universal covers, $\{\tilde{X}%
_{i}\}_{i=1}^{\infty },$ of $\left\{ X_{i}\right\} _{i=1}^{\infty }$%
converges to the universal cover, $\tilde{X},$ of $X$.

\item[2] A subsequence of the deck action by $\pi _{1}\left( X_{i}\right) $
on $\{ \tilde{X}_{i}\} _{i=1}^{\infty }$ converges to the deck action of $%
\pi _{1}\left( X\right) $ on $\tilde{X}.$
\end{description}
\end{theorem}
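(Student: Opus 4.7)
The plan is to combine Gromov's precompactness theorem with Perelman's Topological Stability Theorem, applied first downstairs and then upstairs to the universal covers.

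First I would use the Topological Stability Theorem to conclude that $X_i$ is homeomorphic to $X$ for all sufficiently large $i$, so $\pi_1(X_i)\cong \pi_1(X)=:G$ is a fixed finite group of some order $N$. If $F_i\subset \tilde X_i$ is a fundamental domain for the deck action, then $\tilde X_i=\bigcup_{g\in G} g\cdot F_i$ with $\mathrm{diam}(F_i)\leq \mathrm{diam}(X_i)$; stringing together adjacent translates yields the uniform diameter bound $\mathrm{diam}(\tilde X_i)\leq N\cdot \mathrm{diam}(X_i)$. Since deck transformations act isometrically, $\tilde X_i$ inherits the same lower curvature bound as $X_i$, so by Gromov's precompactness theorem a subsequence converges in the Gromov--Hausdorff sense to some compact Alexandrov space $Y$.

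Next I would promote this to equivariant convergence. The deck actions $\rho_i:G\to \mathrm{Isom}(\tilde X_i)$ are uniformly equicontinuous (they are isometries), so an Arzela--Ascoli argument in the equivariant Gromov--Hausdorff framework of \cite{FukYam} allows one to pass to a further subsequence along which $\rho_i$ converges to an isometric action $\rho:G\to \mathrm{Isom}(Y)$. The quotient maps $\tilde X_i\to X_i$ then pass to the limit as a submetry $Y\to Y/\rho(G)$, and $Y/\rho(G)$ is isometric to $X$.

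It remains to identify $Y$ with $\tilde X$, i.e.\ to show $Y$ is simply connected. Since $\mathrm{vol}(\tilde X_i)=N\cdot \mathrm{vol}(X_i)$ and the hypothesis $\dim X=n$ forces $\mathrm{vol}(X_i)\to \mathrm{vol}(X)>0$, the sequence $\{\tilde X_i\}$ does not collapse, so $\dim Y=n$. Applying the Topological Stability Theorem now to $\tilde X_i\to Y$ shows that $\tilde X_i$ is homeomorphic to $Y$ for large $i$; since each $\tilde X_i$ is simply connected, so is $Y$. Hence $Y\to X$ is the universal cover, proving (1), and the limit action $\rho$ is the deck action, proving (2). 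The main obstacle is the simple connectivity of $Y$, since \emph{a priori} the limit could acquire extra $\pi_1$; this is unlocked by the noncollapse of $\tilde X_i$, which follows from the product formula $\mathrm{vol}(\tilde X_i)=N\cdot \mathrm{vol}(X_i)$ together with the uniform bound $N=|\pi_1(X)|$, and which feeds Perelman's Stability Theorem upstairs.
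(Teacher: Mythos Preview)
Your approach follows the same arc as the paper's: Stability downstairs identifies $\pi_1(X_i)$ with the fixed finite group $G$, Gromov precompactness plus Fukaya--Yamaguchi equivariant convergence yield $(\tilde X_i, G)\to (Y,G)$ with $Y/G\cong X$, and Stability upstairs shows $Y$ is simply connected. You also make explicit the diameter bound $\mathrm{diam}(\tilde X_i)\le N\cdot\mathrm{diam}(X_i)$ and the volume non-collapse of $\tilde X_i$, both of which the paper leaves implicit.

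There is, however, a genuine gap in the last step. From ``$Y$ is simply connected and $Y/\rho(G)=X$'' you cannot yet conclude that $Y\to X$ is the universal covering: you must first know that the limit action $\rho$ is \emph{free}, and nothing in your argument rules out a nontrivial $g\in G$ acquiring a fixed point in $Y$. That happens precisely when the minimal displacement of $g$ on $\tilde X_i$---equivalently, the length of the shortest noncontractible loop in $X_i$ in the class of $g$---tends to zero. Your volume argument gives non-collapse of $\tilde X_i$ but does not control displacements, since fixed-point sets have measure zero and so are invisible to the identity $\mathrm{vol}(Y)=|G|\cdot\mathrm{vol}(X)$. The paper closes this gap with a systole bound: Perelman's local contractibility furnishes a finite cover of $X$ by contractible open sets with Lebesgue number $\mu$, and the Stability homeomorphisms $h_i:X\to X_i$ (which are simultaneously Hausdorff approximations) transport this cover to $X_i$, so any loop in $X_i$ of length below roughly $\mu/2$ lies in a contractible set and is null-homotopic. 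Hence the deck displacements on $\tilde X_i$ are uniformly bounded below, the limit $G$-action on $Y$ is properly discontinuous (hence free, $G$ being finite), and $Y\to X$ is an honest covering map. Once you insert this displacement bound your argument is complete and matches the paper's.
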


\begin{proof}
In \cite{Perel}, Perelman shows $X$ is locally contractible. Let $\left\{
U_{j}\right\} _{j=1}^{n}$ be an open cover of $X$ by contractible sets and
let $\mu $ be a Lebesgue number of this cover. By Perelman's Stability
Theorem, there are $\tau \left( \frac{1}{i}\right) $--Hausdorff
approximations 
\begin{equation*}
h_{i}:X\longrightarrow X_{i}
\end{equation*}%
that are also homeomorphisms. Therefore, if $i$ is sufficiently large, $%
\left\{ h_{i}\left( U_{j}\right) \right\} _{j=1}^{n}$ is an open cover for $%
X_{i}$ by contractible sets with Lebesgue number $\mu /2$. It follows that
the first systoles of the $X_{i}$s are uniformly bounded from below by $\mu
. $ Since the minimal displacement of the deck transformations by $\pi
_{1}\left( X_{i}\right) $ on $\tilde{X}_{i}\longrightarrow X_{i}$ is equal
to the first systole of $X_{i}$, this displacement is also uniformly bounded
from below by $\mu .$ By precompactness, a subsequence of $\{\tilde{X}_{i}\}$
converges to a length space $Y.$ From Proposition 3.6 of \cite{FukYam}, a
subsequence of the actions $\left( \tilde{X}_{i},\pi _{1}\left( X_{i}\right)
\right) $ converges to an isometric action by some group $G$ on $Y.$ By
Theorem 2.1 in \cite{Fuk}, $X=Y/G.$ Since the displacements of the
(nontrivial) deck transformations by $\pi _{1}\left( X_{i}\right) $ on $%
\tilde{X}_{i}\longrightarrow X_{i}$ are uniformly bounded from below, the
action by $G$ on $Y$ is properly discontinuous. Hence $Y\longrightarrow
Y/G=X $ is a covering space of $X$. By the Stability Theorem, $Y$ is simply
connected, so $Y$ is the universal cover of $X.$
\end{proof}

\begin{remark}
When the $X_{i}$ are Riemannian manifolds, one can get the uniform lower
bound for the systoles of the $X_{i}$s from the generalized Butterfly Lemma
in \cite{GrovPet1}. The same argument also works in the Alexandrov case but
requires Perelman's critical point theory, and hence is no simpler than what
we presented above.
\end{remark}

Lens spaces show that without the noncollapsing hypothesis this result is
false even in constant curvature.

%
%

\section{Cross Cap Stability\label{Cross Cap Stab}}

The main step to prove Theorem \ref{Cross Cap Stability} is the following.

\begin{theorem}
\label{cross cap embedding}Let $\left\{ M^{\alpha }\right\} _{\alpha
=1}^{\infty }$ be a sequence of closed Riemannian $n$--manifolds with $%
\mathrm{sec}$ $M^{\alpha }\geq k$ so that 
\begin{equation*}
M^{\alpha }\longrightarrow C_{k,r}^{n}
\end{equation*}%
in the Gromov-Hausdorff topology. Let $\tilde{M}^{\alpha }$ be the universal
cover of $M^{\alpha }.$ Then for all but finitely many $\alpha ,$ there is a 
$C^{1}$ embedding 
\begin{equation*}
\tilde{M}^{\alpha }\hookrightarrow \mathbb{R}^{n+1}\setminus \left\{
0\right\}
\end{equation*}%
that is equivariant with respect to the deck transformations of $\tilde{M}%
^{\alpha }\longrightarrow M^{\alpha }$ and the $Z_{2}$--action on $R^{n+1}$
generated by $-id.${\Large \ }
\end{theorem}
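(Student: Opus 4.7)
The plan is to construct the desired equivariant $C^1$ embedding as a (smoothed) vector of distance-difference functions to $n+1$ pairs of points related by the deck involution. By Theorem \ref{covers converge}, after passing to a subsequence I may assume the universal covers $\tilde M^{\alpha}$ converge equivariantly to $\tilde C := \mathbb{D}_k^n(r)$, with deck involutions $A^{\alpha}$ converging to the antipodal involution $A$ from Example \ref{crosscapexample}.

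In the limit $\tilde C$, I would first pin down the target by choosing $n+1$ points $\tilde{a}_0,\tilde{a}_1,\dots,\tilde{a}_n\in\mathcal{D}_k^n(r)^+$, with $\tilde{a}_0=e_0$ and the remaining $\tilde{a}_i$ chosen generically close to $e_0$ so that at every $z\in\tilde C$ some $n$-subset of the pairs $\{(\tilde{a}_i,A(\tilde{a}_i))\}_{i=0}^n$ is an $(n,\delta,r_0)$-strainer at $z$ in the sense of the strainer definition. Since crosscaps are globally strained, such a choice exists. Set
\begin{equation*}
\Phi:\tilde C\longrightarrow\mathbb{R}^{n+1},\qquad \Phi(z)_i=\mathrm{dist}(z,\tilde{a}_i)-\mathrm{dist}(z,A(\tilde{a}_i)).
\end{equation*}
Because $A$ is an isometry swapping $\tilde{a}_i$ and $A(\tilde{a}_i)$, the identity $\Phi\circ A=-\Phi$ is automatic, so $\Phi$ intertwines $A$ with $-\mathrm{id}$ on $\mathbb{R}^{n+1}$. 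The equation $\Phi(z)=0$ forces $z$ to lie on the common perpendicular bisector of every pair $(\tilde{a}_i,A(\tilde{a}_i))$; a sufficiently generic choice makes this impossible since $A$ is fixed-point free, so $\Phi$ lands in $\mathbb{R}^{n+1}\setminus\{0\}$.

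To transfer to $\tilde M^{\alpha}$, I would pick $\tilde{a}_i^{\alpha}\to\tilde{a}_i$ and set $\tilde{b}_i^{\alpha}:=A^{\alpha}(\tilde{a}_i^{\alpha})$, then define
\begin{equation*}
\Phi^{\alpha}(z)_i=d^{\eta}_{i,\tilde{a}_i^{\alpha}}(z)-d^{\eta}_{i,\tilde{b}_i^{\alpha}}(z),
\end{equation*}
where $d^{\eta}$ is the Otsu--Shioya averaged distance from (\ref{Otsu-Shioya}). Because $A^{\alpha}$ is an isometry, a change of variable shows $d^{\eta}_{i,A^{\alpha}(p)}(A^{\alpha}(z))=d^{\eta}_{i,p}(z)$, so $\Phi^{\alpha}\circ A^{\alpha}=-\Phi^{\alpha}$ holds exactly. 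The map $\Phi^{\alpha}$ is $C^1$ in the Otsu--Shioya differentiable structure. Corollary \ref{convergence of vectors} together with Lemma \ref{angle convergence} give uniform convergence $\Phi^{\alpha}\to\Phi$ and convergence of differentials in strainer charts, so for large $\alpha$ the image of $\Phi^{\alpha}$ avoids $0$ and $d\Phi^{\alpha}$ has rank $n$ everywhere (i.e.\ $\Phi^{\alpha}$ is a $C^1$ immersion).

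The main obstacle is promoting this immersion to a \emph{globally injective} embedding. The strainer results (Theorems \ref{BGP coordinates} and \ref{BGP--SOY}) give only local bi-Lipschitz coordinates in a single $(n,\delta,r)$-strained neighborhood, so the $n+1$ pairs must be engineered to distinguish points simultaneously across all of $\tilde C$. Here the extra target dimension is essential: the coordinate $\Phi_0$ built from $(e_0,-e_0)$ behaves as a signed distance to the seam $\partial\mathcal{D}_k^n(r)^{\pm}$ (in constant curvature, an explicit computation gives $\Phi_0(z)=\pm(2\,\mathrm{dist}(z,\text{nearest center})-2r)$ with sign recording the half), while the remaining $n$ coordinates, assembled from strainer pairs, resolve position within each half. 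Once global injectivity of $\Phi$ is established, Perelman's Stability Theorem furnishes a homeomorphism $\tilde M^{\alpha}\to\tilde C$ under which $\Phi^{\alpha}$ corresponds to a uniform approximation of the bi-Lipschitz embedding $\Phi$; hence $\Phi^{\alpha}$ is itself a homeomorphism onto its image for large $\alpha$, yielding the desired equivariant $C^1$ embedding.
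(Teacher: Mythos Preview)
Your overall architecture matches the paper's: build a $\mathbb{Z}_2$--equivariant map from differences of distances to antipodal pairs, check that the model map on $\mathbb{D}_k^n(r)$ is an embedding, lift via Otsu--Shioya averaging, and argue the lift is an embedding. But two steps fail as written.

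First, placing the $\tilde a_i$ ``generically close to $e_0$'' destroys both your strainer claim and the immersion. If all the $\tilde a_i$ cluster near $e_0$, then at any $z$ the directions $\uparrow_z^{\tilde a_i}$ are nearly parallel, so no $n$--subset of $\{(\tilde a_i,A(\tilde a_i))\}$ can satisfy the strainer angle conditions $\tilde\sphericalangle(\tilde a_i,z,\tilde a_j)>\tfrac{\pi}{2}-\delta$, and all coordinates $\Phi_i$ are within $\tau(\mathrm{dist}(\tilde a_i,e_0))$ of $\Phi_0$, so $d\Phi$ is nearly rank~$1$, not rank~$n$. The paper instead puts $p_1,\dots,p_n$ \emph{on the seam} $\mathcal S$ in orthogonal coordinate directions, and---this is the idea you are missing---replaces the raw difference $\mathrm{dist}(z,\tilde a_i)-\mathrm{dist}(z,A(\tilde a_i))$ by $f_{p_i}(x)=h_k\circ\mathrm{dist}(A(p_i),x)-h_k\circ\mathrm{dist}(p_i,x)$ with $h_k(t)$ equal to $\tfrac{1}{2\sinh r}\cosh t$, $\tfrac{t^2}{4r}$, or $\tfrac{1}{2\sin r}\cos t$ according to $k$. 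With this normalization $f_1,\dots,f_n$ are \emph{literally} the last $n$ ambient coordinate functions restricted to $\mathcal D_k^n(r)^{\pm}$, so $(f_1,\dots,f_n)$ is the identity on each half and global injectivity of the model map is immediate once $f_0$ separates the two halves. Your sentence ``the remaining $n$ coordinates \dots\ resolve position within each half'' is exactly the claim that needs this trick; with raw distance differences and clustered base points it is false.

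Second, even granting that $\Phi^\alpha$ is an immersion, your last paragraph does not yield injectivity. Gromov--Hausdorff convergence of $\Phi^\alpha$ to an embedding only gives injectivity in the large (for pairs at distance $>\nu$). For nearby points one needs a \emph{uniform} local injectivity radius, and rank~$n$ at each point does not supply one; you must also bound the oscillation of $d\Phi^\alpha$ in a fixed chart. The paper handles this via a quantitative inverse function theorem together with a separate equicontinuity estimate for the differentials (Part~2 of its Key Lemma), which requires a cut--locus measure argument and Proposition~\ref{geodesic continuity}. Your appeal to ``convergence of differentials in strainer charts'' and to Perelman stability does not substitute for this step.
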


Two and three manifolds have unique differential structures up to
diffeomorphism; so in dimensions two and three Theorems \ref{Cross Cap
Stability} and \ref{cross cap embedding} follow from the main result of \cite%
{GrovPet3}. We give the proof in dimension 4 in section 6. Until then, we
assume that $n\geq 5.$

\begin{proof}[Proof of Theorem \protect\ref{Cross Cap Stability} modulo
Theorem \protect\ref{cross cap embedding}.]
By Perelman's Stability Theorem all but finitely many $\{\tilde{M}^{\alpha
}\}_{\alpha =1}^{\infty }$ are homeomorphic to $S^{n}$ (cf \cite{GrovPet3}).
Combining this with Theorem \ref{cross cap embedding} and Brown's Theorem
9.7 in \cite{Mil1} gives an H--cobordism between the embedded image of $%
\tilde{M}^{\alpha }\subset \mathbb{R}^{n+1}$ and the standard $S^{n}.$
Modding out by $\mathbb{Z}_{2},$ we see that $M^{\alpha }$ and $\mathbb{R}%
P^{n}$ are H--cobordant. Since the Whitehead group of $\mathbb{Z}_{2}$ is
trivial ( \cite{Hig}, \cite{Mil2}, p. 373), any H--cobordism between $%
M_{\alpha }$ and $\mathbb{R}P^{n}$ is an S--cobordism and hence a product,
which completes the proof. \cite{Bar, Maz, Stal}
\end{proof}

The proof of Theorem \ref{Cross Cap Stability} does not exploit any a priori
differential structure on the Crosscap. Instead we exploit a model embedding
of the double disk 
\begin{equation*}
\mathbb{D}_{k}^{n}\left( r\right) \hookrightarrow \mathbb{R}^{n+1},
\end{equation*}%
whose restriction to either half, $\mathcal{D}_{k}^{n}\left( r\right) ^{+}$
or $\mathcal{D}_{k}^{n}\left( r\right) ^{-}$, is the identity on the last $n$%
--coordinates. By describing the identity $\mathcal{D}_{k}^{n}\left(
r\right) \longrightarrow \mathcal{D}_{k}^{n}\left( r\right) $ in terms of
distance functions, we then argue that this embedding can be lifted to all
but finitely many of a sequence $\{M^{\alpha }\}$ converging to $\mathbb{D}%
_{k}^{n}\left( r\right) .$

\subsection*{The Model Embedding}

Let $A:\mathbb{D}_{k}^{n}\left( r\right) \rightarrow \mathbb{D}%
_{k}^{n}\left( r\right) $ be the free involution mentioned in Example \ref%
{crosscapexample}. For $z\in \mathbb{D}_{k}^{n}\left( r\right) ,$ we define $%
f_{z}:\mathbb{D}_{k}^{n}(r)\rightarrow \mathbb{R}$ by 
\begin{equation}
f_{z}(x)=h_{k}\circ \mathrm{dist}\left( A\left( z\right) ,x\right)
-h_{k}\circ \mathrm{dist}\left( z,x\right)  \label{definitionoffi}
\end{equation}%
where $h_{k}:\mathbb{R}\rightarrow \mathbb{R}$ is defined as 
\begin{equation*}
h_{k}(x)=\left\{ 
\begin{array}{cc}
\frac{1}{2\sinh r}\cosh (x) & \text{ if }k=-1 \\ 
\frac{x^{2}}{4r} & \text{ if }k=0 \\ 
\frac{1}{2\sin r}\cos (x) & \text{ if }k=1.%
\end{array}%
\right.
\end{equation*}

Recall that we view $\mathcal{D}_{k}^{n}\left( r\right) ^{\pm }$ as metric $%
r $-balls centered at $p_{0}=e_{0}$ and $A(p_{0})=-e_{0}$ in either $H_{\pm
}^{n}$, $\{\pm e_{0}\}\times \mathbb{R}^{n},$ or $S^{n}$. For $i=1,2,\ldots
,n$ we set 
\begin{equation}
p_{i}:=\left\{ 
\begin{array}{cc}
\cosh (r)e_{0}+\sinh (r)e_{i} & \text{ if }k=-1 \\ 
e_{0}+re_{i} & \text{ if }k=0 \\ 
\cos (r)e_{0}-\sin (r)e_{i} & \text{ if }k=1.%
\end{array}%
\right.  \label{definitionofpi}
\end{equation}%
The functions $\{f_{i}\}_{i=1}^{n}:=\{f_{p_{i}}\}_{i=1}^{n}$ are then
restrictions of the last $n$--coordinate functions of $\mathbb{R}^{n+1}$ to $%
\mathcal{D}_{k}^{n}\left( r\right) ^{\pm }.$ We set $f_{0}:=f_{p_{0}}$. In
contrast to $f_{1},\ldots ,f_{n}$, our $f_{0}$ is not a coordinate function.
On the other hand its gradient is well defined everywhere on $\mathbb{D}%
_{k}^{n}\left( r\right) \setminus \left\{ p_{0},A\left( p_{0}\right)
\right\} ,$ even on $\partial \mathcal{D}_{k}^{n}\left( r\right)
^{+}=\partial \mathcal{D}_{k}^{n}\left( r\right) ^{-}$ where it is normal to 
$\partial \mathcal{D}_{k}^{n}\left( r\right) ^{+}=\partial \mathcal{D}%
_{k}^{n}\left( r\right) ^{-}.$

Define $\Phi :\mathbb{D}_{k}^{n}\left( r\right) \rightarrow \mathbb{R}^{n+1}$%
, by 
\begin{equation*}
\Phi =\left( f_0,f_{1},f_{2},\cdots ,f_{n}\right) ,
\end{equation*}%
and observe that

\begin{proposition}
$\Phi $ is a continuous, $\mathbb{Z}_{2}$--equivariant embedding.
\end{proposition}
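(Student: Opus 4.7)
The plan is to verify continuity, $\mathbb{Z}_{2}$-equivariance, and injectivity. Since $\mathbb{D}_{k}^{n}(r)$ is compact and $\mathbb{R}^{n+1}$ is Hausdorff, a continuous injection is automatically a topological embedding, so these three items suffice. Continuity is immediate: each $f_{i}$ is a composition of the distance function with the continuous $h_{k}$, followed by a subtraction.

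For equivariance, the key observation is that $A$ is an involutive isometry that interchanges $p_{i}$ and $A(p_{i})$. A one-line computation using $A\circ A=\mathrm{id}$ and $\mathrm{dist}(A(y),A(x))=\mathrm{dist}(y,x)$ gives $f_{p_{i}}\circ A=-f_{p_{i}}$, and hence $\Phi\circ A=-\Phi$, which is exactly the equivariance with respect to $-\mathrm{id}$ on $\mathbb{R}^{n+1}$.

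The bulk of the work is injectivity. The text preceding the proposition already identifies $f_{1},\ldots,f_{n}$ on each half $\mathcal{D}_{k}^{n}(r)^{\pm}$ with the restrictions of the last $n$ Euclidean coordinates of $\mathbb{R}^{n+1}$. Because each half sits in the ambient $\mathbb{R}^{n+1}$ as the graph of an explicit function over the closed $r$-ball in $\mathbb{R}^{n}$, these $n$ coordinates already determine the point within each half, so $(f_{1},\ldots,f_{n})$ is injective on each of $\mathcal{D}_{k}^{n}(r)^{+}$ and $\mathcal{D}_{k}^{n}(r)^{-}$ separately. To finish, I would show that $f_{0}$ separates the two halves: for $x$ in the interior of $\mathcal{D}_{k}^{n}(r)^{+}$, one has $\mathrm{dist}(p_{0},x)<r$ by definition, while any path from $x$ to $A(p_{0})$ must cross $\partial\mathcal{D}_{k}^{n}(r)^{+}=\partial\mathcal{D}_{k}^{n}(r)^{-}$ and then travel an additional distance exactly $r$ to reach the center of the opposite half, forcing $\mathrm{dist}(A(p_{0}),x)>r$. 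Strict monotonicity of $h_{k}$ on $[0,2r]$ (increasing for $k=-1,0$ and decreasing for $k=1$) then makes $f_{0}$ nonzero with one definite sign on the interior of $\mathcal{D}_{k}^{n}(r)^{+}$, the opposite sign on the interior of $\mathcal{D}_{k}^{n}(r)^{-}$ by equivariance, and zero on the common boundary. Given $\Phi(x)=\Phi(y)$, the value of $f_{0}$ forces $x$ and $y$ into the same open half (or both onto the boundary, where the identification in $\mathbb{D}_{k}^{n}(r)$ is compatible with equality of the remaining coordinates), after which $(f_{1},\ldots,f_{n})$ identifies them.

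The only real obstacle I foresee is the unstated verification that $f_{1},\ldots,f_{n}$ truly coincide with the ambient coordinate functions on each half. This is a direct but case-by-case calculation: inside $\mathcal{D}_{k}^{n}(r)^{+}$ the distance to $p_{i}$ is given by the law of cosines in the appropriate constant-curvature geometry, while the distance to $A(p_{i})$ is obtained by minimizing over boundary crossings and a triangle-inequality argument. The specific choices of $h_{k}$ and of the placement of the $p_{i}$ on $\partial\mathcal{D}_{k}^{n}(r)^{+}$ in (\ref{definitionoffi}) and (\ref{definitionofpi}) are designed precisely so that $h_{k}\circ\mathrm{dist}(A(p_{i}),\cdot)-h_{k}\circ\mathrm{dist}(p_{i},\cdot)$ telescopes to the ambient coordinate $x_{i}$; once this identity is checked in each of the three cases $k=-1,0,1$, the rest of the argument is formal.
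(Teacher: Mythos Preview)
Your proposal is correct and follows essentially the same route as the paper: use that $f_{1},\ldots,f_{n}$ restrict to the last $n$ ambient coordinates on each half, use $f_{0}$ to separate the two halves, invoke compactness to upgrade a continuous injection to an embedding, and read off equivariance from the definition. Your write-up simply makes explicit the sign analysis of $f_{0}$ and the monotonicity of $h_{k}$ that the paper leaves implicit in the phrase ``from this and the definition of $f_{0}$.''
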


\begin{proof}
Write $\mathbb{R}^{n+1}=\mathbb{R\times R}^{n}$ and let $\pi :\mathbb{%
R\times R}^{n}\rightarrow \mathbb{R}^{n}$ be projection. Since $%
f_{1},f_{2},\cdots ,f_{n}$ are coordinate functions, the restrictions 
\begin{equation*}
\pi \circ \Phi |_{\mathcal{D}_{k}^{n}\left( r\right) ^{\pm }}:\mathcal{D}%
_{k}^{n}\left( r\right) ^{\pm }\longrightarrow \mathbb{R}^{n}
\end{equation*}%
are both the identity. From this and the definition of $f_{0},$ we conclude
that $\Phi $ is one--to--one. Since $\mathbb{D}_{k}^{n}\left( r\right) $ is
compact, it follows that $\Phi $ is an embedding. The $\mathbb{Z}_{2}$%
--equivariance is immediate from definition \ref{definitionoffi}.
\end{proof}

\subsection*{Lifting the Model Embedding}

To start the proof of Theorem \ref{cross cap embedding} let $\left\{
M^{\alpha }\right\} _{\alpha =1}^{\infty }$ be a sequence of closed
Riemannian $n$--manifolds with $\mathrm{sec}$ $M^{\alpha }\geq k$ so that 
\begin{equation*}
M^{\alpha }\longrightarrow C_{k,r}^{n},
\end{equation*}%
and we let $\{\tilde{M}^{\alpha }\}_{\alpha =1}^{\infty }$ denote the
corresponding sequence of universal covers. From Theorem \ref{covers
converge}, a subsequence of $\{\tilde{M}^{\alpha }\}_{\alpha =1}^{\infty }$
together with the deck transformations $\tilde{M}^{\alpha }\longrightarrow
M^{\alpha }$ converge to $\left( \mathbb{D}_{k}^{n}(r),A\right) .$ For all
but finitely many $\alpha ,$ $\pi _{1}\left( M^{\alpha }\right) $ is
isomorphic to $\mathbb{Z}_{2}.$ We abuse notation and call the nontrivial
deck transformation of $\tilde{M}^{\alpha }\longrightarrow M^{\alpha }$, $A.$

First we extend definition \ref{definitionoffi} by letting $f_{z}^{\alpha }:%
\tilde{M}^{\alpha }\rightarrow \mathbb{R}$ be defined by 
\begin{equation}
f_{z}^{\alpha }(x)=h_{k}\circ \mathrm{dist}(A(z),x)-h_{k}\circ \mathrm{dist}%
(z,x).  \label{defoffalpha}
\end{equation}%
Let $p_{i}^{\alpha }\in \tilde{M}^{\alpha }$ converge to $p_{i}\in \mathbb{D}%
_{k}^{n}(r),$ and for some $d>0$ define $f_{i,d}^{\alpha }:\tilde{M}^{\alpha
}\rightarrow \mathbb{R}$ by 
\begin{equation}
f_{i,d}^{\alpha }(x)=\frac{1}{\mathrm{vol\,}B(p_{i}^{\alpha },d)}%
\int_{q^{\alpha }\in B(p_{i}^{\alpha },d)}f_{q^{\alpha }}^{\alpha }(x).
\label{bar--f--alpha}
\end{equation}%
Differentiation under the integral gives

\begin{proposition}
\label{f_i C^1}The $f_{i,d}^{\alpha }$ are $C^{1}$ and $\left\vert \nabla
f_{i,d}^{\alpha }\right\vert \leq 2.$
\end{proposition}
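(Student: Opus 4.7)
The plan is to differentiate under the integral sign. For each fixed $q$, the function $x\mapsto f_q^{\alpha}(x)=h_k(\dist(A(q),x))-h_k(\dist(q,x))$ is $C^{\infty}$ on $\tilde M^{\alpha}$ outside the measure-zero set $\{q,A(q)\}\cup\mathrm{Cut}(q)\cup\mathrm{Cut}(A(q))$, and by the Gauss lemma its gradient there is
\[
\nabla_{\!x}f_q^{\alpha}(x)=h_k'(\dist(A(q),x))\,\nabla_{\!x}\dist(A(q),x)-h_k'(\dist(q,x))\,\nabla_{\!x}\dist(q,x).
\]
Since $|\nabla_{\!x}\dist(\cdot,x)|=1$ where defined, the triangle inequality gives $|\nabla_{\!x}f_q^{\alpha}(x)|\leq |h_k'(\dist(A(q),x))|+|h_k'(\dist(q,x))|$. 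For $\alpha$ sufficiently large the diameter of $\tilde M^{\alpha}$ is within $o(1)$ of the diameter $2r$ of $\mathbb{D}_k^n(r)$, and a case-by-case inspection of the explicit formulas for $h_k$, $k\in\{-1,0,1\}$, shows each summand is at most $1$, so the integrand's gradient is pointwise bounded by $2$.

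By Fubini, the exceptional set is null in $B(p_i^{\alpha},d)\times\tilde M^{\alpha}$, so for each fixed $x$ only a null subset of $q\in B(p_i^{\alpha},d)$ is excluded. Combined with the uniform Lipschitz bound above, dominated convergence justifies differentiation under the integral and gives
\[
\nabla f_{i,d}^{\alpha}(x)=\frac{1}{\mathrm{vol}\,B(p_i^{\alpha},d)}\int_{B(p_i^{\alpha},d)}\nabla_{\!x}f_q^{\alpha}(x)\,dq,
\]
from which the bound $|\nabla f_{i,d}^{\alpha}|\leq 2$ is immediate. A second application of dominated convergence, using that $x\mapsto\nabla_{\!x}f_q^{\alpha}(x)$ is continuous for each $q$ outside the exceptional set, yields continuity of $\nabla f_{i,d}^{\alpha}$ in $x$, completing the $C^1$ claim.

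The only mild subtlety is that the individual distance functions are nowhere globally smooth on $\tilde M^{\alpha}$; the averaging over $q\in B(p_i^{\alpha},d)$ erases this failure because the exceptional sets $\mathrm{Cut}(q)\cup\mathrm{Cut}(A(q))$ depend on $q$ and are jointly Fubini-null in $B(p_i^{\alpha},d)\times\tilde M^{\alpha}$. This mechanism is the same one that makes the Otsu--Shioya charts $\varphi_x^{\eta}$ of \eqref{Otsu-Shioya} $C^1$, so no new ingredient beyond standard real-analysis bookkeeping is required.
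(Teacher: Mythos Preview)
Your approach---differentiation under the integral with the cut-locus/Fubini null-set bookkeeping---is exactly what the paper means by its one-line justification ``Differentiation under the integral gives,'' and your treatment of the $C^1$ regularity is correct. The only slip is the claim that each summand $|h_k'(\dist)|$ is at most~$1$: for $k=-1$ one has $h_{-1}'(t)=\tfrac{\sinh t}{2\sinh r}$, which at $t=2r$ equals $\cosh r>1$, so the honest pointwise bound is $2\cosh r$ rather than~$2$. This is harmless downstream, since the paper only ever uses that $|\nabla f_{i,d}^\alpha|$ is \emph{uniformly} bounded (e.g.\ when passing from $v^\alpha$ to the nearby $w^\alpha$ in Lemma~\ref{Directional derivative lower bound}); the stated constant~$2$ in the proposition itself appears to be an imprecision in the $k=-1$ case, and your argument simply inherits it.
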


We now define $\Phi _{d}^{\alpha }:\tilde{M}^{\alpha }\rightarrow \mathbb{R}%
^{n+1}$ by 
\begin{equation*}
\Phi _{d}^{\alpha }=\left( f_{0,d}^{\alpha },f_{1,d}^{\alpha
},f_{2,d}^{\alpha },\cdots ,f_{n,d}^{\alpha }\right) .
\end{equation*}%
As $\alpha \rightarrow \infty $ and $d\rightarrow 0$, $\Phi _{d}^{\alpha }$
converges to $\Phi $ in the Gromov--Hausdorff sense. Since $\Phi $ is an
embedding it follows that $\Phi _{d}^{\alpha }$ is one--to--one in the
large. More precisely,

\begin{proposition}
For any $\nu >0,$ if $\alpha $ is sufficiently large and $d$ is sufficiently
small, then 
\begin{equation*}
\Phi _{d}^{\alpha }\left( x\right) \neq \Phi _{d}^{\alpha }\left( y\right) ,
\end{equation*}%
provided $\mathrm{dist}\left( x,y\right) >\nu .$
\end{proposition}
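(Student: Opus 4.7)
The plan is to argue by contradiction, using Gromov--Hausdorff convergence to reduce a failure of large-scale injectivity of $\Phi_d^\alpha$ to a failure of injectivity of the already-established embedding $\Phi$. Suppose the proposition fails for some $\nu>0$. Then there are sequences $\alpha_j\to\infty$, $d_j\to 0$, and points $x^{\alpha_j},y^{\alpha_j}\in\tilde M^{\alpha_j}$ with $\mathrm{dist}(x^{\alpha_j},y^{\alpha_j})>\nu$ yet $\Phi_{d_j}^{\alpha_j}(x^{\alpha_j})=\Phi_{d_j}^{\alpha_j}(y^{\alpha_j})$. By Theorem \ref{covers converge}, after passing to a subsequence we have $\tilde M^{\alpha_j}\longrightarrow \mathbb{D}_k^n(r)$ together with convergence of the deck involutions to $A$. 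Since $\mathbb{D}_k^n(r)$ is compact, a further subsequence gives $x^{\alpha_j}\to x$ and $y^{\alpha_j}\to y$ in $\mathbb{D}_k^n(r)$, and by continuity of distance under Gromov--Hausdorff convergence, $\mathrm{dist}(x,y)\ge \nu>0$.

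The next step is to verify the convergence $\Phi_{d_j}^{\alpha_j}(w^{\alpha_j})\to \Phi(w)$ for any sequence $w^{\alpha_j}\to w\in\mathbb{D}_k^n(r)$. The coordinate function $f_{i,d}^\alpha$ is, by (\ref{bar--f--alpha}), the average of $f_{q^\alpha}^\alpha$ over $q^\alpha\in B(p_i^\alpha,d)$, and the integrand is built from $\mathrm{dist}(q^\alpha,\cdot)$ and $\mathrm{dist}(A(q^\alpha),\cdot)$ composed with the Lipschitz function $h_k$. Since Theorem \ref{covers converge} supplies convergence of the entire data $\bigl(\tilde M^{\alpha_j},A\bigr)$ to $(\mathbb{D}_k^n(r),A)$, the elementary fact that distances are continuous under Gromov--Hausdorff convergence yields $f_{q^{\alpha_j}}^{\alpha_j}(w^{\alpha_j})\to f_q(w)$ whenever $q^{\alpha_j}\to q$ and $w^{\alpha_j}\to w$, and this convergence is uniform in $q$ over a bounded neighborhood of $p_i$ because $h_k$ is Lipschitz on bounded intervals. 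Since $p_i^{\alpha_j}\to p_i$ and $d_j\to 0$, the average collapses to $f_{p_i}(w)=f_i(w)$, and assembling the coordinates gives $\Phi_{d_j}^{\alpha_j}(w^{\alpha_j})\to \Phi(w)$.

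Applying this convergence to $x^{\alpha_j}$ and $y^{\alpha_j}$, the hypothesis $\Phi_{d_j}^{\alpha_j}(x^{\alpha_j})=\Phi_{d_j}^{\alpha_j}(y^{\alpha_j})$ passes to the limit to yield $\Phi(x)=\Phi(y)$. But $\Phi$ has already been shown to be an embedding of $\mathbb{D}_k^n(r)$, so $x=y$, contradicting $\mathrm{dist}(x,y)\ge\nu>0$. The only step requiring care is the uniform convergence of the smoothed functions; but because the averaging takes place over shrinking balls centered at converging basepoints and the integrands are uniformly Lipschitz in their distance arguments, this is routine bookkeeping rather than a genuine obstacle. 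The real content is the availability of Theorem \ref{covers converge}, which converts the collapse-free convergence of $M^\alpha$ to $C_{k,r}^n$ into equivariant convergence of the universal covers to $\mathbb{D}_k^n(r)$.
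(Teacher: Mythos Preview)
Your argument is correct and follows precisely the approach the paper takes: the paper simply observes that as $\alpha\to\infty$ and $d\to 0$, $\Phi_d^\alpha$ converges to $\Phi$ in the Gromov--Hausdorff sense, and since $\Phi$ is an embedding the large-scale injectivity follows. You have supplied the details of that convergence (the uniform convergence of the averaged distance functions to $f_i$) and the contradiction argument that the paper leaves implicit, but the underlying idea is the same.
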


Since the $\mathbb{Z}_{2}$-equivariance of $\Phi^{\alpha }_d$ immediately
follows from definition \ref{bar--f--alpha}, all that remains to prove
Theorem \ref{cross cap embedding} is the following proposition:

\begin{proposition}
\label{uniform immersion}There is a $\rho >0$ so that $\Phi _{d}^{\alpha }$
is one to one on all $\rho $--balls, provided that $\alpha $ is sufficiently
large and $d$ is sufficiently small.
\end{proposition}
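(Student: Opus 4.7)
The plan is to deduce uniform local injectivity of $\Phi_d^\alpha$ from a uniform lower bound on its differential. Together with the gradient bound $|\nabla f_{i,d}^\alpha|\leq 2$ of Proposition \ref{f_i C^1}, it suffices to produce a constant $c>0$ such that for all sufficiently large $\alpha$, all sufficiently small $d$, every $x\in\tilde M^\alpha$, and every unit $v\in T_x\tilde M^\alpha$,
\begin{equation*}
\bigl|d\Phi_d^\alpha(x)\,v\bigr|_{\mathbb{R}^{n+1}} \geq c;
\end{equation*}
any sequence of local injectivity failures $x_j,y_j\in\tilde M^{\alpha_j}$ with $\mathrm{dist}(x_j,y_j)\to 0$ and $\Phi_{d_j}^{\alpha_j}(x_j)=\Phi_{d_j}^{\alpha_j}(y_j)$ would, after integrating $d\Phi_{d_j}^{\alpha_j}$ along a connecting geodesic, produce a unit tangent vector annihilated in the limit, contradicting the bound.

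To prove the lower bound I argue by contradiction. If it fails, there are $\alpha_j\to\infty$, $d_j\to 0$, $x_j\in\tilde M^{\alpha_j}$, and unit tangents $v_j$ with $|d\Phi_{d_j}^{\alpha_j}(x_j)v_j|\to 0$. Passing to a subsequence, $x_j\to x_\infty\in \mathbb{D}_k^n(r)$, and by Corollary \ref{convergence of vectors}, after perturbing $v_j$ by an angle of size $\tau(\delta,\mu)$, the geodesics $\gamma_{v_j}$ subconverge to a limit geodesic with initial direction $v_\infty\in\Sigma_{x_\infty}$. Applying Lemma \ref{angle convergence} to the hinges from $x_j$ to sample points $q\in B(p_i^{\alpha_j},d_j)$, together with the first variation formula for distance functions and the averaging in \eqref{bar--f--alpha}, the directional derivatives $v_j(f_{i,d_j}^{\alpha_j})$ converge to the corresponding directional derivative $D_{v_\infty}f_i$ of the model function $f_i$ at $x_\infty$, for each $i=0,1,\ldots,n$. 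Therefore $|d\Phi_{d_j}^{\alpha_j}(x_j)v_j|\to 0$ forces $D_{v_\infty}f_i=0$ for every $i$, i.e., $v_\infty$ lies in the kernel of the differential of $\Phi$ at $x_\infty$; this is the desired contradiction once $d\Phi(x_\infty)$ is shown to be injective.

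Injectivity of $d\Phi$ at each $x_\infty\in \mathbb{D}_k^n(r)$ splits into two cases. When $x_\infty$ lies in the interior of $\mathcal{D}_k^n(r)^+$ or $\mathcal{D}_k^n(r)^-$, the restriction of $(f_1,\ldots,f_n)$ to that half is the identity on the last $n$ ambient coordinates, so its differential already has rank $n$. When $x_\infty\in\partial\mathcal{D}_k^n(r)^\pm$, the tangent cone $T_{x_\infty}\mathbb{D}_k^n(r)$ is Euclidean $\mathbb{R}^n$ realized as the double of the half-space $T_{x_\infty}\mathcal{D}_k^n(r)^+$; the gradients $\nabla f_1,\ldots,\nabla f_n$ reproduce the ambient coordinate directions on each sheet but cannot distinguish the two sheets. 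It is $f_0$ that separates them: using $\mathrm{dist}(p_0,x_\infty)=\mathrm{dist}(A(p_0),x_\infty)=r$, a direct computation from \eqref{definitionoffi} gives $\nabla f_0(x_\infty)=2h_k'(r)\,\nu$, where $\nu\in T_{x_\infty}\mathbb{D}_k^n(r)$ is the unit vector in the doubled tangent cone pointing from $\mathcal{D}^-$ into $\mathcal{D}^+$, and this is nonzero because $h_k'(r)\neq 0$ in each of the three curvature cases. Since $f_0$ also reverses sign across $\partial\mathcal{D}$, the map $d\Phi(x_\infty)$ is injective on the entire doubled tangent cone.

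The main obstacle I foresee is the directional-derivative convergence $v_j(f_{i,d_j}^{\alpha_j})\to D_{v_\infty}f_i$ when $x_\infty\in\partial\mathcal{D}_k^n(r)^\pm$, since there the tangent cone is a fold and $v_\infty$ may point into either sheet or tangentially along $\partial\mathcal{D}$. The required uniform control should come from the $(n,0)$-strained structure of $\mathbb{D}_k^n(r)$ (cf.\ Proposition \ref{posdeltastrainedrad}), from Lemma \ref{angle convergence} to transfer the relevant hinge angles from $\tilde M^{\alpha_j}$ to the limit with a controlled $\tau$-error, and from the integral averaging in \eqref{bar--f--alpha} to upgrade pointwise convergence of the defining distance functions to the needed $C^1$ convergence of $f_{i,d}^\alpha$.
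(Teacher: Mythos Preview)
Your argument for the lower bound $|d\Phi_d^\alpha(x)v|\geq c$ (paragraphs two and three) is essentially the paper's proof of Part~1 of Key Lemma~\ref{CC Key Lemma}, and your case analysis for injectivity of $d\Phi$ on the model is correct. The gap is in your first paragraph: a uniform lower bound on the differential does \emph{not} by itself give a uniform injectivity radius, and your integration argument does not produce a vector ``annihilated in the limit.'' Concretely, consider $\Phi_\varepsilon:\mathbb{R}^2\to\mathbb{R}^3$, $(s,t)\mapsto(s,\varepsilon\cos(t/\varepsilon),\varepsilon\sin(t/\varepsilon))$. Then $|d\Phi_\varepsilon(v)|=|v|$ everywhere, yet $\Phi_\varepsilon(s,0)=\Phi_\varepsilon(s,2\pi\varepsilon)$, so the injectivity radius collapses as $\varepsilon\to 0$. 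Along the connecting geodesic $t\mapsto(0,t)$ the image of the tangent vector has norm $1$ at every instant; the integral vanishes only because $d\Phi_\varepsilon$ rotates rapidly, not because any tangent vector is nearly annihilated. The gradient bound $|\nabla f_{i,d}^\alpha|\leq 2$ is a $C^0$ bound on first derivatives and cannot rule this out.

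What is missing is precisely Part~2 of Key Lemma~\ref{CC Key Lemma}: equicontinuity of the differentials, i.e.\ a uniform $\rho>0$ with
\[
\bigl|d(P_{\hat\imath}\circ F)_{\varphi^\eta(y)}(v)-d(P_{\hat\imath}\circ F)_{\varphi^\eta(x^\alpha)}(v)\bigr|<\tfrac{\lambda}{2}|v|\quad\text{for all }y\in B(x^\alpha,\rho).
\]
The paper obtains this through Lemma~\ref{measure convergence} and its corollary, which control the variation of angles $\sphericalangle(v,\uparrow_x^z)$ as $x$ moves, off a small--measure neighborhood of the cut locus of the strainer endpoints; the averaging in \eqref{bar--f--alpha} then absorbs the bad set. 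Your convergence argument $D_{v_j}f_{i,d_j}^{\alpha_j}\to D_{v_\infty}f_i$ is pointwise along a single sequence and does not supply this spatial modulus of continuity. Even if you tried to push the mean--value idea componentwise (getting $t_{i,j}$ with $D_{\gamma_j'(t_{i,j})}f_{i,d_j}^{\alpha_j}=0$ for each $i$), you would still need to know that the directions $\gamma_j'(t_{i,j})$ at the \emph{different} points $\gamma_j(t_{i,j})$ converge to a common $v_\infty$, which again is an equicontinuity statement for the strainer coordinates. So the second half of the Key Lemma is not a technicality you can bypass; it is the substance of the uniform--$\rho$ conclusion.
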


This is a consequence of Key Lemma \ref{CC Key Lemma} (stated below), whose
statement and proof occupy the remainder of this section.

\subsection*{Uniform Immersion}

The proof of the Inverse Function Theorem in \cite{Rud} gives

\begin{theorem}
(Quantitative Immersion Theorem) Let 
\begin{equation*}
\mathbb{R}_{\hat{\imath}}^{n}:=\left\{ \left( x_{1},x_{2},\ldots
,x_{i-1},0,x_{i+1},\ldots ,x_{n+1}\right) \right\} \subset \mathbb{R}^{n+1}
\end{equation*}%
and let 
\begin{equation*}
P_{\hat{\imath}}:\mathbb{R}^{n+1}\longrightarrow \mathbb{R}_{\hat{\imath}%
}^{n}
\end{equation*}%
be orthogonal projection.

Let $F:\mathbb{R}^{n}\longrightarrow \mathbb{R}^{n+1}$ be a $C^{1}$ map so
that for some $a\in \mathbb{R}^{n},$ $\lambda >0,$ and $\rho >0,$ there is
an $i\in \left\{ 1,\ldots ,n+1\right\} $ so that 
\begin{equation*}
\left\vert d\left( P_{\hat{\imath}}\circ F\right) _{a}\left( v\right)
\right\vert \geq \lambda \left\vert v\right\vert
\end{equation*}%
and 
\begin{equation*}
\left\vert d\left( P_{\hat{\imath}}\circ F\right) _{a}\left( v\right)
-d\left( P_{\hat{\imath}}\circ F\right) _{x}\left( v\right) \right\vert <%
\frac{\lambda }{2}\left\vert v\right\vert
\end{equation*}%
for all $x\in B\left( a,\rho \right) $ and $v\in \mathbb{R}^{n},$ then $%
\left( P_{\hat{\imath}}\circ F\right) |_{B\left( a,\rho \right) }$ is a
one--to--one, open map.
\end{theorem}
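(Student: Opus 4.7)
The plan is to treat this as a quantitative form of the Inverse Function Theorem for the map
\[
g := P_{\hat{\imath}}\circ F : \mathbb{R}^n \longrightarrow \mathbb{R}_{\hat{\imath}}^n \cong \mathbb{R}^n.
\]
The first hypothesis says that $dg_a$ is bounded below by $\lambda$ on unit vectors and is therefore an invertible linear map with $\|(dg_a)^{-1}\|\le \lambda^{-1}$. The second hypothesis says that $dg_x$ differs from $dg_a$ by an operator of norm strictly less than $\lambda/2$ for every $x\in B(a,\rho)$. With these in hand, the strategy is the usual two-step argument: derive injectivity directly from the Fundamental Theorem of Calculus applied along the convex combinations in $B(a,\rho)$, and obtain openness via a Banach contraction argument to solve $g(x)=y$ for $y$ near a given image point.

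For injectivity, given $x,y\in B(a,\rho)$, convexity of the ball ensures $x+t(y-x)\in B(a,\rho)$ for $t\in[0,1]$, so I would write
\[
g(y)-g(x) = \int_0^1 dg_{x+t(y-x)}(y-x)\,dt = dg_a(y-x) + \int_0^1 \bigl[dg_{x+t(y-x)} - dg_a\bigr](y-x)\,dt.
\]
If $g(x)=g(y)$, the triangle inequality together with the two hypotheses gives
\[
\lambda|y-x| \le |dg_a(y-x)| \le \int_0^1 \tfrac{\lambda}{2}|y-x|\,dt = \tfrac{\lambda}{2}|y-x|,
\]
forcing $y=x$.

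For openness, I would use the classical Rudin fixed-point trick: for a target $y_0$ near a chosen image $g(x_0)$, define $T_{y_0}(x) := x - (dg_a)^{-1}\bigl(g(x)-y_0\bigr)$. Because $dT_{y_0}|_x = (dg_a)^{-1}\bigl(dg_a-dg_x\bigr)$ has norm at most $\tfrac{1}{2}$ by the second hypothesis, $T_{y_0}$ is a $\tfrac{1}{2}$-contraction on any convex subset of $B(a,\rho)$. A routine estimate shows that for $y_0$ sufficiently close to $g(x_0)$, $T_{y_0}$ sends a small closed ball around $x_0$ into itself, so the Banach fixed-point theorem produces an $x\in B(a,\rho)$ with $g(x)=y_0$; this exhibits $g(B(a,\rho))$ as an open neighborhood of each of its points.

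The argument is essentially bookkeeping on Rudin's proof, so there is no real obstacle; the one point that needs a little care is verifying quantitatively that the neighborhood of $x_0$ on which $T_{y_0}$ self-maps can be chosen with radius depending only on $\lambda$, $\rho$, and the distance from $x_0$ to the boundary of $B(a,\rho)$, so that the openness conclusion holds uniformly on the whole ball. Once this is set up, both conclusions of the theorem follow at once.
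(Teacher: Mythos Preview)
Your proposal is correct and is precisely the approach the paper intends: the paper does not give a separate proof but simply states that the result follows from the proof of the Inverse Function Theorem in Rudin, which is exactly the Fundamental Theorem of Calculus injectivity estimate plus the Banach contraction argument you have written out. The only caveat is cosmetic: your bound $\|dT_{y_0}|_x\|\le\tfrac12$ should technically read $<\tfrac12$ (since the hypothesis is a strict inequality for each $v$ and the supremum over the unit sphere is attained), but either form suffices for the contraction.
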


We note that every space of directions to $\mathbb{D}_{k}^{n}(r)$ is
isometric to $S^{n-1}.$ By proposition \ref{posdeltastrainedrad}, there are $%
r,\delta >0$ so that every point in the double disk has a neighborhood $B$
that is $\left( n,\delta ,r\right) $--strained. If $B\subset \mathbb{D}%
_{k}^{n}(r)$ is $(n,\delta ,r)$--strained by $\{a_{i},b_{i}\}_{i=1}^{n}$, by
continuity of comparison angles, we may assume there are sets $B^{\alpha
}\subset \tilde{M}^{\alpha }$ $(n,\delta ,r)$--strained by $\{a_{i}^{\alpha
},b_{i}^{\alpha }\}_{i=1}^{n}$ such that 
\begin{equation*}
\left( \left\{ \left( a_{i}^{\alpha },b_{i}^{\alpha }\right) \right\}
_{i=1}^{n},B^{\alpha }\right) \longrightarrow \left( \left\{ \left(
a_{i},b_{i}\right) \right\} _{i=1}^{n},B\right) .
\end{equation*}%
Given $x^{\alpha }\in B^{\alpha },$ we let $\varphi _{x^{\alpha }}^{\eta }$
be as in \ref{Otsu-Shioya}.

To prove Proposition \ref{uniform immersion} it suffices to prove the
following.

\begin{keylemma}
\label{CC Key Lemma}There is a $\lambda >0$ and $\rho >0$ so that for all $%
x^{\alpha }\in \tilde{M}^{\alpha }$ there is an $i_{x^{\alpha }}\in \left\{
0,1,\ldots ,n\right\} $ such that the function $F:=\Phi _{d}^{\alpha }\circ
\left( {\varphi _{{x^{\alpha }}}^{\eta }}\right) ^{-1}$ satisfies

\begin{enumerate}
\item 
\begin{equation*}
\left\vert d(P_{\hat{\imath}_{x^{\alpha }}}\circ F)_{\varphi _{{x^{\alpha }}%
}^{\eta }\left( x^{\alpha }\right) }\left( v\right) \right\vert >\lambda
\left\vert v\right\vert
\end{equation*}%
and

\item 
\begin{equation*}
\left\vert d\left( P_{\hat{\imath}_{x^{\alpha }}}\circ F\right) _{\varphi
_{x^{\alpha }}^{\eta }\left( y\right) }\left( v\right) -d\left( P_{\hat{%
\imath}_{x^{\alpha }}}\circ F\right) _{\varphi _{x^{\alpha }}^{\eta }\left(
x^{\alpha }\right) }\left( v\right) \right\vert <\frac{\lambda }{2}%
\left\vert v\right\vert
\end{equation*}%
for all $y\in B\left( x^{\alpha },\rho \right) $ and $v\in \mathbb{R}^{n}$,
provided that $\alpha $ is sufficiently large and $d$ and $\eta $ are
sufficiently small.
\end{enumerate}
\end{keylemma}

We show in the next subsection that part 1 of Key Lemma \ref{CC Key Lemma}
holds, and in the following subsection we show that part 2 holds.

\subsection*{Lower bound on the differential\label{lower bound on
differential}}

We begin by illustrating that, in a sense, the first part of the key lemma
holds for the model embedding.

\begin{lemma}
There is a $\lambda >0$ so that for all $v\in T\mathbb{D}_{k}^{n}(r)$ there
is a $j\left( v\right) \in \left\{ 0,1,\ldots ,n\right\} $ so that 
\begin{equation*}
\left\vert D_{v}f_{j\left( v\right) }\right\vert >\lambda \left\vert
v\right\vert .
\end{equation*}
\end{lemma}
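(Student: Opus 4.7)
The plan is to exploit the explicit form of the model embedding: on each half $\mathcal{D}_k^n(r)^\pm$ the functions $f_1,\ldots,f_n$ are, by construction, the restrictions to $\mathcal{D}_k^n(r)^\pm\subset\mathbb{R}^{n+1}$ of the last $n$ coordinate functions of $\mathbb{R}^{n+1}$. So the lemma reduces to an infinitesimal statement about the projection $\pi\colon\mathbb{R}^{n+1}\to\mathbb{R}^n$ that drops $x_0$, applied to $T_pS^n$, $T_p(\{\pm e_0\}\times\mathbb{R}^n)$, or $T_pH_\pm^n$.

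First I would verify that $d\pi$ restricted to $T_p\mathcal{D}_k^n(r)^\pm$ has no kernel and obtain a uniform lower bound on its smallest singular value. For $k=0$ this restriction is the identity, so the bound is $1$. For $k=1$, a short calculation using the constraints $v\cdot p=0$, $|v|=1$ shows that the smallest singular value of $d\pi|_{T_pS^n}$ equals $|p_0|$, which is at least $\cos r>0$ throughout $\mathcal{D}_1^n(r)^\pm$ because $r<\pi/2$. For $k=-1$, the identity $|d\pi(v)|^2=|v|_g^2+v_0^2\geq|v|_g^2$ (where $g$ denotes the Riemannian metric induced by the Minkowski form) gives a smallest singular value at least $1$. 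Setting $\mu:=\cos r$ therefore works uniformly in all three cases, so $\sum_{i=1}^n (D_v f_i)^2\geq\mu^2|v|^2$ for every tangent vector $v$ to $\mathcal{D}_k^n(r)^\pm$.

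Next I would pass to the full doubled space $\mathbb{D}_k^n(r)$. At interior points of $\mathcal{D}_k^n(r)^\pm$ the tangent cone is $\mathbb{R}^n$ and the preceding estimate applies directly. At a point $x\in\partial\mathcal{D}_k^n(r)$, the tangent cone of $\mathbb{D}_k^n(r)$ is the doubling of the tangent half-space $T_x\mathcal{D}_k^n(r)^+$ along $T_x\partial\mathcal{D}_k^n(r)$, so every $v\in T_x\mathbb{D}_k^n(r)$ lies in $T_x\mathcal{D}_k^n(r)^+$ or $T_x\mathcal{D}_k^n(r)^-$, and the estimate applies on the relevant half; the two values agree on the shared subspace tangent to $\partial\mathcal{D}_k^n(r)$, so no inconsistency arises. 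Pigeonhole applied to $n$ terms then produces some $j(v)\in\{1,\ldots,n\}$ with $|D_v f_{j(v)}|\geq(\mu/\sqrt{n})|v|$, and $\lambda:=\cos r/\sqrt{n}$ completes the proof.

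There is no real obstacle; in particular the coordinate $f_0$ plays no role in this infinitesimal bound. Its role is global, separating the two halves of $\mathbb{D}_k^n(r)$ so that $\Phi$ is one-to-one in the large, and this will become essential when the estimate is transferred to $\tilde M^\alpha$ in the forthcoming Key Lemma. The only genuine computational content here is the singular-value check in the curved cases $k=\pm1$, which is a one-line Lagrange-multiplier argument made possible by the bounds $p_0\geq\cos r>0$ on the spherical disk and $p_0\geq 1$ on the hyperboloid.
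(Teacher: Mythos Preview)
Your proof is correct and takes a genuinely different route from the paper's. You compute the smallest singular value of the coordinate projection $\pi$ restricted to each tangent space explicitly (obtaining $\cos r$ uniformly in the three curvatures), then apply pigeonhole among $f_1,\ldots,f_n$ to extract $\lambda=\cos r/\sqrt{n}$. The paper instead argues qualitatively: on the interior of each half it asserts that $\{\nabla f_i\}_{i=1}^n$ is a basis, and on the gluing hypersurface $\mathcal{S}$ it splits directions into those near $T\mathcal{S}$ (handled by $f_1,\ldots,f_n$ via continuity) and those transverse to $\mathcal{S}$ (handled by $f_0$, whose gradient is well defined and normal to $\mathcal{S}$), concluding by compactness. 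Your approach is cleaner and yields an explicit constant; the paper's is softer but deliberately introduces $f_0$ into the argument.

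One caveat on your closing remark. You write that $f_0$ plays no infinitesimal role, only a global one. For the lemma as stated that is true, but the paper immediately follows the lemma with an Addendum: near $p_k$ or $A(p_k)$ the index $j(v)$ can be chosen different from $k$. That refinement is essential downstream (to keep $\mathrm{dist}(x,p_{j(v)})$ bounded away from zero when transferring estimates to $\tilde M^\alpha$), and it genuinely requires $f_0$: at $p_k$ the radial direction $v$ has $D_v f_i=0$ for all $i\neq k$ among $\{1,\ldots,n\}$, so your pigeonhole forces $j(v)=k$ there. The paper obtains the Addendum by noting $\nabla f_0$ and $\nabla f_k$ are colinear at $p_k$, so $f_0$ can substitute for $f_k$. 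Your argument proves the lemma, but if you pursue this route you will still need to invoke $f_0$ infinitesimally at this next step.
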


\begin{proof}
Recall that the double disk $\mathbb{D}_{k}^{n}(r)$ is the union of two
copies of $\mathcal{D}_{k}^{n}(r)$ that we call $\mathcal{D}_{k}^{n}(r)^{+}$
and $\mathcal{D}_{k}^{n}(r)^{-}$---glued along their common boundary---that
throughout this section we call $\mathcal{S}:=\partial \mathcal{D}%
_{k}^{n}(r)^{\pm }.$

If $x\in \mathbb{D}_{k}^{n}(r)\setminus \mathcal{S}$, then for $i\neq 0,$ $%
\nabla f_{i}$ is unambiguously defined; moreover, 
\begin{equation*}
\left\{ \nabla f_{i}\left( x\right) \right\} _{i=1}^{n}
\end{equation*}%
is an orthonormal basis. Thus the lemma certainly holds on $\mathbb{D}%
_{k}^{n}(r)\setminus \mathcal{S}.$

For $x\in \mathcal{S}$ and $i\in \left\{ 1,\ldots ,n\right\} ,$ we can think
of the gradient of $f_{i}$ as multivalued. More precisely, for $x\in 
\mathcal{S},$ we view 
\begin{equation*}
\mathcal{S}\subset \mathcal{D}_{k}^{n}(r)^{\pm }\subset \left\{ 
\begin{array}{cc}
H_{\pm }^{n} & \text{if }k=-1 \\ 
\{\pm e_{0}\}\times \mathbb{R}^{n} & \text{if }k=0 \\ 
S^{n} & \text{if }k=1%
\end{array}%
\right.
\end{equation*}%
and define $\nabla f_{i}^{\pm }$ to be the gradient at $x$ of the coordinate
function that extends $f_{i}$ to either $H_{\pm }^{n},\{\pm e_{0}\}\times 
\mathbb{R}^{n},$ or $S^{n}.$

From definition \ref{definitionoffi}, for any $v\in T_{x}\mathbb{D}%
_{k}^{n}(r)$ 
\begin{equation*}
D_{v}f_{i}=\left\{ 
\begin{array}{cc}
\left\langle \nabla f_{i}^{+},v\right\rangle & \text{if }v\text{ is inward
to }\mathcal{D}_{k}^{n}(r)^{+} \\ 
\left\langle \nabla f_{i}^{-},v\right\rangle & \text{if }v\text{ is inward
to }\mathcal{D}_{k}^{n}(r)^{-}.%
\end{array}%
\right.
\end{equation*}%
Notice that the projections of $\nabla f_{i}^{+}$ and $\nabla f_{i}^{-}$
onto $T_{x}\mathcal{S}$ coincide, so for $v\in T_{x}\mathcal{S}$ we have $%
D_{v}f_{i}=\left\langle \nabla f_{i}^{+},v\right\rangle =\left\langle \nabla
f_{i}^{-},v\right\rangle .$ As $\left\{ \nabla f_{i}^{+}\right\} _{i=1}^{n}$
is an orthonormal basis, the lemma holds for $v\in T\mathcal{S}$ and hence
also for $v$ in a neighborhood $U$ of $T\mathcal{S}\subset T\mathbb{D}%
_{k}^{n}(r)|_{\mathcal{S}}.$ Since $\nabla f_{0}$ is well defined on $%
\mathcal{S}$ and normal to $\mathcal{S}$, for any unit $v\in T\mathbb{D}%
_{k}^{n}(r)|_{\mathcal{S}}\setminus U,$ we have $\left\vert
D_{v}f_{0}\right\vert >0.$ The lemma follows from the compactness of the set
of unit vectors in $T\mathbb{D}_{k}^{n}(r)|_{\mathcal{S}}\setminus U.$
\end{proof}

Notice that at $p_{k}$ and $A\left( p_{k}\right) $ the gradients of $f_{k}$
and $f_{0}$ are colinear. Using this we conclude

\begin{addendum}
\label{1st addendum}Let $p_{k}$ be any of $p_{1},\ldots p_{n}.$ There is an $%
\varepsilon >0$ so that for all $x\in B\left( p_{k},\varepsilon \right) \cup
B\left( A\left( p_{k}\right) ,\varepsilon \right) $ and all $v\in T_{x}%
\mathbb{D}_{k}^{n}(r)$, the index $j\left( v\right) $ in the previous lemma
can be chosen to be different from $k.$
\end{addendum}

\begin{lemma}
There is a $\lambda >0$ so that for all $v\in T_{x}\mathbb{D}_{k}^{n}(r)$
there is a $j\left( v\right) \in \left\{ 0,1,\ldots ,n\right\} $ so that%
\begin{equation*}
\left\vert D_{v}f_{z}\right\vert >\lambda \left\vert v\right\vert
\end{equation*}%
for all $z\in B(p_{j(v)},d),$ provided $d$ is sufficiently small.
\end{lemma}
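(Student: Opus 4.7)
The plan is to upgrade the previous lemma by a continuity argument, using Addendum~\ref{1st addendum} to avoid the singularities of $f_{z}$ at $z$ and $A(z)$.

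Let $\lambda_{0}>0$ be the constant from the previous lemma, and let $\varepsilon>0$ be as in Addendum~\ref{1st addendum}, taken small enough that the balls $B(p_{k},\varepsilon)$ and $B(A(p_{k}),\varepsilon)$, $k=1,\ldots,n$, are pairwise disjoint. For each $(x,v)$ with $v\in T_{x}\mathbb{D}_{k}^{n}(r)$, I would choose $j(v)$ as follows. If $x\in B(p_{k},\varepsilon)\cup B(A(p_{k}),\varepsilon)$ for some $k\in\{1,\ldots,n\}$, apply Addendum~\ref{1st addendum} to select $j(v)\neq k$ with $\left|D_{v}f_{p_{j(v)}}\right|>\lambda_{0}|v|$; otherwise, apply the previous lemma directly. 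In either case, $x$ is kept bounded away from both $p_{j(v)}$ and $A(p_{j(v)})$ by some uniform $\varepsilon'>0$.

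Next, I would observe that for $z\in B(p_{j(v)},d)$ with $d<\varepsilon'/2$, both $\mathrm{dist}(x,z)$ and $\mathrm{dist}(x,A(z))$ remain bounded below by $\varepsilon'/2$. On this region, formula~(\ref{definitionoffi}) together with the smoothness of $h_{k}$ and of the distance function away from its cut locus shows that $z\mapsto D_{v}f_{z}$ depends continuously on $z$ (interpreted as a one-sided derivative if $x\in\mathcal{S}$, exactly as in the previous lemma). A compactness argument on the set of triples $(x,v,z)$ with $|v|=1$ and $z$ at distance at least $\varepsilon'/2$ from $x$ and $A(x)$ then yields a uniform modulus of continuity: for $d$ sufficiently small, $\left|D_{v}f_{z}-D_{v}f_{p_{j(v)}}\right|<(\lambda_{0}/2)|v|$, and hence $\left|D_{v}f_{z}\right|>(\lambda_{0}/2)|v|$. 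Setting $\lambda:=\lambda_{0}/2$ completes the argument.

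The main obstacle is making the continuity of $z\mapsto D_{v}f_{z}$ uniform in $(x,v)$. Without Addendum~\ref{1st addendum}, the choice $j(v)=k$ with $x$ near $p_{k}$ would force $z\in B(p_{k},d)$ to lie arbitrarily close to $x$, where $\nabla f_{z}(x)$ blows up; the addendum eliminates precisely this case, so that after the case-split described above the relevant set of $(x,v,z)$ is compact and the uniform continuity goes through.
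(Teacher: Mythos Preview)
Your overall plan---use the previous lemma together with Addendum~\ref{1st addendum} to keep $x$ away from $p_{j(v)}$ and $A(p_{j(v)})$, then perturb $z$---is exactly the paper's strategy, and you correctly identify that the addendum is what prevents $\nabla f_z(x)$ from blowing up. The gap is in the sentence ``the smoothness of $h_k$ and of the distance function away from its cut locus shows that $z\mapsto D_v f_z$ depends continuously on $z$.'' The space $\mathbb{D}_k^n(r)$ is an Alexandrov space, not a Riemannian manifold, and keeping $\mathrm{dist}(x,z)\ge\varepsilon'/2$ does \emph{not} keep $x$ out of the cut locus of $z$. Concretely, take $x\in\mathcal{S}$, $i\neq 0$, and let $z_j$ approach $p_i\in\mathcal{S}$ from the interior of $\mathcal{D}_k^n(r)^{-}$. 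Then $\Uparrow_x^{z_j}$ consists of a single direction (the one pointing into $\mathcal{D}_k^n(r)^{-}$), while $\Uparrow_x^{p_i}$ contains two directions; for $v$ transverse to $\mathcal{S}$ the quantity $-\cos\sphericalangle(\Uparrow_x^{z_j},v)$ does not converge to $-\cos\sphericalangle(\Uparrow_x^{p_i},v)$. So your uniform-continuity step fails precisely at the boundary $\mathcal{S}$, and the compactness argument on triples $(x,v,z)$ does not go through as stated.

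The paper's proof confronts this head-on. It runs the same contradiction argument you sketch, but instead of invoking smooth continuity it uses the Alexandrov angle-convergence tools (Lemma~\ref{angle convergence} and Corollary~\ref{convergence of vectors}) and then splits into cases at the limit point $x$: if $x\notin\mathcal{S}$ the segments $xp_i$, $xA(p_i)$ are unique and there is no issue; if $x\in\mathcal{S}$ and $v\in T_x\mathcal{S}$ the two possible segments give the \emph{same} angle with $v$, so the ambiguity is harmless; and if $x\in\mathcal{S}$ with $v$ transverse to $\mathcal{S}$ one is forced to take $j(v)=0$, for which $p_0$ and $A(p_0)$ lie in the interiors of the two disks and the segments from $x$ are again unique. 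Your argument can be repaired by inserting exactly this case analysis in place of the continuity claim; without it the step does not stand.
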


\begin{proof}
If not then for each $i=0,1,\ldots ,n$ there is a sequence $%
\{z_{i}^{j}\}_{j=1}^{\infty }\subset \mathbb{D}^n_{k}(r)$ with $\mathrm{dist}%
(z_{i}^{j},p_{i})<\frac{1}{j}$ and a sequence of unit $v^{j}\in T_{x^{j}}%
\mathbb{D}_{k}^{n}(r)$ so that 
\begin{equation*}
\left\vert D_{v^{j}}f_{z_{i}^{j}}\right\vert <\frac{1}{j}.
\end{equation*}%
Choose the segments $x^{j}z_{i}^{j}$ and $x^{j}A\left( z_{i}^{j}\right) $ so
that 
\begin{eqnarray*}
\sphericalangle \left( \uparrow _{x^{j}}^{z_{i}^{j}},\text{ }v^{j}\right)
&=&\sphericalangle \left( \Uparrow _{x^{j}}^{z_{i}^{j}},\text{ }v^{j}\right) 
\text{ and } \\
\sphericalangle \left( \uparrow _{x^{j}}^{A\left( z_{i}^{j}\right) },\text{ }%
v^{j}\right) &=&\sphericalangle \left( \Uparrow _{x^{j}}^{A\left(
z_{i}^{j}\right) },\text{ }v^{j}\right) .
\end{eqnarray*}
After passing to subsequences, we have $v^{j}\rightarrow v$, $%
x^{j}\rightarrow x$ and 
\begin{eqnarray*}
x^{j}z_{i}^{j} &\rightarrow &xp_{i} \\
x^{j}A\left( z_{i}^{j}\right) &\rightarrow &xA\left( p_{i}\right) ,
\end{eqnarray*}%
for some choice of segments $xp_{i}$ and $xA\left( p_{i}\right) .$ Using
Lemma \ref{angle convergence} and Corollary \ref{convergence of vectors} we
conclude 
\begin{eqnarray}
\left\vert \sphericalangle \left( \uparrow _{x^{j}}^{z_{i}^{j}},v^{j}\right)
-\sphericalangle \left( \uparrow _{x}^{p_{i}},v\right) \right\vert &<&\tau
\left( \delta ,\tau \left( \left. \frac{1}{j}\right\vert \mathrm{dist}\left(
x,p_{i}\right) \right) \right) ,  \notag \\
\left\vert \sphericalangle \left( \uparrow _{x^{j}}^{A\left(
z_{i}^{j}\right) },v^{j}\right) -\sphericalangle \left( \uparrow
_{x}^{A\left( p_{i}\right) },v\right) \right\vert &<&\tau \left( \delta
,\tau \left( \left. \frac{1}{j}\right\vert \mathrm{dist}\left( x,A\left(
p_{i}\right) \right) \right) \right) .  \notag \\
&&  \label{close to v (2)}
\end{eqnarray}

If $x\notin \mathcal{S},$ then the segments $xp_{i}$ and $xA\left(
p_{i}\right) $ are unambiguously defined, and so the previous inequality and
the hypothesis $\left\vert D_{v^{j}}f_{z_{i}^{j}}\right\vert <\frac{1}{j},$
contradict the previous lemma and its addendum.

If $x\in \mathcal{S}$ and $v\in T_{x}\mathcal{S},$ then 
\begin{equation*}
\sphericalangle \left( \uparrow _{x}^{p_{i}},v\right) \text{ and }%
\sphericalangle \left( \uparrow _{x}^{A\left( p_{i}\right) },v\right)
\end{equation*}%
are independent of the choice of the segments $xp_{i}$ and $xA\left(
p_{i}\right) ,$ so the hypothesis $\left\vert
D_{v^{j}}f_{z_{i}^{j}}\right\vert <\frac{1}{j}$ together with the
Inequalities \ref{close to v (2)} contradict the previous lemma and its
addendum. Thus our result holds for $v\in T\mathcal{S}$ and hence also for $%
v $ in a neighborhood $U$ of $T\mathcal{S}\subset \left. T\mathbb{D}%
_{k}^{n}(r)\right\vert _{\mathcal{S}}.$

For a unit vector $v\in \left. T\mathbb{D}_{k}^{n}(r)\right\vert _{\mathcal{S%
}}\setminus U,$ we saw in the proof of the previous lemma that for some $%
\lambda >0$%
\begin{equation}
\left\vert D_{v}f_{0}\right\vert >\lambda .  \label{Big f_0 derivative}
\end{equation}%
For $x\in \mathcal{S},$ we have unique segments $xp_{0}$ and $xA\left(
p_{0}\right) ,$ so the hypothesis $\left\vert
D_{v^{j}}f_{z_{i}^{j}}\right\vert <\frac{1}{j}$ and inequalities \ref{close
to v (2)} contradict Inequality \ref{Big f_0 derivative}.
\end{proof}

Combining the proof of the previous lemma with Addendum \ref{1st addendum},
we get

\begin{addendum}
\label{2nd addendum}Let $p_{k}$ be any of $p_{1},\ldots p_{n}.$ There is an $%
\varepsilon >0$ so that for all $x\in B\left( p_{k},\varepsilon \right) \cup
B\left( A\left( p_{k}\right) ,\varepsilon \right) $ and all $v\in T_{x}%
\mathbb{D}_{k}^{n}(r)$, the index $j\left( v\right) $ in the previous lemma
can be chosen to be different from $k.$
\end{addendum}

\begin{lemma}
\label{Directional derivative lower bound}There is a $\lambda >0$ so that
for all $v\in T\tilde{M}^{\alpha }$ there is a $j\left( v\right) \in \left\{
0,1,\ldots ,n\right\} $ so that%
\begin{equation*}
D_{v}f_{j\left( v\right) ,d}^{\alpha }>\lambda \left\vert v\right\vert ,
\end{equation*}%
provided $\alpha $ is sufficiently large and $d$ is sufficiently small.
\end{lemma}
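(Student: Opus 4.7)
The plan is to argue by contradiction, extract a convergent subsequence, and reduce the failure to the preceding model--level lemma on $\mathbb{D}_{k}^{n}(r)$. Assume the conclusion fails: after extracting subsequences there exist $\alpha \to \infty$, $d \to 0$, points $x^{\alpha} \in \tilde{M}^{\alpha}$, and unit vectors $v^{\alpha} \in T_{x^{\alpha}} \tilde{M}^{\alpha}$ with
\begin{equation*}
\bigl| D_{v^{\alpha}} f^{\alpha}_{i, d}(x^{\alpha}) \bigr| < \tfrac{1}{\alpha} \qquad \text{for every } i = 0, 1, \ldots, n.
\end{equation*}
By compactness I may assume $x^{\alpha} \to x \in \mathbb{D}_{k}^{n}(r)$, and by Proposition \ref{posdeltastrainedrad} the point $x$ lies in an $(n,\delta,r)$--strained neighborhood whose strainer lifts, by continuity of comparison angles, to strainers for neighborhoods $B^{\alpha} \ni x^{\alpha}$ in $\tilde{M}^{\alpha}$.

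Next I apply Corollary \ref{convergence of vectors} to replace $v^{\alpha}$ by a nearby direction $w^{\alpha} \in \Sigma^{\mu}_{x^{\alpha}}$ with $\sphericalangle(v^{\alpha}, w^{\alpha}) < \tau(\delta, \mu)$ along whose geodesics a subsequence converges to a genuine geodesic $\gamma$ in $\mathbb{D}_{k}^{n}(r)$; set $v := \gamma'(0) \in \Sigma_{x}$. The Lipschitz bound $|\nabla f^{\alpha}_{i, d}| \leq 2$ from Proposition \ref{f_i C^1} ensures that switching $v^{\alpha}$ for $w^{\alpha}$ changes each directional derivative by at most $\tau(\delta, \mu)$. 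The crucial step is then to verify, for each fixed $i$, the convergence
\begin{equation*}
D_{w^{\alpha}} f^{\alpha}_{i, d}(x^{\alpha}) \longrightarrow D_{v} f_{p_{i}}(x).
\end{equation*}
I would differentiate under the integral sign in the definition of $f^{\alpha}_{i,d}$, expand each $D_{w^{\alpha}} f^{\alpha}_{q}(x^{\alpha})$ via the first variation formula on the Riemannian manifold $\tilde{M}^{\alpha}$, and invoke Lemma \ref{angle convergence} inside $B^{\alpha}$ to pass to the limit of the cosines of the angles between $w^{\alpha}$ and segments to $q$ and $A(q)$ pointwise in $q$. Uniform boundedness of the integrand together with $d \to 0$ then yields the displayed limit.

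Combining the three estimates produces $|D_{v} f_{p_{i}}(x)| \leq \tau(\delta, \mu)$ for every $i$, and letting $\delta, \mu \to 0$ forces all these model derivatives to vanish at $x$, contradicting the previous lemma. The main obstacle is the convergence step: the individual functions $f^{\alpha}_{q}$ are merely Lipschitz on $\tilde{M}^{\alpha}$, and for general convergent hinges one only has lower semicontinuity of angles. The strained hypothesis, exploited through Lemma \ref{angle convergence} and Corollary \ref{convergence of vectors}, is precisely what upgrades this to the two--sided estimate needed, which is why the argument must pass through the auxiliary direction $w^{\alpha}$ rather than use $v^{\alpha}$ directly.
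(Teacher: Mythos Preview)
Your overall architecture matches the paper's exactly: argue by contradiction, pass from $v^{\alpha}$ to a nearby $w^{\alpha}\in\Sigma_{x^{\alpha}}^{\mu}$ via Corollary~\ref{convergence of vectors}, use the bound $|\nabla f_{i,d}^{\alpha}|\le 2$ to transfer the smallness, extract a limit direction $w$ in $\mathbb{D}_{k}^{n}(r)$, and contradict the model lemma. The difference, and the place where your argument is incomplete, is the ``crucial step.'' You propose to differentiate under the integral and pass to the limit ``pointwise in $q$,'' but this is not well-posed: the domain $B(p_{i}^{\alpha},d)$ sits in a space that changes with $\alpha$, so there is no fixed $q$ to hold. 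The paper bypasses this entirely with the Mean Value Theorem for integrals: for the single relevant index $j(w)$ it picks one point $z_{j(w)}^{\alpha}\in B(p_{j(w)}^{\alpha},d)$ with
\[
D_{w^{\alpha}}f_{z_{j(w)}^{\alpha}}^{\alpha}=D_{w^{\alpha}}f_{j(w),d}^{\alpha},
\]
and then applies Lemma~\ref{angle convergence} to the single convergent hinge $(x^{\alpha}z_{j(w)}^{\alpha},\gamma_{w^{\alpha}})$ (and its $A$-image). This gives the derivative convergence cleanly, with no integration-in-the-limit issues.

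There is a second, related gap. You assert the convergence $D_{w^{\alpha}}f_{i,d}^{\alpha}\to D_{v}f_{p_{i}}$ for \emph{every} $i$, but this can fail: if $x$ is close to (or equal to) some $p_{k}$, the hinges from $x^{\alpha}$ to points of $B(p_{k}^{\alpha},d)$ have one side of length $\to 0$, and Lemma~\ref{angle convergence} gives no control. The paper handles this by invoking Addendum~\ref{2nd addendum} to choose $j(w)\neq k$, so that $\mathrm{dist}(x,p_{j(w)})$ and $\mathrm{dist}(x,A(p_{j(w)}))$ are bounded below by $100d>\mu$; only for this good index is the angle-convergence estimate applied. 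You need the same restriction.
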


\begin{proof}
If the lemma were false, then there would be a sequence of unit vectors $%
\left\{ v^{\alpha }\right\} _{\alpha =1}^{\infty }$ with $v^{\alpha }\in
T_{x^{\alpha }}\tilde{M}^{\alpha }$ such that for all $i,$%
\begin{equation*}
\left\vert D_{v^{\alpha }}f_{i,d}^{\alpha }\right\vert <\tau \left( \frac{1}{%
\alpha },d\right) .
\end{equation*}%
Let $\lim_{\alpha \rightarrow \infty }x^{\alpha }=x\in \mathbb{D}_{k}^{n}(r)$%
. By Corollary \ref{convergence of vectors}, for any $\mu >0$ there is a
sequence $\left\{ w^{\alpha }\right\} _{\alpha =1}^{\infty }$ with $%
w^{\alpha }\in \Sigma _{x^{\alpha }}^{\mu }$ such that 
\begin{equation*}
\sphericalangle \left( v^{\alpha },w^{\alpha }\right) <\tau \left( \delta
,\mu \right) .
\end{equation*}%
Since $\left\vert \nabla f_{i,d}^{\alpha }\right\vert \leq 2,$ 
\begin{equation}
\left\vert D_{w^{\alpha }}f_{i,d}^{\alpha }\right\vert <\tau \left( \delta
,\mu ,\frac{1}{\alpha },d\right)  \label{smaller deriviative}
\end{equation}%
for all $i$. After passing to a subsequence, we conclude that $\left\{
\gamma _{w^{\alpha }}|_{\left[ 0,\mu \right] }\right\} _{\alpha =1}^{\infty
} $ converges to a segment $\gamma _{w}|_{\left[ 0,\mu \right] }.$ By the
previous lemma, there is a $\lambda >0$ and a $j\left( w\right) $ so that
for all $z\in B(p_{j(w)},d),$ 
\begin{equation}
\left\vert D_{w}f_{z}\right\vert >\lambda \left\vert w\right\vert ,
\label{big derivative}
\end{equation}%
provided $d$ is small enough. Moreover, by Addendum \ref{2nd addendum} we
may assume that 
\begin{eqnarray}
\mathrm{dist}\left( x,p_{j(w)}\right) &>&100d>\mu \text{ and}  \notag \\
\mathrm{dist}\left( x,A\left( p_{j(w)}\right) \right) &>&100d>\mu .\text{ %
\label{not dumb index}}
\end{eqnarray}%
By the Mean Value Theorem, there is a $z_{j\left( w\right) }^{\alpha }\in
B\left( p_{j\left( w\right) }^{\alpha },d\right) $ with 
\begin{equation}
D_{w^{\alpha }}f_{z_{j\left( w\right) }^{\alpha }}^{\alpha }=D_{w^{\alpha
}}f_{j\left( w\right) ,d}^{\alpha }.  \label{Same Derivative}
\end{equation}%
Choose segments $x^{\alpha }z_{j\left( w\right) }^{\alpha }$ and $x^{\alpha }%
{A}(z_{j\left( w\right) }^{\alpha })$ in $\tilde{M}^{\alpha }$ so that 
\begin{eqnarray*}
\sphericalangle \left( \uparrow _{x^{\alpha }}^{z_{j\left( w\right)
}^{\alpha }},w^{\alpha }\right) &=&\sphericalangle \left( \Uparrow
_{x^{\alpha }}^{z_{j\left( w\right) }^{\alpha }},w^{\alpha }\right) \text{
and} \\
\sphericalangle \left( \uparrow _{x^{\alpha }}^{{A}(z_{j\left( w\right)
}^{\alpha })},w^{\alpha }\right) &=&\sphericalangle \left( \Uparrow
_{x^{\alpha }}^{{A}(z_{j\left( w\right) }^{\alpha })},w^{\alpha }\right) .
\end{eqnarray*}%
After passing to a subsequence, we may assume that for some $z_{j\left(
w\right) }\in B(p_{j\left( w\right) },d)$, $x^{\alpha }z_{j\left( w\right)
}^{\alpha }$ and $x^{\alpha }{A}(z_{j\left( w\right) }^{\alpha })$ converge
to segments $xz_{j\left( w\right) }$ and $x{A}(z_{j\left( w\right) })$,
respectively. By Lemma \ref{angle convergence}, 
\begin{eqnarray*}
\left\vert \sphericalangle ( \uparrow _{x^{\alpha }}^{z_{j\left( w\right)
}^{\alpha }},\gamma _{w^{\alpha }}^{\prime }\left( 0\right))
-\sphericalangle ( \uparrow _{x}^{z_{j\left( w\right) }},\gamma _{w}^{\prime
}\left( 0\right)) \right\vert &<&\tau \left( \delta ,\tau \left( {1/\alpha }%
|\mu ,\mathrm{dist}\left( x,z_{j\left( w\right) }\right) \right) \right) \\
\left\vert \sphericalangle ( \uparrow _{x^{\alpha }}^{{A}(z_{j\left(
w\right) }^{\alpha })},\gamma _{w^{\alpha }}^{\prime }\left( 0\right) )
-\sphericalangle ( \uparrow _{x}^{A\left( z_{j\left( w\right) }\right)
},\gamma _{w}^{\prime }\left( 0\right)) \right\vert &<&\tau \left( \delta
,\tau \left( {1/\alpha }|\mu ,\mathrm{dist}\left( x,A\left( z_{j\left(
w\right) }\right) \right) \right) \right).
\end{eqnarray*}%
Combining the previous two sets of displays with \ref{not dumb index} 
\begin{equation}
\left\vert D_{w^{\alpha }}f_{z_{j\left( w\right) }^{\alpha }}^{\alpha
}-D_{w}f_{z_{j\left( w\right) }}\right\vert <\tau \left( \delta ,\tau \left( 
{1/\alpha }|\mu \right) \right) .  \label{derivative convergence}
\end{equation}%
So by Equation \ref{Same Derivative}, 
\begin{equation*}
\left\vert D_{w^{\alpha }}f_{j\left( w\right) ,d}^{\alpha
}-D_{w}f_{z_{j\left( w\right) }}\right\vert <\tau \left( \delta ,\tau \left( 
{1/\alpha }|\mu \right) \right) ,
\end{equation*}%
but this contradicts Inequalities \ref{smaller deriviative} and \ref{big
derivative}.
\end{proof}

The first claim of Key Lemma \ref{CC Key Lemma} follows by combining the
previous lemma with the fact that the differentials of the $\varphi
_{x^{\alpha }}^{\eta }$'s are almost isometries.

\begin{remark}
\label{index choice}Note that when $x^{\alpha }$ is close to $p_{k}$ or $%
A\left( p_{k}\right) ,$ the desired estimate%
\begin{equation*}
\left\vert d(P_{\hat{\imath}_{x^{\alpha }}}\circ F)_{\varphi _{{x^{\alpha }}%
}^{\eta }\left( x^{\alpha }\right) }\left( v\right) \right\vert >\lambda
\left\vert v\right\vert
\end{equation*}%
holds with $P_{\hat{\imath}_{x^{\alpha }}}=P_{\hat{k}}.$ This follows from
Addendum \ref{2nd addendum} and the proof of the previous lemma.
\end{remark}

\subsection*{ Equicontinuity of Differentials}

In this subsection, we establish the second part of the key lemma. If $%
x^{\alpha }$ is not close to one of the $p_{k}s$ or $A\left( p_{k}\right) s$
we will show the stronger estimate%
\begin{equation}
\left\vert d\left( F\right) _{\varphi _{x^{\alpha }}^{\eta }\left( y\right)
}\left( v\right) -d\left( F\right) _{\varphi _{x^{\alpha }}^{\eta }\left(
x^{\alpha }\right) }\left( v\right) \right\vert <\frac{\lambda }{2}%
\left\vert v\right\vert .  \label{away from base}
\end{equation}%
So at such points, the second part of the key lemma holds with \emph{any }%
choice of coordinate projection $P_{\hat{\imath}_{x^{\alpha }}}.$

For $x^{\alpha }$ close to $p_{k}$ or $A\left( p_{k}\right) ,$ we will show 
\begin{equation}
\left\vert d\left( P_{\hat{k}}\circ F\right) _{\varphi _{x^{\alpha }}^{\eta
}\left( y\right) }\left( v\right) -d\left( P_{\hat{k}}\circ F\right)
_{\varphi _{x^{\alpha }}^{\eta }\left( x^{\alpha }\right) }\left( v\right)
\right\vert <\frac{\lambda }{2}\left\vert v\right\vert ,  \label{at base}
\end{equation}%
where $\lambda $ is the constant whose existence was established in the
previous section. Together with remark \ref{index choice}, this will
establish the key lemma.

Suppose $B\subset \mathbb{D}_{k}^{n}(r)$ is $\left( n,\delta ,r\right) $%
--strained by $\left\{ \left( a_{i},b_{i}\right) \right\} _{i=1}^{n}$. Let $%
x,y\in B$ and let 
\begin{equation*}
\varphi ^{\eta }:B\longrightarrow \mathbb{R}^{n}
\end{equation*}%
be the map defined in \ref{Otsu-Shioya} and \cite{OS}. Set 
\begin{equation*}
P_{x,y}:=\left( d\varphi ^{\eta }\right) _{y}^{-1}\circ \left( d\varphi
^{\eta }\right) _{x}:T_{x}\mathbb{D}_{k}^{n}(r)\rightarrow T_{y}\mathbb{D}%
_{k}^{n}(r).
\end{equation*}%
It follows that $P_{x,y}$ is a $\tau (\delta ,\eta )$--isometry.

\begin{lemma}
\label{measure convergence}Let $B\subset \mathbb{D}_{k}^{n}(r)$ be $\left(
n,\delta ,r\right) $--strained by $\left\{ \left( a_{i},b_{i}\right)
\right\} _{i=1}^{n}.$ Given $\varepsilon >0$ and $x\in B,$ there is a $\rho
\left( x,\varepsilon \right) >0$ so that the following holds.

For all $k\in \left\{ 0,1,\ldots ,n\right\} ,$ there is a subset $%
E_{k,x}\subset \left\{ B\left( p_{k},d\right) \cup B\left( A\left(
p_{k}\right) ,d\right) \right\} $ with measure $\mu \left( E_{k,x}\right)
<\varepsilon $ so that for all $z\in B\left( p_{k},d\right) \setminus
E_{k,x} $, all $y\in B\left( x,\rho \left( x,\varepsilon \right) \right) ,$
and all $v\in \Sigma _{x},$ 
\begin{eqnarray*}
\left\vert \sphericalangle \left( v,\uparrow _{x}^{z}\right)
-\sphericalangle \left( P_{x,y}\left( v\right) ,\uparrow _{y}^{z}\right)
\right\vert &<&\tau \left( \left. \varepsilon ,\delta ,\eta \right\vert 
\mathrm{dist}\left( x,z\right) \right) \text{ and} \\
\left\vert \sphericalangle \left( v,\uparrow _{x}^{A\left( z\right) }\right)
-\sphericalangle \left( P_{x,y}\left( v\right) ,\uparrow _{y}^{A\left(
z\right) }\right) \right\vert &<&\tau \left( \varepsilon ,\delta ,\eta |%
\mathrm{dist}\left( x,A(z)\right) \right) .
\end{eqnarray*}
\end{lemma}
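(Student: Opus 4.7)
The plan is to reinterpret the comparison of $\sphericalangle(v, \uparrow_x^z)$ and $\sphericalangle(P_{x,y}(v), \uparrow_y^z)$ as a comparison of ``strainer coordinates''---the cosines of angles with the strainer directions $\uparrow_u^{a_i}$---and then to bound the change in those coordinates as $y$ is perturbed, using Corollary \ref{Angle continuity} and Proposition \ref{geodesic continuity}. Since every tangent space of $\mathbb{D}_{k}^{n}(r)$ is Euclidean and every space of directions is $S^{n-1}$, Theorem \ref{BGP--SOY} implies that the angle between any two directions in any $\Sigma_u$ is determined up to $\tau(\delta, \eta)$ by their strainer coordinates.

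First I would verify that $P_{x,y}$ preserves strainer coordinates up to $\tau(\delta,\eta)$. Because $(d\varphi^\eta)_u$ sends a direction $w \in \Sigma_u$ to the vector whose $i$th coordinate is, up to $\tau(\delta,\eta)$, equal to $-\cos\sphericalangle(w, \uparrow_u^{a_i})$, the composition $P_{x,y} = (d\varphi^\eta)_y^{-1}\circ(d\varphi^\eta)_x$ is characterized as the $\tau(\delta,\eta)$-isometry $T_x \to T_y$ carrying $\uparrow_x^{a_i}$ to $\uparrow_y^{a_i}$; in particular, the strainer coordinates of $v$ and of $P_{x,y}(v)$ agree up to $\tau$.

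Next I would construct the exceptional set. Let $C_{x,k}$ consist of those $z \in B(p_k,d) \cup B(A(p_k),d)$ for which either $\Uparrow_x^z$ or $\Uparrow_x^{A(z)}$ fails to be essentially a single direction; since $\mathbb{D}_{k}^{n}(r)$ is a smooth Riemannian manifold of constant curvature away from the singular sphere $\mathcal{S}$, and the constant-curvature model makes the cut-locus structure across $\mathcal{S}$ explicit, $C_{x,k}$ has $n$-dimensional Hausdorff measure zero. Define $E_{k,x}$ to be an open $A$-symmetric neighborhood of $C_{x,k}$ together with small balls around $x$ and $A(x)$, chosen so that $\mu(E_{k,x}) < \varepsilon$ and so that $\mathrm{dist}(x,z)$ and $\mathrm{dist}(x, A(z))$ are bounded below by some $\varepsilon'(x,\varepsilon) > 0$ for $z \notin E_{k,x}$.

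For such $z$, the segments $xz$ and $xA(z)$ are essentially unique, and applying Corollary \ref{Angle continuity} to the hinge at $x$ formed by $xa_i$ and $xz$ (and analogously for $xA(z)$) gives
\begin{equation*}
\bigl|\sphericalangle(a_i, x, z) - \sphericalangle(a_i, y, z)\bigr| < \tau(\delta, \mathrm{dist}(x,y) \,|\, r, \mathrm{dist}(x,z)).
\end{equation*}
Shrinking $\rho(x,\varepsilon)$ makes the right-hand side smaller than $\tau(\varepsilon, \delta, \eta \,|\, \mathrm{dist}(x,z))$, so the strainer coordinates of $\uparrow_y^z$ match those of $\uparrow_x^z$ up to $\tau$; combined with the previous paragraph, the same holds for the pairs $(P_{x,y}(v), \uparrow_y^z)$ and $(v, \uparrow_x^z)$. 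Applying the angle-recovery formula of Theorem \ref{BGP--SOY} to both pairs and subtracting yields the desired inequality, uniformly in $v \in \Sigma_x$ because the formula depends only on strainer coordinates. The main obstacle is the construction of $E_{k,x}$: one must simultaneously ensure that $\uparrow_x^z$ is essentially unique (so the left-hand angle is even well-defined) and that $\mathrm{dist}(x,z)$ stays bounded below (so it can be treated as a fixed parameter inside the permitted $\tau$), all while keeping $\mu(E_{k,x}) < \varepsilon$; once this is in place, the rest is routine angle continuity from Section 3.
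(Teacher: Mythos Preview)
Your proposal is correct and follows essentially the same route as the paper: define $E_{k,x}$ as a small neighborhood of the cut-locus set $\{z : z \text{ or } A(z) \in \mathrm{Cutlocus}(x)\}$, use Proposition \ref{geodesic continuity} together with Corollary \ref{Angle continuity} to show that the strainer angles of $\uparrow_x^z$ and $\uparrow_y^z$ agree up to $\tau$, observe that $P_{x,y}$ preserves strainer coordinates up to $\tau(\delta,\eta)$, and then recover the conclusion from the fact that strainer coordinates determine angles (the paper phrases this last step via $(d\varphi^\eta)$ and a triangle inequality rather than via Theorem \ref{BGP--SOY}, but these are equivalent). The only cosmetic difference is that you throw small balls around $x$ and $A(x)$ into $E_{k,x}$ to bound $\mathrm{dist}(x,z)$ below, whereas the paper simply leaves $\mathrm{dist}(x,z)$ as the fixed parameter inside $\tau$.
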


\begin{proof}
Let $C_{x}=\left\{ z|z\in \mathrm{Cutlocus}\left( x\right) \text{ or }%
A\left( z\right) \in \mathrm{Cutlocus}\left( x\right) \right\} $ and set 
\begin{equation*}
E_{k,x}=B\left( C_{x},\nu \right) \cap \left\{ B\left( p_{k},d\right) \cup
B\left( A\left( p_{k}\right) ,d\right) \right\} .
\end{equation*}%
Choose $\nu >0$ so that $\mu \left( E_{k,x}\right) <\varepsilon .$

By Proposition \ref{geodesic continuity}, for each $z\in B\left(
p_{k},d\right) \setminus E_{k,x},$ there is a $\rho \left( x,z,\varepsilon
\right) $ so that for all $y\in B\left( x,\rho \left( x,z,\varepsilon
\right) \right) $ and any choice of segment $zy,$ 
\begin{equation*}
\mathrm{dist}\left( zx,zy\right) <\varepsilon ,
\end{equation*}%
where $zx$ is the unique segment from $z$ to $x.$

Making $\rho \left( x,z,\varepsilon \right) $ smaller and using Corollary %
\ref{Angle continuity}, it follows that for any $\tilde{a}_{i},\bar{a}%
_{i}\in B\left( a_{i},\eta \right) ,$%
\begin{eqnarray*}
\left\vert \sphericalangle \left( \Uparrow _{x}^{\tilde{a}_{i}},\uparrow
_{x}^{z}\right) -\sphericalangle \left( \Uparrow _{y}^{\bar{a}_{i}},\uparrow
_{y}^{z}\right) \right\vert &<&\tau \left( \delta ,\varepsilon ,\eta |%
\mathrm{dist}\left( x,z\right) ,\mathrm{dist}\left( y,z\right) \right) \\
&=&\tau \left( \delta ,\varepsilon ,\eta |\mathrm{dist}\left( x,z\right)
\right) .
\end{eqnarray*}%
It follows that%
\begin{equation*}
\left\vert \left( d\varphi ^{\eta }\right) _{x}\left( \uparrow
_{x}^{z}\right) -\left( d\varphi ^{\eta }\right) _{y}\left( \uparrow
_{y}^{z}\right) \right\vert <\tau \left( \delta ,\varepsilon ,\eta |\mathrm{%
dist}\left( x,z\right) \right) ,
\end{equation*}%
and hence 
\begin{equation*}
\sphericalangle \left( P_{x,y}\left( \uparrow _{x}^{z}\right) ,\uparrow
_{y}^{z}\right) =\sphericalangle \left( \left( d\varphi ^{\eta }\right)
_{y}^{-1}\circ \left( d\varphi ^{\eta }\right) _{x}\left( \uparrow
_{x}^{z}\right) ,\left( \uparrow _{y}^{z}\right) \right) <\tau \left( \delta
,\varepsilon ,\eta |\mathrm{dist}\left( x,z\right) \right) .
\end{equation*}
So for any $v\in \Sigma _{x},$ 
\begin{eqnarray*}
\left\vert \sphericalangle \left( v,\uparrow _{x}^{z}\right)
-\sphericalangle \left( P_{x,y}\left( v\right) ,\uparrow _{y}^{z}\right)
\right\vert &\leq &\left\vert \sphericalangle \left( v,\uparrow
_{x}^{z}\right) -\sphericalangle \left( P_{x,y}\left( v\right)
,P_{x,y}\left( \uparrow _{x}^{z}\right) \right) \right\vert + \\
&&\left\vert \sphericalangle \left( P_{x,y}\left( v\right) ,P_{x,y}\left(
\uparrow _{x}^{z}\right) \right) -\sphericalangle \left( P_{x,y}\left(
v\right) ,\uparrow _{y}^{z}\right) \right\vert \\
&<&\tau \left( \delta ,\eta \right) +\tau \left( \varepsilon ,\delta ,\eta |%
\mathrm{dist}\left( x,z\right) \right) \\
&=&\tau \left( \varepsilon ,\delta ,\eta |\mathrm{dist}\left( x,z\right)
\right) .
\end{eqnarray*}

Using Proposition \ref{geodesic continuity} and the precompactness of $%
B\left( p_{k},d\right) \setminus E_{k,x},$ we can then choose $\rho \left(
x,z,\varepsilon \right) $ to be independent of $z\in B\left( p_{k},d\right)
\setminus E_{k,x}.$ A similar argument gives the second inequality.
\end{proof}

\begin{corollary}
Given any $\varepsilon >0$, there is a $\rho (\varepsilon )>0$ so that for
any $x\in \mathbb{D}_{k}^{n}(r)$, $y\in B(x,\rho (\varepsilon ))$, and $z\in
B(p_{i},d)\setminus E_{i,x},$ we have 
\begin{equation*}
\left\vert D_{v}f_{z}-D_{P_{x,y}\left( v\right) }f_{z}\right\vert <\tau
\left( \varepsilon ,\delta ,\eta |\mathrm{dist}\left( z,x\right) ,\mathrm{%
dist}\left( A\left( z\right) ,x\right) \right)
\end{equation*}%
for all unit vectors $v\in \Sigma _{x}$.
\end{corollary}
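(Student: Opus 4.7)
The plan is to expand $D_v f_z$ via the first variation formula for distance functions and then bound the resulting expression term by term using the preceding lemma and the triangle inequality.

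Concretely, since $z \notin E_{i,x}$ the segments $xz$ and $xA(z)$ are unique and vary continuously with their endpoints; after shrinking $\rho(\varepsilon)$ if necessary, the same holds for $yz$ and $yA(z)$ whenever $y \in B(x,\rho(\varepsilon))$. The first variation formula, applied to the Lipschitz function $f_z = h_k\circ \mathrm{dist}(A(z),\cdot) - h_k\circ\mathrm{dist}(z,\cdot)$, then gives
\begin{equation*}
D_v f_z = -h_k'(\mathrm{dist}(A(z),x))\cos\sphericalangle(v,\uparrow_x^{A(z)}) + h_k'(\mathrm{dist}(z,x))\cos\sphericalangle(v,\uparrow_x^z),
\end{equation*}
with the analogous formula at $y$ (with $v$ replaced by $P_{x,y}(v)$).

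Subtracting, the difference splits into two term-pairs, one involving $z$ and the other $A(z)$. For each pair I would telescope via $\cos\alpha\cdot B - \cos\alpha'\cdot B' = \cos\alpha\,(B-B') + (\cos\alpha-\cos\alpha')B'$ to isolate the cosine discrepancy from the $h_k'$ discrepancy. The angle discrepancy $|\sphericalangle(v,\uparrow_x^z) - \sphericalangle(P_{x,y}(v),\uparrow_y^z)|$ is bounded by $\tau(\varepsilon,\delta,\eta\,|\,\mathrm{dist}(x,z))$ via the previous lemma (and cosine is $1$-Lipschitz), while $|h_k'(\mathrm{dist}(z,x)) - h_k'(\mathrm{dist}(z,y))|$ is bounded by $\rho(\varepsilon)$ times the Lipschitz constant of $h_k'$ on a compact subinterval of $(0,\infty)$ whose lower endpoint is determined by $\mathrm{dist}(z,x)$ (hence contributing to the conditional parameters of $\tau$). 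Combining these with the uniform boundedness of $|h_k'|$ and $|\cos|$ on the relevant range, and doing the symmetric treatment of the $A(z)$-pair, yields the bound in the stated form.

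The only subtlety I anticipate is justifying the first variation formula at base points $x$ or $y$ lying on the singular set $\mathcal{S}$ of $\mathbb{D}_k^n(r)$. However, the Alexandrov-geometric version of that formula needs only uniqueness and continuous dependence of the relevant segments on their endpoints --- precisely what the exclusion of $z \in E_{i,x}$ provides at $x$, and what Proposition \ref{geodesic continuity} (used to define $\rho(\varepsilon)$ in the preceding lemma) provides at $y$.
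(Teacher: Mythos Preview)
Your approach is essentially the paper's: expand $D_vf_z$ via the first variation formula for distance functions and bound the resulting angle discrepancies by the preceding lemma. The paper's proof is terser, simply citing ``the formula for directional derivatives of distance functions, the previous lemma, and the chain rule,'' together with the compactness of $\mathbb{D}_k^n(r)$ to make $\rho$ independent of $x$.

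One small point deserves attention. You assert that after shrinking $\rho(\varepsilon)$ the segments $yz$ and $yA(z)$ are also unique; this is not obvious and would require its own argument. The paper sidesteps the issue entirely: rather than claiming uniqueness at $y$, it chooses segments $yz$ and $yA(z)$ realizing $\sphericalangle(\Uparrow_y^z,P_{x,y}(v))$ and $\sphericalangle(\Uparrow_y^{A(z)},P_{x,y}(v))$, which is exactly what enters the directional derivative. Since the angle estimate in the preceding lemma holds for \emph{any} choice of $\uparrow_y^z$ (its proof invokes Proposition~\ref{geodesic continuity} for arbitrary segments $zy$), your telescoping argument goes through unchanged with this adjustment. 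The Alexandrov first variation formula itself does not require uniqueness of segments, so your closing remark about the singular set $\mathcal{S}$ is also not a genuine obstacle.
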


\begin{proof}
Since $\mathbb{D}_{k}^{n}(r)$ is compact, the $\rho (\varepsilon ,x)$ from
the previous lemma can be chosen to be independent of $x$.

Given $x\in \mathbb{D}_{k}^{n}(r)$, $y\in B(x,\rho (\varepsilon )),$ and $%
v\in \Sigma _{x},$ choose segments $yz$ and $yA(z)$ so that 
\begin{eqnarray*}
\sphericalangle \left( \uparrow _{y}^{z},P_{x,y}\left( v\right) \right)
&=&\sphericalangle \left( \Uparrow _{y}^{z},P_{x,y}\left( v\right) \right) 
\text{ and} \\
\sphericalangle \left( \uparrow _{y}^{A\left( z\right) },P_{x,y}\left(
v\right) \right) &=&\sphericalangle \left( \Uparrow _{y}^{A\left( z\right)
},P_{x,y}\left( v\right) \right) .
\end{eqnarray*}%
Since the segments $xz$ and $xA(z)$ are unique, the result follows from the
formula for directional derivatives of distance functions, the previous
lemma, and the chain rule.
\end{proof}

We can lift a strainer from $\mathbb{D}_{k}^{n}(r)$ to any $\tilde{M}%
^{\alpha }$ if \textrm{dist}$_{GH}\left( \tilde{M}^{\alpha },\mathbb{D}%
_{k}^{n}(r)\right) $ is sufficiently small. So if $x^{\alpha }\ $and $%
y^{\alpha }$ are sufficiently close, we define 
\begin{equation*}
P_{x^{\alpha },y^{\alpha }}:=\left( d\varphi ^{\eta }\right) _{y^{\alpha
}}^{-1}\circ \left( d\varphi ^{\eta }\right) _{x^{\alpha }}:T_{x^{\alpha }}%
\tilde{M}^{\alpha }\rightarrow T_{y^{\alpha }}\tilde{M}^{\alpha }.
\end{equation*}

\begin{lemma}
Let $i$ be in $\left\{ 0,\ldots ,n\right\} .$ There is a $\rho >0$ so that
for any $x^{\alpha }\in \tilde{M}^{\alpha }$, any $y^{\alpha }\in
B(x^{\alpha },\rho ),$ and any unit $v^{\alpha }\in T_{x^{\alpha }}\tilde{M}%
^{\alpha }$ we have 
\begin{equation*}
\left\vert D_{v^{\alpha }}f_{i,d}^{\alpha }-D_{P_{x^{\alpha },y^{a}}\left(
v^{\alpha }\right) }f_{i,d}^{\alpha }\right\vert <\tau \left( \rho ,\frac{1}{%
\alpha },\delta ,\eta |\mathrm{dist}\left( x^{\alpha },p_{i}^{\alpha
}\right) ,\mathrm{dist}\left( x^{\alpha },A\left( p_{i}^{\alpha }\right)
\right) \right) ,
\end{equation*}%
provided $d$ is sufficiently small.
\end{lemma}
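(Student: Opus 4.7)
The plan is to reduce this to the preceding corollary by differentiating under the integral sign and splitting the ball of integration into a region where the pointwise comparison is controlled and a small-measure region near the cut locus.

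First, using Proposition~\ref{f_i C^1}, I would differentiate under the integral in the definition \ref{bar--f--alpha} of $f_{i,d}^{\alpha}$, which lets me express the difference on the left-hand side as
\begin{equation*}
\frac{1}{\mathrm{vol}\,B(p_{i}^{\alpha},d)}\int_{q^{\alpha}\in B(p_{i}^{\alpha},d)}\bigl(D_{v^{\alpha}}f_{q^{\alpha}}^{\alpha}(x^{\alpha}) - D_{P_{x^{\alpha},y^{\alpha}}(v^{\alpha})}f_{q^{\alpha}}^{\alpha}(y^{\alpha})\bigr)\,dq^{\alpha}.
\end{equation*}
It therefore suffices to estimate the integrand pointwise for most $q^{\alpha}$, and to use a measure estimate plus a uniform Lipschitz bound on the exceptional set.

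Next, partition $B(p_{i}^{\alpha},d)$ into a good set $G^{\alpha}$, on which neither $q^{\alpha}$ nor $A(q^{\alpha})$ lies within $\nu$ of the cut locus of $x^{\alpha}$, and a bad set $E^{\alpha}$ which is its complement. On $G^{\alpha}$ the segments $x^{\alpha}q^{\alpha}$, $y^{\alpha}q^{\alpha}$, $x^{\alpha}A(q^{\alpha})$, $y^{\alpha}A(q^{\alpha})$ are essentially unique and vary continuously (Proposition~\ref{geodesic continuity}). Using the formula $D_{v}\mathrm{dist}(q,\cdot) = -\cos\sphericalangle(\uparrow^{q},v)$ together with Corollary~\ref{Angle continuity}, Lemma~\ref{angle convergence}, and the fact that $P_{x^{\alpha},y^{\alpha}}$ is an almost-isometry (Remark following Theorem~\ref{BGP--SOY}), one estimates the difference of angles at $x^{\alpha}$ and $y^{\alpha}$ by $\tau(\rho,1/\alpha,\delta,\eta)$; multiplying by $h_{k}'(\mathrm{dist}(\cdot,x^{\alpha}))$ (which is bounded in terms of $\mathrm{dist}(x^{\alpha},p_{i}^{\alpha})$ and $\mathrm{dist}(x^{\alpha},A(p_{i}^{\alpha}))$) gives the required pointwise bound on the integrand on $G^{\alpha}$. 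On $E^{\alpha}$ the integrand is controlled by $4\|h_{k}'\|_{\infty,[0,2r]}$, so its contribution to the average is bounded by a universal constant times $\mathrm{vol}(E^{\alpha})/\mathrm{vol}(B(p_{i}^{\alpha},d))$.

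The main obstacle is showing that $\mathrm{vol}(E^{\alpha})/\mathrm{vol}(B(p_{i}^{\alpha},d))$ can be made arbitrarily small uniformly in $\alpha$. In the limit $\mathbb{D}_{k}^{n}(r)$ this follows because the cut locus has measure zero, as used in the preceding corollary; to push it down to the $\tilde{M}^{\alpha}$ I would invoke noncollapsing Gromov--Hausdorff convergence to deduce volume convergence of the balls $B(p_{i}^{\alpha},d)$, and use Proposition~\ref{geodesic continuity} to show that if a limit point $q\in B(p_{i},d)$ avoids a $\nu$-neighborhood of the cut locus of $x$, then for large $\alpha$ its approximants $q^{\alpha}$ avoid a $\nu/2$-neighborhood of the cut locus of $x^{\alpha}$. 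Thus the bad set $E^{\alpha}$ is asymptotically contained in the preimage of the bad set from the preceding corollary, whose measure is $<\varepsilon$. Combining the two contributions and letting $\rho,\eta,1/\alpha,d\to 0$ (in the $\tau$ sense, with the distances to $p_{i}^{\alpha}$ and $A(p_{i}^{\alpha})$ held fixed) completes the argument.
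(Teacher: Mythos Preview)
Your direct integral-splitting approach differs from the paper's, which argues by contradiction: assuming the estimate fails, the paper replaces $v^{\alpha}$ and $P_{x^{\alpha},y^{\alpha}}(v^{\alpha})$ by nearby directions $w^{\alpha},\tilde{w}^{\alpha}\in\Sigma^{\mu}$ via Corollary~\ref{convergence of vectors}, and then observes that if the averaged difference exceeds $C$ there must exist a \emph{single} point $z^{\alpha}\in B(p_{i}^{\alpha},d)$ lying at Hausdorff distance $>2\nu$ from (the Gromov--Hausdorff image of) the limit's bad set $E_{i,x}$ with pointwise difference $\geq C/2$. Passing that one point to the limit and applying Lemma~\ref{angle convergence} together with the preceding corollary yields the contradiction. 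The key economy is that the paper never analyzes the cut locus in $\tilde{M}^{\alpha}$; the only bad set that appears is $E_{i,x}\subset\mathbb{D}_{k}^{n}(r)$.

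Your argument has a gap at exactly this point. The claim ``if $q$ avoids a $\nu$-neighborhood of the cut locus of $x$, then $q^{\alpha}$ avoids a $\nu/2$-neighborhood of the cut locus of $x^{\alpha}$'' is not a consequence of Proposition~\ref{geodesic continuity}: that proposition says any segment between nearby endpoints is close to \emph{some} segment between the limit endpoints, but does not give uniqueness of segments in the approximant. Cut loci are not lower-semicontinuous under Gromov--Hausdorff convergence in general. You also need control at $y^{\alpha}$, which your definition of $E^{\alpha}$ does not provide. The repair is to drop the cut locus of $x^{\alpha}$ altogether and define $E^{\alpha}$ as the Gromov--Hausdorff preimage of $E_{i,x}$; then on $G^{\alpha}$ every limit point $q$ has unique segments to $x$ and $A(x)$, so Lemma~\ref{angle convergence} and the preceding corollary give the pointwise bound. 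Making this uniform over $G^{\alpha}$ is essentially the compactness argument the paper packages as a contradiction, and the implicit volume-convergence step you flag is indeed present (and equally implicit) in the paper's pigeonhole selection of $z^{\alpha}$.
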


\begin{proof}
If not, then for any $\rho >0$ and some $i=0,1,\ldots ,n$, there would be a
sequence of points $x^{\alpha }\rightarrow x\in \mathbb{D}_{k}^{n}(r)$, a
sequence of unit vectors $\left\{ v^{\alpha }\right\} _{\alpha =1}^{\infty }$
and a constant $C>0$ that is independent of $\alpha $, $\delta ,$ and $\eta $
so that 
\begin{eqnarray}
\left\vert D_{v^{\alpha }}f_{i,d}^{\alpha }-D_{P_{x^{\alpha },y^{a}}\left(
v^{\alpha }\right) }f_{i,d}^{\alpha }\right\vert &\geq &C,  \notag \\
\mathrm{dist}\left( x,p_{i}\right) &\geq &C,\text{ and}  \notag \\
\mathrm{dist}\left( x,A\left( p_{i}\right) \right) &\geq &C
\label{far from
p_i}
\end{eqnarray}%
for some $y^{\alpha }\in B(x^{\alpha },\rho )$. Choose $\varepsilon >0$ and
take $\rho <\rho (\varepsilon )$ where $\rho (\varepsilon )$ is from the
previous corollary. We assume $B\left( x,\rho (\varepsilon )\right) $ is $%
(n,\delta ,r)$--strained. Let $y=\lim y^{\alpha }$ and $\mu >0$ be
sufficiently small. By corollary \ref{convergence of vectors}, there are
sequences $\left\{ w^{\alpha }\right\} _{\alpha =1}^{\infty }\in \Sigma
_{x^{\alpha }}^{\mu }$ and $\left\{ \tilde{w}^{\alpha }\right\} _{\alpha
=1}^{\infty }\in \Sigma _{y^{\alpha }}^{\mu }$ so that 
\begin{eqnarray}
\sphericalangle \left( v^{\alpha },w^{\alpha }\right) &<&\tau \left( \delta
,\mu \right)  \notag \\
\sphericalangle \left( P_{x^{\alpha },y^{a}}\left( w^{\alpha }\right) ,%
\tilde{w}^{\alpha }\right) &<&\tau \left( \delta ,\mu \right)
\label{mu
move}
\end{eqnarray}%
and subsequences $\left\{ \gamma _{w^{\alpha }}\right\} _{\alpha =1}^{\infty
}$ and $\left\{ \gamma _{\tilde{w}^{\alpha }}\right\} _{\alpha =1}^{\infty }$
converging to segments $\gamma _{w}$ and $\gamma _{\tilde{w}}$ that are
parameterized on $\left[ 0,\mu \right] .$ Since $|\nabla f_{i,d}^{\alpha
}|\leq 2$, we may assume for a possibly smaller constant $C$ that 
\begin{equation*}
\left\vert D_{w^{\alpha }}f_{i,d}^{\alpha }-D_{\tilde{w}^{\alpha
}}f_{i,d}^{\alpha }\right\vert \geq C\text{.}
\end{equation*}
Thus for some $z^{\alpha }\in B(p_{i}^{\alpha },d)$ with $\mathrm{dist}_{%
\mathrm{Haus}}\left( z^{\alpha },E_{i,x}\right) >2\nu ,$ 
\begin{equation}
\left\vert D_{w^{\alpha }}f_{z^{\alpha }}^{\alpha }-D_{\tilde{w}^{\alpha
}}f_{z^{\alpha }}^{\alpha }\right\vert \geq \frac{C}{2}.  \label{big gap}
\end{equation}

Passing to a subsequence, we have $z^{\alpha }\rightarrow z\in
B(p_{i},d)\setminus E_{i,x}.$ As in the proof of Lemma \ref{Directional
derivative lower bound} (Inequality \ref{derivative convergence}), we have 
\begin{eqnarray*}
\left\vert D_{w^{\alpha }}f_{z^{\alpha }}^{\alpha }-D_{w}f_{z}\right\vert
&<&\tau \left( \delta ,\tau \left( {1/\alpha }|\mu \right) \right) \text{ and%
} \\
\left\vert D_{\tilde{w}^{\alpha }}f_{z^{\alpha }}^{\alpha }-D_{\tilde{w}%
}f_{z}\right\vert &<&\tau \left( \delta ,\tau \left( {1/\alpha }|\mu \right)
\right) .
\end{eqnarray*}%
Thus, 
\begin{eqnarray*}
\left\vert D_{w^{\alpha }}f_{z^{\alpha }}^{\alpha }-D_{\tilde{w}^{\alpha
}}f_{z^{\alpha }}^{\alpha }\right\vert &\leq &\left\vert D_{w^{\alpha
}}f_{z^{\alpha }}^{\alpha }-D_{w}f_{z}\right\vert +|D_{w}f_{z}-D_{\tilde{w}%
}f_{z}|+\left\vert D_{\tilde{w}}f_{z}-D_{\tilde{w}^{\alpha }}f_{z^{\alpha
}}^{\alpha }\right\vert \\
&<&|D_{w}f_{z}-D_{\tilde{w}}f_{z}|+\tau \left( \delta ,\tau \left( {1/\alpha 
}|\mu \right) \right) \\
&\leq &\left\vert D_{w}f_{z}-D_{P_{x,y}\left( w\right) }f_{z}\right\vert
+\left\vert D_{P_{x,y}\left( w\right) }f_{z}-D_{\tilde{w}}f_{z}\right\vert
+\tau \left( \delta ,\tau \left( {1/\alpha }|\mu \right) \right) \\
&\leq &\tau \left( \varepsilon ,\delta ,\mu ,\eta ,\tau \left( {1/\alpha }%
|\mu \right) \right)
\end{eqnarray*}%
by the previous corollary and Inequalities \ref{far from p_i} and \ref{mu
move}. Choosing $\varepsilon ,\delta ,\eta ,$ $\mu ,$ and $1/\alpha $ small
enough, we have a contradiction to \ref{big gap}.
\end{proof}

The previous lemma, together with the definitions of $\Phi _{d}^{\alpha },$ $%
\left( {\varphi ^{\eta }}\right) ^{-1}$ and $P_{x^{\alpha },y^{a}}$
establishes the estimates \ref{away from base} and \ref{at base} and hence
the second part of Key Lemma, completing the proof of Theorem \ref{Cross Cap
Stability}, except in dimension $4.$

\section{Recognizing $\mathbb{R}P^{4}$}

To prove Theorem \ref{Cross Cap Stability} in dimension $4,$ we exploit the
following corollary of the fact that $\mathrm{Diff}_{+}\left( S^{3}\right) $
is connected \cite{Cerf}.

\begin{corollary}
\label{Cerf}Let $M$ be a smooth $4$--manifold obtained by smoothly gluing a $%
4$--disk to the boundary of the nontrivial $1$--disk bundle over $\mathbb{R}%
P^{3}.$ Then $M$ is diffeomorphic to $\mathbb{R}P^{4}.$
\end{corollary}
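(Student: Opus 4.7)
The strategy is to recognize $\mathbb{R}P^4$ itself as an instance of the gluing described in the statement and then to argue, via Cerf's theorem, that the diffeomorphism type of $M$ is independent of the attaching map on $S^3$.

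First I would exhibit $\mathbb{R}P^4$ in the required form. Take the standard projective hyperplane $\mathbb{R}P^3\subset\mathbb{R}P^4$. From the splitting $T\mathbb{R}P^4|_{\mathbb{R}P^3}=T\mathbb{R}P^3\oplus\nu$ and the fact that $\mathbb{R}P^4$ is nonorientable while $\mathbb{R}P^3$ is orientable, the normal line bundle $\nu$ is the unique nontrivial real line bundle over $\mathbb{R}P^3$. So a closed tubular neighborhood $N\subset\mathbb{R}P^4$ of $\mathbb{R}P^3$ is precisely the nontrivial $1$-disk bundle from the statement, with $\partial N$ the nontrivial double cover $S^3$ of $\mathbb{R}P^3$. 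Lifting to the double cover $S^4\to\mathbb{R}P^4$, the complement $\mathbb{R}P^4\setminus\mathrm{int}(N)$ is identified with a closed hemisphere of $S^4$, hence is a standard smooth $4$-disk. Therefore
\begin{equation*}
\mathbb{R}P^4 \;=\; N\cup_{\varphi_0} D^4
\end{equation*}
for some gluing diffeomorphism $\varphi_0:S^3\to\partial N$.

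Now, writing the hypothesized $M$ as $N\cup_{\varphi}D^4$, form the self-diffeomorphism $\psi:=\varphi_0^{-1}\circ\varphi:S^3\to S^3$. By Cerf's theorem, $\pi_0\mathrm{Diff}(S^3)\cong\mathbb{Z}_2$, represented by the identity and by a fixed linear reflection $r$; both extend linearly to self-diffeomorphisms of $D^4$, and the Cerf isotopy between $\psi$ and its linear representative can be performed across a collar of $\partial D^4$ to produce a self-diffeomorphism $\Psi:D^4\to D^4$ restricting to $\psi$ on the boundary. The map equal to $\mathrm{id}_N$ on $N$ and to $\Psi$ on $D^4$ then respects the boundary identifications and descends to the desired diffeomorphism $F:M\to\mathbb{R}P^4$. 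The only real technical point will be ensuring smoothness across $\partial N$, which is handled in the standard way using the matching bicollars supplied by the hypothesis of smooth gluing; no deeper input is required beyond Cerf's theorem and the explicit hyperplane decomposition of $\mathbb{R}P^4$.
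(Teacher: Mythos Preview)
Your proposal is correct and is exactly the argument the paper has in mind: the corollary is stated there without a separate proof, simply as a consequence of Cerf's theorem that $\mathrm{Diff}_{+}(S^{3})$ is connected, and you have spelled out precisely that deduction---exhibiting $\mathbb{R}P^{4}$ as $N\cup D^{4}$ via the tubular neighborhood of the standard $\mathbb{R}P^{3}\subset\mathbb{R}P^{4}$, and then using Cerf's theorem to extend the comparison gluing map over the disk.
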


To see that our $M^{\alpha }$s have this structure, we use standard triangle
comparison and argue as we did in the part of Section \ref{Cross Cap Stab}
titled \textquotedblleft Lower Bound on Differential\textquotedblright\ to
conclude

\begin{proposition}
\label{no crtical}For any fixed $\rho _{0}>0,$ $f_{0,d}^{\alpha }$ does not
have critical points on\linebreak\ $M^{\alpha }\setminus \left\{ B\left(
p_{0}^{\alpha },\rho _{0}\right) \cup B\left( A\left( p_{0}^{\alpha }\right)
,\rho _{0}\right) \right\} ,$ and $\nabla f_{0,d}^{\alpha }$ is
gradient-like for $\mathrm{dist}\left( A\left( p_{0}^{\alpha }\right) ,\cdot
\right) $ and $-\mathrm{dist}\left( p_{0}^{\alpha },\cdot \right) ,$
provided $\alpha $ is sufficiently large and $d$ is sufficiently small.
\end{proposition}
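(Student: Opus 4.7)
The plan is to apply to the single function $f_{0,d}^{\alpha}$ the same contradiction-and-convergence scheme used in the ``Lower Bound on Differential'' subsection to establish Lemma \ref{Directional derivative lower bound}, now with additional requirements beyond non-vanishing of the gradient: definite signs for $D_{\nabla f_{0,d}^{\alpha}}\mathrm{dist}(A(p_{0}^{\alpha}),\cdot)$ and $D_{\nabla f_{0,d}^{\alpha}}(-\mathrm{dist}(p_{0}^{\alpha},\cdot))$. As before, the route is first to verify the analogous statement on the model double disk $\mathbb{D}_{k}^{n}(r)$ and then to transfer it to $\tilde{M}^{\alpha}$ via Gromov--Hausdorff convergence.

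On the model, away from $\mathcal{S}$ and within each half-disk, first variation gives
\begin{equation*}
\nabla f_{0}=h_{k}'(\mathrm{dist}(A(p_{0}),\cdot))\nabla\mathrm{dist}(A(p_{0}),\cdot)-h_{k}'(\mathrm{dist}(p_{0},\cdot))\nabla\mathrm{dist}(p_{0},\cdot).
\end{equation*}
Since $h_{k}'$ has a fixed sign on $(0,2r]$ with $|h_{k}'|$ bounded below on $[\rho_{0}/2,2r]$, and since the angle between $\uparrow_{x}^{p_{0}}$ and $\uparrow_{x}^{A(p_{0})}$ is continuous on $\mathbb{D}_{k}^{n}(r)\setminus\{p_0,A(p_0)\}$ and bounded strictly away from $0$ outside $B(p_{0},\rho_{0}/2)\cup B(A(p_{0}),\rho_{0}/2)$, a direct computation yields a uniform bound $|\nabla f_{0}|\geq 2c(\rho_{0})>0$ together with uniform lower bounds on $|\langle\nabla f_{0},\nabla\mathrm{dist}(A(p_{0}),\cdot)\rangle|$ and $|\langle\nabla f_{0},\nabla\mathrm{dist}(p_{0},\cdot)\rangle|$ with the signs prescribed by $h_{k}'$. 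Across $\mathcal{S}$ the two one-sided gradients project to the same vector in $T\mathcal{S}$, exactly as exploited for the coordinate functions $f_{i}$ in the ``Lower Bound on Differential'' subsection, so the bounds extend continuously.

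The transfer to $\tilde{M}^{\alpha}$ proceeds by contradiction, paralleling the proof of Lemma \ref{Directional derivative lower bound}. If the conclusion failed, there would exist a sequence $x^{\alpha}\in\tilde{M}^{\alpha}\setminus(B(p_{0}^{\alpha},\rho_{0})\cup B(A(p_{0}^{\alpha}),\rho_{0}))$ violating one of the three conditions. Let $x=\lim x^{\alpha}\in\mathbb{D}_{k}^{n}(r)$, which lies outside $B(p_{0},\rho_{0}/2)\cup B(A(p_{0}),\rho_{0}/2)$. Using Corollary \ref{convergence of vectors}, approximate the unit vector $\nabla f_{0,d}^{\alpha}(x^{\alpha})/|\nabla f_{0,d}^{\alpha}(x^{\alpha})|$ (or, in the critical-point case, any witnessing direction) by $w^{\alpha}\in\Sigma_{x^{\alpha}}^{\mu}$ with $\gamma_{w^{\alpha}}\to\gamma_{w}$. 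The Mean Value Theorem applied to the integral (\ref{bar--f--alpha}) produces $z^{\alpha}\in B(p_{0}^{\alpha},d)$ with $D_{w^{\alpha}}f_{z^{\alpha}}^{\alpha}=D_{w^{\alpha}}f_{0,d}^{\alpha}$, and Lemma \ref{angle convergence} then forces all relevant directional derivatives (both those of $f_{0,d}^{\alpha}$ and those of $\mathrm{dist}(p_{0}^{\alpha},\cdot), \mathrm{dist}(A(p_{0}^{\alpha}),\cdot)$ along $\nabla f_{0,d}^{\alpha}$) to converge to their counterparts at $x\in\mathbb{D}_{k}^{n}(r)$, contradicting the model bounds.

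The main obstacle is that the distance functions from $p_{0}^{\alpha}$ and $A(p_{0}^{\alpha})$ need not be differentiable on their cut loci, so first variation must be applied at reference points whose minimizing segments to $x^{\alpha}$ are stable under the limit. This is precisely the role of Lemma \ref{measure convergence}: since $d>0$, one may replace $p_{0}^{\alpha}$ by a $z^{\alpha}\in B(p_{0}^{\alpha},d)\setminus E_{0,x}$ (and analogously for $A(p_{0}^{\alpha})$), where $E_{0,x}$ has measure at most $\varepsilon$, so that the averages defining $f_{0,d}^{\alpha}$ incur only a $\tau$-error that is absorbed into the convergence in the previous paragraph.
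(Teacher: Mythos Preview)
Your argument is correct and is exactly what the paper intends: it gives no separate proof, merely pointing to ``standard triangle comparison'' together with the contradiction-and-convergence scheme of the \emph{Lower Bound on Differential} subsection, which is precisely what you carry out. One small remark: your final paragraph is unnecessary---in the model the cut locus of $p_{0}$ is $\{A(p_{0})\}$ and vice versa, so segments are unique on the relevant region, and on $\tilde{M}^{\alpha}$ first variation defines $D_{v}\,\mathrm{dist}(A(p_{0}^{\alpha}),\cdot)$ everywhere while every segment $x^{\alpha}A(p_{0}^{\alpha})$ limits to the unique model segment; hence Lemma~\ref{measure convergence} is not needed here.
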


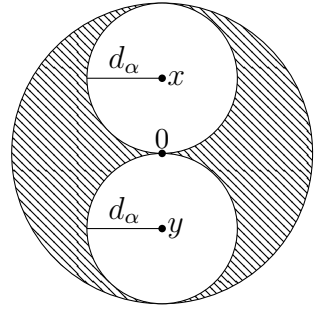
\begin{wrapfigure}{r}{0.5\textwidth}\centering
\vspace{-10pt}
\begin{tikzpicture}[ dot/.style={fill=black,circle,minimum size=1mm}]
\draw[pattern = north west lines] (0,0)circle (2cm);
\draw[fill = white] (0,1cm) circle (1cm);
\draw[fill = white] (0,-1cm) circle (1cm);
\draw (0,0) node[dot]{};
\draw (0,0) node[above]{\small$0$};
\draw (0,1cm) node[dot]{};
\draw (0,1cm) node[right]{$x$};
\draw (0,-1cm) node[dot]{};
\draw (0,-1cm) node[right]{$y$};
\draw (0,1cm) --node[midway,above]{$d_{\alpha}$} (-1cm,1cm);
\draw (0,-1cm) --node[midway,above]{$d_{\alpha}$} (-1cm,-1cm);
\end{tikzpicture}
\caption{The model $\mathcal{D}_{k}^{n}\left(2d_{\alpha
}\right)$.}
\vspace{-20pt}
\end{wrapfigure}

Finally, using Swiss Cheese Volume Comparison (see 1.1 in \cite{GrovPet3})
we will show

\begin{proposition}
\label{swiss cheese}There is a $\rho _{0}>0$ so that $\mathrm{dist}\left(
p_{0}^{\alpha },\cdot \right) $ does not have critical points in $B\left(
p_{0}^{\alpha },\rho_{0}\right),$ provided $\alpha $ is sufficiently large.
\end{proposition}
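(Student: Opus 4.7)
My plan is to argue by contradiction. Suppose the conclusion fails; then after passing to a subsequence we may select critical points $x^\alpha$ of $\mathrm{dist}(p_0^\alpha,\cdot)$ with $R_\alpha := \mathrm{dist}(p_0^\alpha, x^\alpha) \to 0$. The strategy is to combine two ingredients: (i) the noncollapsing convergence forces the volume of small balls around $p_0^\alpha$ to match the model almost exactly, and (ii) the existence of a critical point forces (via Swiss Cheese Volume Comparison) a definite, scale-invariant volume deficit at the scale $R_\alpha$. These two facts are incompatible.

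First I would establish the volume matching. Since $p_0$ lies in the smooth interior of $\mathcal{D}_{k}^{n}(r)^{+}\subset \mathbb{D}_{k}^{n}(r)$, where the local geometry is exactly that of the model space $M_{k}^{n}$, the noncollapsing convergence $\tilde{M}^\alpha \to \mathbb{D}_{k}^{n}(r)$ provided by Theorem \ref{covers converge}, combined with Bishop--Gromov monotonicity, yields
\begin{equation*}
\frac{\mathrm{vol}(B(p_0^\alpha, s))}{\mathrm{vol}_{M_{k}^{n}}(s)} \longrightarrow 1
\end{equation*}
uniformly for $s\in(0, r_0]$, where $r_0>0$ is chosen so that $B(p_0, r_0)\subset\mathcal{D}_{k}^{n}(r)^{+}$ lies in the smooth interior. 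In particular, for $\alpha$ large the ratio is close to $1$ at the scale $s=R_\alpha$ as well.

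Second, I would exploit the criticality of $x^\alpha$. The condition that every direction at $x^\alpha$ makes angle at most $\pi/2$ with some $\uparrow_{x^\alpha}^{p_0^\alpha}$ gives, via Toponogov hinge comparison with reference curvature $k$, the sharp containment $B(x^\alpha, R_\alpha/2)\subset B(p_0^\alpha, R_\alpha\sqrt{5}/2 + o(R_\alpha))$, strictly smaller than the naive triangle-inequality radius $3R_\alpha/2$. In the model $M_{k}^{n}$, by contrast, the analogous ball $\tilde{B}(\tilde{x}, R_\alpha/2)$ with $\mathrm{dist}(\tilde{p}_0, \tilde{x}) = R_\alpha$ reaches all the way out to $3R_\alpha/2$. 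Applying the Swiss Cheese Volume Comparison (1.1 in \cite{GrovPet3}) to the disjoint sub-balls $B(p_0^\alpha, R_\alpha/2)$ and $B(x^\alpha, R_\alpha/2)$ sitting inside $B(p_0^\alpha, R_\alpha\sqrt{5}/2)$, one reads off a scale-invariant deficit
\begin{equation*}
\mathrm{vol}(B(p_0^\alpha, R_\alpha\sqrt{5}/2)) \leq (1-c)\,\mathrm{vol}_{M_{k}^{n}}(R_\alpha\sqrt{5}/2),
\end{equation*}
where $c>0$ depends only on $n$ and $k$. This contradicts the uniform limit in the first step, completing the proof.

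The main obstacle is the last step: translating the geometric sharpening supplied by criticality (the improved containment) into a definite volume deficit. One has to be careful that the model Swiss-cheese configuration (in which the ball around $\tilde{x}$ genuinely pokes out of the outer ball) really gives a loss of volume that is a constant fraction of $\mathrm{vol}_{M_{k}^{n}}(R_\alpha\sqrt{5}/2)$, and not a fraction that vanishes with $R_\alpha$. Because $R_\alpha\to 0$ makes the comparison geometry asymptotically Euclidean, this reduces to a single scale-invariant computation in $\R^n$, which makes it tractable but does require careful accounting of the three relevant model-ball volumes.
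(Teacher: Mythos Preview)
Your overall strategy---contradiction, extract a scale--invariant volume deficit from criticality, and play it against almost--maximal volume---is exactly the paper's, and your step (i) is correct: Bishop--Gromov monotonicity plus volume convergence at one fixed radius $r_0$ does force $\mathrm{vol}\,B(p_0^\alpha,s)/\mathrm{vol}\,\mathcal{D}_k^n(s)\to 1$ uniformly on $(0,r_0]$.

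The gap is in step (ii). Applying Swiss Cheese to the pair $B(p_0^\alpha,R_\alpha/2)$, $B(x^\alpha,R_\alpha/2)$ inside $B(p_0^\alpha,\rho)$ with $\rho\approx\tfrac{\sqrt5}{2}R_\alpha$ does \emph{not} produce the deficit you claim. Swiss Cheese bounds the complement volume from above by the model complement, in which the ball around $\tilde x$ pokes out of $\tilde B(\rho)$ and hence contributes only the \emph{smaller} quantity $\mathrm{vol}\bigl(\tilde B(\tilde x,R_\alpha/2)\cap\tilde B(\rho)\bigr)$. Rearranging and inserting the Bishop upper bounds (or your step (i) almost--equalities) for the two inner balls, one obtains
\[
\mathrm{vol}\,B(p_0^\alpha,\rho)\ \le\ \mathrm{vol}\,\mathcal{D}_k^n(\rho)\ +\ \bigl[\mathrm{vol}\,\mathcal{D}_k^n(R_\alpha/2)-\mathrm{vol}\bigl(\tilde B(\tilde x,R_\alpha/2)\cap\tilde B(\rho)\bigr)\bigr],
\]
and the bracket is \emph{positive}, not negative. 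The Euclidean check you allude to confirms this: with $\rho=\tfrac{\sqrt5}{2}$ and inner radii $\tfrac12$, the inequality ``manifold complement $\le$ model complement'' becomes $(\sqrt5/2)^n-2(1/2)^n\le(\sqrt5/2)^n-(1/2)^n-V_\cap$, i.e.\ $V_\cap\le(1/2)^n$, which always holds. No contradiction arises. More crudely, your containment only yields that two balls of radius $R_\alpha/2$ with nearly full model volume sit disjointly inside a ball of radius $\tfrac{\sqrt5}{2}R_\alpha$; since $2(1/2)^n\le(\sqrt5/2)^n$ for all $n\ge 1$, this is compatible with all balls having full model volume.

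What the paper does instead is to center the comparison at the \emph{critical} point $q_\alpha$ (your $x^\alpha$) rather than at $p_0^\alpha$, and then invoke 1.4 of \cite{GrovPet3}: criticality forces the single removed ball $B(p_0^\alpha,d_\alpha)$ to behave, in the model, like \emph{two} antipodal balls of radius $d_\alpha$ on $\partial\mathcal{D}_k^n(d_\alpha)$. That doubling is what yields
\[
\mathrm{vol}\,B(q_\alpha,2d_\alpha)\ \le\ \mathrm{vol}\,\mathcal{D}_k^n(2d_\alpha)-\mathrm{vol}\,\mathcal{D}_k^n(d_\alpha)\ <\ \kappa\cdot\mathrm{vol}\,\mathcal{D}_k^n(2d_\alpha)
\]
for a fixed $\kappa<1$, which Bishop--Gromov then propagates to scale $r$ (playing the role of your step (i)). Your $\tfrac{\sqrt5}{2}$ containment is a true consequence of criticality, but it is too weak a shadow of the full ``antipodal doubling'' to force a deficit.
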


\begin{proof}
Since $\mathrm{vol}$\thinspace \thinspace $M^{\alpha }\rightarrow \mathrm{vol%
}$\thinspace \thinspace $\mathcal{D}_{k}^{n}\left( r\right) ,$ $\mathrm{vol}$%
\thinspace $B\left( p_{0}^{\alpha },r\right) \rightarrow \mathrm{vol}$%
\thinspace \thinspace $\mathcal{D}_{k}^{n}\left( r\right) .$ Via Swiss
Cheese Volume Comparison (see 1.1 in \cite{GrovPet3}) we shall see that the
presence of a critical point close to $p_{0}^{\alpha }$ contradicts $\mathrm{%
vol}$\thinspace $B\left( p_{0}^{\alpha },r\right) \rightarrow \mathrm{vol}$%
\thinspace \thinspace $\mathcal{D}_{k}^{n}\left( r\right) .$ Suppose $%
q_{\alpha }$ is critical for $\mathrm{dist}\left( p_{0}^{\alpha },\cdot
\right) ,$ and $\mathrm{dist}\left( p_{0}^{\alpha },q_{\alpha }\right)
=d_{\alpha }\rightarrow 0$. Let $x,y$ be points in $\partial \mathcal{D}%
_{k}^{n}\left( d_{\alpha }\right) $ at maximal distance. By Swiss Cheese
Comparison and 1.4 in \cite{GrovPet3}, 
\begin{eqnarray*}
\mathrm{vol}\text{\thinspace }\left( B\left( q_{\alpha },2d_{\alpha }\right)
\setminus B\left( p_{0}^{\alpha },d_{\alpha }\right) \right) &\leq &\mathrm{%
vol}\text{\thinspace \thinspace }\left( \mathcal{D}_{k}^{n}\left( 2d_{\alpha
}\right) \setminus \left\{ B\left( x,d_{\alpha }\right) \cup B\left(
y,d_{\alpha }\right) \right\} \right) \\
&=&\mathrm{vol}\text{\thinspace \thinspace }\left( \mathcal{D}_{k}^{n}\left(
2d_{\alpha }\right) \right) -2\mathrm{vol}\text{\thinspace \thinspace }%
\left( \mathcal{D}_{k}^{n}\left( d_{\alpha }\right) \right) .
\end{eqnarray*}%
Since 
\begin{equation*}
\mathrm{vol}\text{\thinspace }B\left( p_{0}^{\alpha },d_{\alpha }\right)
\leq \mathrm{vol}\text{\thinspace \thinspace }\mathcal{D}_{k}^{n}\left(
d_{\alpha }\right) ,
\end{equation*}%
we conclude%
\begin{eqnarray*}
\mathrm{vol}\text{\thinspace }\left( B\left( q_{\alpha },2d_{\alpha }\right)
\right) &\leq &\mathrm{vol}\text{\thinspace \thinspace }\left( \mathcal{D}%
_{k}^{n}\left( 2d_{\alpha }\right) \right) -\mathrm{vol}\text{\thinspace
\thinspace }\left( \mathcal{D}_{k}^{n}\left( d_{\alpha }\right) \right) \\
&<&\kappa \cdot \mathrm{vol}\text{\thinspace \thinspace }\mathcal{D}%
_{k}^{n}\left( 2d_{\alpha }\right)
\end{eqnarray*}%
for some $\kappa \in \left( 0,1\right) .$ By relative volume comparison for $%
\rho \geq 2d_{\alpha },$ 
\begin{equation*}
\kappa >\frac{\mathrm{vol}\text{\thinspace }B\left( q_{\alpha },2d_{\alpha
}\right) }{\mathrm{vol}\text{\thinspace \thinspace }\mathcal{D}%
_{k}^{n}\left( 2d_{\alpha }\right) }\geq \frac{\mathrm{vol}\text{\thinspace }%
B\left( q_{\alpha },\rho \right) }{\mathrm{vol}\text{\thinspace \thinspace }%
\mathcal{D}_{k}^{n}\left( \rho \right) }
\end{equation*}%
or 
\begin{equation*}
\kappa \cdot \mathrm{vol}\text{\thinspace \thinspace }\mathcal{D}%
_{k}^{n}\left( \rho \right) >\mathrm{vol}\text{\thinspace }B\left( q_{\alpha
},\rho \right) .
\end{equation*}%
Since 
\begin{eqnarray*}
B\left( p_{0}^{\alpha },r\right) &\subset &B\left( q_{\alpha },r+d_{\alpha
}\right) , \\
\mathrm{vol}\text{\thinspace }B\left( p_{0}^{\alpha },r\right) &<&\kappa
\cdot \mathrm{vol}\text{\thinspace \thinspace }\mathcal{D}_{k}^{n}\left(
r+d_{\alpha }\right) .
\end{eqnarray*}%
Letting $d_{\alpha }\rightarrow 0,$ we conclude that 
\begin{equation*}
\mathrm{vol}\text{\thinspace }B\left( p_{0}^{\alpha },r\right) <\kappa \cdot 
\mathrm{vol}\text{\thinspace \thinspace }\mathcal{D}_{k}^{n}\left( r\right) ,
\end{equation*}%
a contradiction.
\end{proof}

An identical argument shows

\begin{proposition}
There is a $\rho _{0}>0$ so that $\mathrm{dist}\left( A\left( p_{0}^{\alpha
}\right) ,\cdot \right) $ does not have critical points in $B\left( A\left(
p_{0}^{\alpha }\right) ,\rho \right) ,$ provided $\alpha $ is sufficiently
large.
\end{proposition}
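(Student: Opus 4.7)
The plan is to reduce directly to Proposition~\ref{swiss cheese} via the $\mathbb{Z}_{2}$--symmetry of the set-up. The deck transformation $A : \tilde{M}^{\alpha} \to \tilde{M}^{\alpha}$ is an isometry that sends $p_{0}^{\alpha}$ to $A(p_{0}^{\alpha})$, and therefore restricts to an isometry $B(A(p_{0}^{\alpha}), \rho_{0}) \to B(p_{0}^{\alpha}, \rho_{0})$ that intertwines $\mathrm{dist}(A(p_{0}^{\alpha}), \cdot)$ with $\mathrm{dist}(p_{0}^{\alpha}, \cdot)$. Because criticality of a distance function is an isometry invariant, a point $q \in B(A(p_{0}^{\alpha}), \rho_{0})$ is critical for $\mathrm{dist}(A(p_{0}^{\alpha}), \cdot)$ if and only if $A(q) \in B(p_{0}^{\alpha}, \rho_{0})$ is critical for $\mathrm{dist}(p_{0}^{\alpha}, \cdot)$. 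Taking $\rho_{0}$ to be the constant supplied by Proposition~\ref{swiss cheese} then delivers the claim for all sufficiently large $\alpha$.

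Alternatively, if one wishes to produce the direct Swiss Cheese proof alluded to by the phrase ``an identical argument,'' then I would re-run the proof of Proposition~\ref{swiss cheese} verbatim with $A(p_{0}^{\alpha})$ in place of $p_{0}^{\alpha}$. The only base-point-dependent ingredient is the volume convergence $\mathrm{vol}\, B(p_{0}^{\alpha}, r) \to \mathrm{vol}\,\mathcal{D}_{k}^{n}(r)$. This transfers to $A(p_{0}^{\alpha})$ because $A$ is a volume-preserving isometry; equivalently, under the pointed convergence of Theorem~\ref{covers converge} one has $A(p_{0}^{\alpha}) \to A(p_{0}) = -e_{0}$, and the closed ball $\overline{B(-e_{0}, r)} \subset \mathbb{D}_{k}^{n}(r)$ coincides with $\mathcal{D}_{k}^{n}(r)^{-}$ by construction, giving
\[
\mathrm{vol}\, B(A(p_{0}^{\alpha}), r) \;=\; \mathrm{vol}\, B(p_{0}^{\alpha}, r) \;\longrightarrow\; \mathrm{vol}\,\mathcal{D}_{k}^{n}(r).
\]
Every other ingredient --- the antipodal pair $x, y \in \partial \mathcal{D}_{k}^{n}(d_{\alpha})$ at maximal distance, the Swiss Cheese bound $\mathrm{vol}\, B(q_{\alpha}, 2d_{\alpha}) < \kappa \cdot \mathrm{vol}\,\mathcal{D}_{k}^{n}(2d_{\alpha})$, and the propagation via relative volume comparison out to radius $r + d_{\alpha}$ --- is intrinsic to the model $\mathcal{D}_{k}^{n}$ and independent of which of the two cone points is chosen. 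Consequently no obstacle arises: the symmetry either obviates the repetition outright or, at worst, reduces it to a mechanical substitution.
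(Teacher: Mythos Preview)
Your proposal is correct. The paper's own justification is simply ``An identical argument shows,'' i.e.\ re-running the Swiss Cheese proof with $A(p_{0}^{\alpha})$ in place of $p_{0}^{\alpha}$, which is exactly your second paragraph; your first paragraph's $\mathbb{Z}_{2}$--symmetry reduction is a valid and slightly slicker shortcut that the paper does not spell out.
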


Combining the previous three propositions, we see that $(f_{0,d}^{\alpha
})^{-1}\left( 0\right) $ is diffeomorphic to $S^{3}.$ By Geometrization, $%
(f_{0,d}^{\alpha })^{-1}\left( 0\right) /\left\{ \mathrm{id},A\right\} $ is
diffeomorphic to $\mathbb{R}P^{3}.$ If $\rho _{0}$ is as in Proposition \ref%
{no crtical}, it follows that $(f_{0,d}^{\alpha })^{-1}(\left[ -\rho
_{0},\rho _{0}\right] )/\left\{ \mathrm{id},A\right\} $ is the nontrivial $1$%
--disk bundle over $\mathbb{R}P^{3}.$ $\tilde{M}^{\alpha }\setminus
(f_{0,d}^{\alpha })^{-1}(\left[ -\rho _{0},\rho _{0}\right] )$ consists of
two smooth $4$--disks that get interchanged by $A.$ Thus $M^{\alpha }$ has
the structure of Corollary \ref{Cerf} and is hence diffeomorphic to $\mathbb{%
R}P^{4}.$

\begin{remark}
The proof of Perelman's Parameterized Stability Theorem \cite{Kap} can
substitute for Geometrization to allow us to conclude that $f^{-1}\left(
0\right) /\left\{ \mathrm{id},A\right\} $ is homeomorphic and therefore
diffeomorphic to $\mathbb{R}P^{3}.$ The need to cite the proof rather than
the theorem stems from the fact that the definition of admissible functions
in \cite{Kap} excludes $f_{0,d}^{\alpha }$ . It is straightforward (but
tedious) to see that the proof goes through for an abstract class that
includes $f_{0,d}^{\alpha }.$

The fact that $\mathbb{R}P^{4}$ admits exotic differential structures can be
seen by combining \cite{HKT} with either \cite{CS} or \cite{FintSt}.
\end{remark}

\medskip

\end{document}